\newtheorem{thm}{Theorem}
\numberwithin{thm}{section}
\newtheorem{mydef}[thm]{Definition}
\newtheorem{prop}[thm]{Proposition}
\newtheorem{lemma}[thm]{Lemma}
\newtheorem{ques}[thm]{Question}
\newtheorem{eg}[thm]{Example}
\newtheorem{conj}[thm]{Conjecture}
\newtheorem{remark}[thm]{Remark}
\newtheorem*{claim}{Claim}
\numberwithin{equation}{section}
\newcommand{\Z}{\mathbb{Z}}
\newcommand{\Q}{\mathbb{Q}}
\newcommand{\spin}{\ifmmode{\rm Spin}\else{${\rm spin}$\ }\fi}
\newcommand{\spinc}{\ifmmode{{\rm Spin}^c}\else{${\rm spin}^c$}\fi}
\newcommand{\spinct}{\mathfrak t}
\newcommand{\spincs}{\mathfrak s}
\newcommand{\Char}{{\rm Char}}
\newcommand{\norm}[1]{\lVert #1 \rVert^2}
\newcommand{\lone}[1]{\lVert #1 \rVert_1}
\newcommand{\wt}[1]{\widetilde{#1}}
\begin{document}

\title[Non-integer Seifert fibered surgeries]{The realization problem for non-integer Seifert fibered surgeries}

\author{Ahmad Issa}
\address{Department of Mathematics \\
         University of British Columbia \\
         Vancouver, BC \\
         Canada}
\email{aissa@math.ubc.ca}

\author{Duncan McCoy}
\address{D\'{e}partment de Math\'{e}matiques\\
  Universit\'{e} du Qu\'{e}bec \`{a} Montr\'{e}al\\
  Montr\'{e}al QC\\
  Canada}
\email{mc\_coy.duncan@uqam.ca}

\begin{abstract}
Conjecturally, the only knots in $S^3$ with non-integer surgeries producing Seifert fibered spaces are torus knots and cables of torus knots. In this paper, we make progress on the associated realization problem. Let $Y$ be a small Seifert fibered space arising by $p/q$-surgery on a knot in $S^3$, where $p/q$ is positive and a non-integer. Let $e$ denote the weight of the central vertex in the minimal star-shaped plumbing that $Y$ bounds. We show that if $e\leq -2$ or $e\geq 3$, then $Y$ can be obtained by $p/q$-surgery on a torus knot or a cable of a torus knot.
\end{abstract}
\maketitle
\section{Introduction}\label{sec:intro}
One of the simplest operations to produce new 3-manifolds is Dehn surgery on a knot $K$ in $S^3$. Thus, it is natural to consider how certain 3-manifolds may arise by surgery on a knot in $S^3$. It is, of course, well known that every closed oriented 3-manifold arises by surgery on a {\em link} in $S^3$ \cite{Lickorish62dehnsurgery, wallace60modifications}. One naturally arising family of 3-manifolds that might be considered for such questions are the Seifert fibered spaces.
\begin{ques}\label{ques}
Which Seifert fibered spaces can arise by surgery on a knot in $S^3$?
\end{ques}
As Seifert fibered spaces are not hyperbolic 3-manifolds, this is naturally related to the problem of understanding exceptional surgeries on hyperbolic knots in $S^3$. One conjecture is the following, which explains why one might consider integer and non-integer Seifert fibered surgeries separately.

\begin{conj}\cite[Conjecture~4.8]{gordon1998dehn}\label{conj:hyp_vers}
If $S^3_{p/q}(K)$ is a Seifert fibered space and $K$ is a hyperbolic knot, then $q=1$.
\end{conj}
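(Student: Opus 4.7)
The statement is a long-standing open conjecture, so what follows is an outline of a strategy rather than a plausible complete proof. The plan is to combine Heegaard Floer constraints with realization-type results of the kind developed in the present paper, splitting into cases according to whether $S^3_{p/q}(K)$ is an L-space or not.

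In the L-space case, $K$ must itself be an L-space knot by work of Ozsv\'ath and Szab\'o, so its Alexander polynomial has coefficients in $\{-1,0,1\}$ and is determined by the correction terms of the target. Since the correction terms of a Seifert fibered L-space are computable from its plumbing, one obtains strong numerical restrictions on both $K$ and on the slope $p/q$. I would then apply a realization argument of the type used in this paper to force the pair $(K,p/q)$ to arise as a surgery on a torus knot or on a cable of a torus knot; as neither of these classes consists of hyperbolic knots, this would contradict the hypothesis unless $q=1$.

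For the non-L-space case, where the target Seifert fibered space need not be an L-space --- for instance when the base orbifold has positive genus, or when the Seifert invariants violate the L-space inequality --- I would appeal to $d$-invariant obstructions across all \spinc structures, to Culler-Shalen character variety techniques, and to Casson invariant bounds on non-integer slopes, together with finiteness results for exceptional surgeries on hyperbolic knots. The main obstacle, and the reason the conjecture remains open, is precisely this non-L-space regime: Heegaard Floer methods become much blunter, and no present technique uniformly rules out a hyperbolic knot admitting a non-integer Seifert fibered surgery when the base orbifold has positive genus or when the central weight is small. The present paper sidesteps this difficulty by restricting to small Seifert fibered spaces bounding positive definite plumbings with central weight $e\geq 2$ or $e\geq 3$; removing these hypotheses, and in particular covering the negative definite and higher-genus base cases, is where the deepest remaining work lies.
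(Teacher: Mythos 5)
You have not proven anything, and neither does the paper: the statement you were given is an open conjecture of Gordon, quoted verbatim (it is Conjecture~4.8 of the cited paper), and the present article never claims a proof of it. What the paper actually establishes about it is much weaker: the equivalence with Conjecture~\ref{conj:non_hyp_vers} (via Lemma~\ref{lem:increasing_q}), the case $q\geq 9$ (Proposition~\ref{prop:q>9_case}, using Lackenby--Meyerhoff), and the realization results of Theorem~\ref{thm:nonint}. So your honest admission that you are only outlining a strategy is the right call, but the outline itself has concrete defects beyond incompleteness.

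First, your $L$-space step does not do what you want even where the tools apply. Realization arguments of the kind in this paper (and in Greene's and Gibbons' changemaker framework) conclude only that the surgered manifold $Y$ is \emph{also} obtained by the same slope surgery on some torus knot or cable $K'$ with $\Delta_K=\Delta_{K'}$; they say nothing about the original knot $K$, and in particular they cannot contradict the hyperbolicity of $K$. Deducing $K=K'$ (or even that $K$ is not hyperbolic) from such data would require a statement on the level of the cyclic surgery theorem or a knot-detection result, which is precisely what is missing; this is why the paper is framed as a realization problem rather than an attack on the conjecture. Second, your description of the hard non-$L$-space regime is off target: for non-integer slopes, Seifert fibered spaces with positive-genus base (and more generally Haken ones) are already excluded by Boyer--Zhang and Gordon--Luecke, as recalled in the introduction, so the genuinely open territory consists of small Seifert fibered spaces over $S^2$ with three exceptional fibers, specifically the cases left open in the paper's table ($e=1$ for either sign of the slope, and $e=2$ with $p/q>0$), not higher-genus bases. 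The appeals to character varieties and Casson invariants are not developed enough to assess, and as stated they fill none of these gaps.
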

This has an equivalent formulation which provides a conjectural list of knots in $S^3$ with non-integer Seifert fibered surgeries; see Proposition~\ref{prop:conjectures_equiv} for a proof of the equivalence.
\begin{conj}\label{conj:non_hyp_vers}
If $S^3_{p/q}(K)$ is a Seifert fibered space and $q\geq 2$, then $K$ is a torus knot or a cable of a torus knot.
\end{conj}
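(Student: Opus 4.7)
The plan is to deduce Conjecture~\ref{conj:non_hyp_vers} from Conjecture~\ref{conj:hyp_vers} via the standard trichotomy of knots in $S^3$, so the substantive content lies in classifying the satellite case. Suppose $S^3_{p/q}(K)$ is a Seifert fibered space with $q \geq 2$. By Thurston's geometrization, $K$ is a torus knot, a hyperbolic knot, or a satellite knot. Conjecture~\ref{conj:hyp_vers} immediately rules out the hyperbolic case, since it would force $q=1$. If $K$ is a torus knot there is nothing to prove, so the real work is concentrated in the satellite case.

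For the satellite case I would invoke the existing classification of exceptional Dehn surgeries on satellite knots. If $K$ is embedded in a solid torus neighborhood $V$ of a non-trivial companion $K'$, then $\partial V$ is an essential torus in the exterior of $K$; for $S^3_{p/q}(K)$ to be (small) Seifert fibered the surface $\partial V$ must either become compressible after the surgery or become a fiber of the resulting Seifert fibration. Combining Scharlemann's cabling theorem with the results of Gordon and Miyazaki--Motegi on satellite knots admitting Seifert fibered surgeries forces $K$ to be a cable $C_{a,b}(K')$ with surgery slope exactly $p/q=ab$. The classical cable-surgery formula then identifies $S^3_{ab}(C_{a,b}(K'))$ as a Seifert fibered space built from a Dehn filling of the exterior of $K'$, so $K'$ must itself admit a Seifert fibered surgery; inducting on the depth of the satellite tower and reapplying Conjecture~\ref{conj:hyp_vers} to each companion in turn, one concludes that the innermost companion is a torus knot, and hence $K$ is a cable of a torus knot as required.

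The main obstacle is of course Conjecture~\ref{conj:hyp_vers} itself, which remains widely open and is not attacked by the reduction above. A direct assault would require ruling out all non-integer exceptional surgeries on hyperbolic knots, and no uniform technique is known; the best general bounds come from exceptional surgery theory (6-theorems, character variety methods) combined with Heegaard Floer obstructions. This is precisely why the present paper pursues the weaker realization problem, fixing additional data of the Seifert fibered target $Y$ (the central plumbing weight $e$ and the sign of $p/q$) and extracting strong numerical constraints on $(p,q)$ from $d$-invariants of a positive definite plumbing of $Y$. The equivalence of the two conjectures is therefore a soft reduction using classical $3$-manifold topology, whereas the hard mathematics lives in the hyperbolic case, either in full or, as in the main theorem, under plumbing hypotheses on the target.
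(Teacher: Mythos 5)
Your overall framing is the same as the paper's: the statement is a conjecture, and what the paper actually proves (Proposition~\ref{prop:conjectures_equiv}, via Lemma~\ref{lem:increasing_q}) is exactly the conditional reduction you describe — assume Conjecture~\ref{conj:hyp_vers}, invoke the geometrization trichotomy, and dispose of the satellite case by showing the companion would carry a non-integer Seifert fibered surgery. So the architecture of your argument is right, and your closing remarks correctly locate the open content in the hyperbolic case.

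However, your treatment of the satellite case contains a genuine error. You assert that the classification of Seifert fibered surgeries on satellites forces $K=C_{a,b}(K')$ with surgery slope exactly $p/q=ab$. This is impossible under the standing hypothesis $q\geq 2$, since $ab$ is an integer; worse, surgery along the cabling slope of a cable knot is always reducible (it contains a lens space summand), so it can never be the small Seifert fibered space in question. The correct mechanism, and the one the paper uses, is different: take an \emph{innermost} incompressible torus $R$ in the exterior of $K$, note that $S^3_{p/q}(K)$ is a small Seifert fibered space and hence irreducible and atoroidal, so $R$ must compress after surgery; then the pattern knot $C\subset S^1\times D^2$ admits a nontrivial solid torus surgery, and Gabai's theorem says $C$ is a torus knot or a $1$-bridge braid in the solid torus. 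The $1$-bridge braids admit only \emph{integer} solid torus surgeries, so $q\geq 2$ rules them out and $K$ is a cable of $K'$; Gordon's cable formula then gives $S^3_{p/q}(K)\cong S^3_{p/q'}(K')$ with $q'=qw^2\geq q\geq 2$, where $w\geq 2$ is the winding number. Preserving non-integrality of the companion slope is precisely what allows Conjecture~\ref{conj:hyp_vers} to be applied to $K'$ (which, by the innermost choice of $R$, is already hyperbolic or a torus knot, so no induction on satellite depth is needed). With your slope bookkeeping the companion surgery would be integral, Conjecture~\ref{conj:hyp_vers} would give no contradiction, and the induction you propose would not close.
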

In this paper we consider Question~\ref{ques} for non-integer surgeries and show that for a significant subset of the Seifert fibered spaces the only ones arising by non-integer surgery on a knot in $S^3$ are the ones predicted by Conjecture~\ref{conj:non_hyp_vers}.

Culler-Gordon-Luecke-Shalen's cyclic surgery theorem shows that lens spaces arise by non-integer surgery only on torus knots \cite{cglscyclic}. Boyer and Zhang have shown that Haken Seifert fibered spaces can arise only by integer surgeries on knots in $S^3$ \cite[Corollary~J]{Boyer94exceptional}, a fact that also follows from later work of Gordon and Luecke \cite{Gordon04toroidal}. Thus it remains to consider non-integer surgeries yielding small Seifert fibered spaces, that is spaces that fiber over $S^2$ with three exceptional fibers.  We use $Y\cong S^2(e;\frac{p_1}{q_1}, \frac{p_2}{q_2}, \frac{p_3}{q_3})$ to denote the Seifert fibered space obtained according to the surgery diagram in Figure~\ref{fig:sfs_surgery_diagram}. If $Y$ is a rational homology sphere, then it arises as the boundary of a definite manifold obtained by plumbing sphere bundles according to a star-shaped graph. We define $e(Y)\in \Z \setminus \{0\}$ to be the weight of the central vertex of the unique minimal definite plumbing which $Y$ bounds; see Section~\ref{subsec:mindef}.
\begin{restatable}{thm}{thmnonint}
\label{thm:nonint}
Let $Y$ be a Seifert fibered space over $S^2$ with three exceptional fibers and $e(Y)\not\in \{+1, +2, -1\}$. If there is a knot $K$ in $S^3$ with $Y\cong S_{p/q}^3(K)$ where $p/q>0$ and $p/q \in \Q \setminus \Z$, then there is a knot $K'$ which is either a torus knot or a cable of a torus knot with $S_{p/q}^3(K')\cong Y$ and $\Delta_K(t)=\Delta_{K'}(t)$.
\end{restatable}

It turns out that the spaces arising in the conclusion of Theorem~\ref{thm:nonint} are all $L$-spaces. Thus, the fact that $K$ and $K'$ have the same Alexander polynomial shows that they have isomorphic knot Floer homology groups \cite{Ozsvath05Lensspace}. In order to make full use of Theorem~\ref{thm:nonint}, one also needs to understand for which surgeries on torus knots or cables of torus knots we have $e(Y)\not\in \{+1, +2, -1\}$. Thus, we provide the following result as a companion to Theorem~\ref{thm:nonint}.
\begin{restatable}{prop}{ecalc}
\label{prop:e_calc}
Let $K$ be a torus knot or a cable of a torus knot. Then for $p/q>0$ we have that $S_{p/q}^3(K)$ is a Seifert fibered space over $S^2$ with three exceptional fibers and $e(S_{p/q}^3(K))\not\in \{-1,+1,+2\}$ if and only if
\begin{enumerate}[(i)]
\item $K$ is a torus knot $K=T_{r,s}$ with $r,s>1$, $p/q>rs-1$ and $|p-rsq|>1$ or
\item $K$ is a cable of a torus knot $K=C_{a,b} \circ T_{r,s}$, where $r,s>1$, $b/a>rs-1$ and $p/q=ab \pm 1/q$
\end{enumerate}
\end{restatable}

\begin{figure}
  \centerline{
    \begin{overpic}[width=0.5\textwidth]{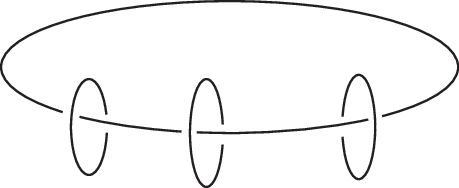}
      \put (45,35) {\large $e$}
      \put (10,8) {\large $\frac{p_1}{q_1}$}
      \put (34,4) {\large $\frac{p_2}{q_2}$}
      \put (68,4) {\large $\frac{p_3}{q_3}$}
    \end{overpic}
  }
\caption{Surgery diagram of the Seifert fibered space $S^2(e; \frac{p_1}{q_1}, \frac{p_2}{q_2}, \frac{p_3}{q_3})$.}
\label{fig:sfs_surgery_diagram}
\end{figure}

Although Theorem~\ref{thm:nonint} is phrased in terms of positive surgeries, we will instead reflect $Y$, if necessary, to assume that it bounds a positive definite plumbing, i.e so that $e(Y)\geq 2$. Thus in order to prove Theorem~\ref{thm:nonint} we have two possible cases to consider. Either we have $e(Y)=2$ and $p/q<0$ or we have that $e(Y)\geq 3$. We deal with these two regimes differently. The main technical content of this paper comes in the analysis of the $e(Y)=2$ case. The key point is that the definite plumbing bounding a Seifert fibered space is an example of a ``sharp'' manifold, meaning that, roughly speaking, its intersection form determines the Heegaard Floer $d$-invariants of its boundary \cite{Ozsvath03plumbed}. This allows us to apply the changemaker lattice surgery obstruction developed by Greene for integer and half-integer surgeries \cite{greene2010space, Greene3Braid}, and extended to all non-integer surgeries by Gibbons \cite{gibbons2013deficiency}. This reduces the problem to studying when the intersection form of a star-shaped plumbing can be isomorphic to a changemaker lattice. Almost all previous applications of changemaker lattices have involved studying situations in which changemaker lattices are isomorphic to graph lattices. However, when $e(Y)=2$ the intersection form of the relevant star-shaped plumbing is not a graph lattice, meaning that new ideas are required to apply the changemaker obstruction. The majority of the technical innovation in this paper comes from circumventing the fact that we are not dealing with a graph lattice. When $e(Y)\geq 3$ the Seifert fibered space is the branched double cover of an alternating Montesinos link. This allows us to apply previous results describing when the double branched cover of an alternating link can arise by non-integer surgery \cite{mccoy2014noninteger}. Although the results of \cite{mccoy2014noninteger} were derived using changemaker lattices, we do not explicitly use lattice theoretic techniques in this part of the proof. We prove the theorem by considering Conway spheres in alternating diagrams of Montesinos links. 

The structure of the paper is as follows. We begin in Section~\ref{sec:SF_surgeries} by recalling some properties of Seifert fibered surgeries and observing that Conjecture~\ref{conj:non_hyp_vers} is true for surgeries with $q\geq 9$. Sections~\ref{sec:SF_plumbings}~and~\ref{sec:CM_lattices} contain the necessary background on lattices, with Section~\ref{sec:SF_plumbings} discussing the necessary results on the intersection forms of plumbings and Section~\ref{sec:CM_lattices} addressing changemaker lattices. The technical results necessary for the $e(Y)=2$ case of Theorem~\ref{thm:nonint} are developed in Section~\ref{sec:non_int_surgeries}. The $e(Y)\geq 3$ case is studied in Section~\ref{sec:alt_case}. Finally, in Section~\ref{sec:proof_of_main}, we pull together all the necessary results to prove Theorem~\ref{thm:nonint} and Proposition~\ref{prop:e_calc}.

\subsection*{Acknowledgements} The first author would like to thank Cameron Gordon for his support, encouragement and helpful conversations. The second author has been thinking (mostly unsuccessfully) about questions relating to this paper for a number of years. He is grateful to numerous people, most notably Josh Greene and Brendan Owens, for helpful conversations during various incarnations of this project. Both authors would like to thank the anonymous referee for their detailed feedback. 

\section{Seifert fibered surgeries}\label{sec:SF_surgeries}
In this section we justify the equivalence of Conjecture~\ref{conj:hyp_vers} and Conjecture~\ref{conj:non_hyp_vers}. We also note that Conjecture~\ref{conj:hyp_vers} is true for $q\geq 9$.
\begin{lemma}\label{lem:increasing_q}
Let $K$ be a knot which is not a torus knot or a cable of a torus knot with a Seifert fibered surgery $S_{p/q}^3(K)$ for some $q\geq 2$. Then there is a hyperbolic knot $K'$ and $q'\geq q$ such that $S_{p/q}^3(K)\cong S_{p/q'}^3(K')$.
\end{lemma}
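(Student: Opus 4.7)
My plan is to argue, using Thurston's trichotomy for knots in $S^3$, that under the given hypotheses $K$ must in fact be hyperbolic already, so that one can simply take $K' = K$ and $q' = q$. A knot in $S^3$ is either a torus knot, a satellite knot, or a hyperbolic knot. The hypothesis rules out torus knots directly, and if $K$ is hyperbolic there is nothing to prove; so the whole task reduces to ruling out the possibility that $K$ is a non-trivial satellite knot.

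Assume for contradiction that $K$ is a satellite, with companion torus $T$ bounding a solid torus $V \subset S^3$ whose core $K_0$ is a non-trivial companion knot, and with pattern $P \subset V$ isotopic to $K$. I would first establish that $T$ remains essential in $S^3_{p/q}(K)$ when $q \geq 2$. Incompressibility of $T$ on the outside is automatic since $K_0$ is non-trivial, so that $S^3 \setminus V$ is not a solid torus. Incompressibility on the inside is equivalent to the filled pattern exterior $X = (V \setminus N(P))$ Dehn-filled along $p/q$ not being a solid torus, and this is precisely where the non-integrality hypothesis $q \geq 2$ enters. I would invoke the Miyazaki--Motegi-style classification of satellite knots admitting Seifert fibered surgeries to reduce $P$ to a cable pattern in $V$, and then verify by a direct computation with the Seifert fibration of the cable space that $X$ is Seifert fibered over a disk with two exceptional fibers, of multiplicities $a$ (the winding number of the cable) and $q$, both at least $2$. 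Such an $X$ is not a solid torus, so $T$ is essential in the surgered manifold.

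With $T$ essential in $S^3_{p/q}(K)$, it appears as a JSJ torus, and for the surgered manifold to be Seifert fibered each JSJ piece must itself be Seifert fibered. The outer piece is the exterior of $K_0$ in $S^3$, which is Seifert fibered exactly when $K_0$ is a torus knot. Combining this with the fact that $P$ is a cable pattern gives that $K$ is the cable of a torus knot, contradicting the hypothesis of the lemma. Thus the satellite case is impossible, $K$ is hyperbolic, and the conclusion holds with $K' = K$ and $q' = q$. The main obstacle is justifying the essentiality of the companion torus after non-integer surgery: this is exactly the step relying on the classification of which satellite knots can support Seifert fibered surgeries, and once this input is secured, the remaining JSJ/Seifert-fibered analysis is routine.
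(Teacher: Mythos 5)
Your plan is to show that $K$ itself must be hyperbolic and take $K'=K$, $q'=q$; but this is strictly stronger than the lemma and is in fact not provable this way --- the whole point of the lemma is that the satellite case genuinely occurs (at least, cannot be excluded), and in that case one must pass to the companion with a \emph{larger} denominator. The concrete error is in your claim that the filled pattern exterior $X=(V\setminus N(P))(p/q)$ is Seifert fibered over the disk with two exceptional fibers of multiplicities $a$ and $q$. For a cable pattern $C_{a,b}\subset V$, the cable space is Seifert fibered over the annulus with one exceptional fiber of order $a$, and filling $\partial N(P)$ along $p/q$ produces (when $p/q\neq ab$) a Seifert fibering over the disk with exceptional fibers of orders $a$ and $|p-qab|$, i.e.\ the second multiplicity is the distance of the surgery slope from the cabling slope, not $q$. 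When $|p-qab|=1$ --- which can happen for every $q\geq 2$, namely for slopes $ab\pm\frac1q$ --- the filled space is a solid torus, the companion torus compresses, and by Gordon's cabling formula $S^3_{p/q}(C_{a,b}(K_0))\cong S^3_{p/(qa^2)}(K_0)$. So your ``$T$ remains essential'' step fails exactly in the relevant case, no contradiction is reached, and the conclusion ``$K$ is hyperbolic'' does not follow. Indeed, if it did, you would have shown that no cable of a hyperbolic knot admits a non-integer Seifert fibered surgery, which is (part of) the open Conjecture~\ref{conj:non_hyp_vers} that this paper is making partial progress on, and which Lemma~\ref{lem:increasing_q} is designed to sidestep rather than resolve.

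The paper's proof handles the satellite case instead of excluding it: take an innermost incompressible torus $R$ in the exterior of $K$; since $S^3_{p/q}(K)$ is a small Seifert fibered space (Boyer--Zhang), it is atoroidal, so $R$ must bound a solid torus after surgery, hence the pattern knot $C\subset S^1\times D^2$ admits a non-trivial solid torus surgery; by Gabai, $C$ is a torus knot (cable pattern) or a 1-bridge braid, and 1-bridge braids only admit integer solid-torus surgeries, so $q\geq 2$ forces $C$ to be a cable pattern. Then $K$ is a cable of the companion $K'$, which is hyperbolic by innermostness and the hypothesis that $K$ is not a cable of a torus knot, and the cabling formula gives $S^3_{p/q}(K)\cong S^3_{p/q'}(K')$ with $q'=qw^2\geq q$, $w\geq 2$ the winding number. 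You should restructure your argument along these lines: use atoroidality of the surgered manifold to force the companion torus to compress, rather than trying to show it stays essential.
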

\begin{proof}
  By Thurston's work every knot is either a hyperbolic knot, a satellite knot or a torus knot \cite{Riley79hyperbolic, Thurston823DKleinanGroups}. Applied to $K$, this shows that $K$ is a hyperbolic knot or a satellite knot. If $K$ is hyperbolic then we may take $K'=K$ and $q'=q$. Thus suppose that $K$ is a satellite knot. Consider an innermost incompressible torus $R$ in $S^3 \setminus \nu K$. This cuts $S^3 \setminus \nu K$ into two components, one of these is the complement of a knot $K'\subset S^3$ and on the other side the complement of a knot $C \subseteq S^1 \times D^2$ in a solid torus. The innermost assumption on $R$ implies that $K'$ is either a hyperbolic knot or a torus knot. Since $S_{p/q}^3(K)$ is a small Seifert fibered space \cite[Corollary~J]{Boyer94exceptional}, it is irreducible and atoroidal. Therefore after performing surgery the torus $R$ must bound a solid torus. In particular, $C$ must be a knot in $S^1 \times D^2$ with a non-trivial $S^1\times D^2$ surgery. By the work of Gabai \cite[Lemma~2.3]{gabai89solidtori}, this implies that $C$ is either a torus knot or a 1-bridge braid in the solid torus.
However Gabai has also shown that 1-bridge braids admit only integer solid torus surgeries \cite[Lemma~3.2]{gabai90onebridge}. Thus $C$ must a torus knot in $S^1 \times D^2$. This implies that $K$ is a cable of $K'$. As we are assuming that $K$ is not a cable of a torus knot, it follows that $K'$ is a hyperbolic knot.
Since the torus $R$ bounds a solid torus after performing surgery on $C$ it follows that $S^3_{p/q}(K) \cong S^3_{p'/q'}(K')$, where $p'/q'$ is the slope on $R$ which bounds a disk after this surgery. By considering how the homology of a solid torus changes under surgery  one can see that $p'/q'=p/(qw^2)$, where $w\geq 2$ is the winding number of $C$ \cite[Lemma~3.3]{Gordon83Satellite}. This completes the proof of the lemma.
\end{proof}
This allows us to prove the following two useful results.
\begin{prop}\label{prop:conjectures_equiv}
Conjecture~\ref{conj:hyp_vers} $\Leftrightarrow$ Conjecture~\ref{conj:non_hyp_vers}
\end{prop}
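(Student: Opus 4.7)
The plan is to reduce both implications to Lemma~\ref{lem:increasing_q}; the substantive geometric work is already encapsulated there, so what remains is essentially bookkeeping.

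For the forward direction ($\Rightarrow$), I would assume Conjecture~\ref{conj:hyp_vers} and, given a knot $K$ with a Seifert fibered surgery $S^3_{p/q}(K)$ where $q \geq 2$, argue by contradiction. If $K$ were neither a torus knot nor a cable of a torus knot, Lemma~\ref{lem:increasing_q} would produce a hyperbolic knot $K'$ and an integer $q' \geq q \geq 2$ with $S^3_{p/q'}(K') \cong S^3_{p/q}(K)$. This directly contradicts Conjecture~\ref{conj:hyp_vers}, so the case that $K$ is not a torus knot or a cable of a torus knot cannot occur.

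For the reverse direction ($\Leftarrow$), I would assume Conjecture~\ref{conj:non_hyp_vers} and let $K$ be a hyperbolic knot with $S^3_{p/q}(K)$ Seifert fibered. If $q \geq 2$, Conjecture~\ref{conj:non_hyp_vers} forces $K$ to be a torus knot or a cable of a torus knot. Neither of these classes contains hyperbolic knots: a torus knot has a Seifert fibered exterior, and any non-trivial cable is a satellite knot (while a cable of the unknot is itself a torus knot). Hence $q = 1$.

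There is no serious obstacle in this argument, since Lemma~\ref{lem:increasing_q} already encodes the nontrivial input (Thurston's trichotomy, Gabai's classification of nontrivial solid-torus surgeries, and the fact that the resulting companion structure must come from cabling). All that remains is to pair it with the standard observation that torus knots and their cables are not hyperbolic, and both implications then fall out immediately.
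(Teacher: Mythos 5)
Your proposal is correct and follows essentially the same route as the paper: the forward implication via Lemma~\ref{lem:increasing_q} and the reverse implication via the observation that torus knots and cables of torus knots are not hyperbolic. You have simply spelled out the bookkeeping a bit more explicitly than the paper does.
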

\begin{proof}
The implication Conjecture~\ref{conj:hyp_vers} $\Leftarrow$ Conjecture~\ref{conj:non_hyp_vers} follows from the fact that torus knots and cables of torus knots are not hyperbolic knots.
 The implication Conjecture~\ref{conj:hyp_vers} $\Rightarrow$ Conjecture~\ref{conj:non_hyp_vers} follows from Lemma~\ref{lem:increasing_q} since Conjecture~\ref{conj:hyp_vers} asserts that no hyperbolic knot $K'$ satisfying the conclusion of the lemma can exist.
\end{proof}
\begin{prop}\label{prop:q>9_case}
If $S_{p/q}^3(K)$ is a Seifert fibered space and $q\geq 9$ then $K$ is a cable of a torus knot or a torus knot.
\end{prop}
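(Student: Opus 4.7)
The plan is to proceed by contradiction: suppose $K$ is neither a torus knot nor a cable of a torus knot. Then Lemma~\ref{lem:increasing_q} applies to produce a hyperbolic knot $K' \subset S^3$ and an integer $q' \geq q \geq 9$ with $S^3_{p/q'}(K') \cong S^3_{p/q}(K)$ Seifert fibered. It therefore suffices to rule out the existence of a hyperbolic knot in $S^3$ whose complement admits a Seifert fibered Dehn filling along a slope with denominator at least $9$.

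For this, I would invoke the sharp bound of Lackenby and Meyerhoff (resolving Gordon's conjecture) on the distance between two exceptional slopes on a one-cusped hyperbolic $3$-manifold: any two non-hyperbolic Dehn fillings are along slopes of distance at most $8$. Applied to $S^3 \setminus \nu K'$, the meridional slope $1/0$ is exceptional, since the trivial filling yields $S^3$, and the slope $p/q'$ is exceptional, since it yields a Seifert fibered space. The distance between $1/0$ and $p/q'$ in the standard sense is $|1 \cdot q' - 0 \cdot p| = |q'|$, which must therefore be at most $8$. This contradicts $q' \geq 9$.

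The argument is quite short given Lemma~\ref{lem:increasing_q}, so the only real subtlety is correctly citing the exceptional slope distance bound; I do not expect a genuine obstacle beyond that. If a weaker distance bound were easier to cite, one could still carry out the same scheme at the cost of replacing the threshold $9$ with a larger constant, but the Lackenby--Meyerhoff bound of $8$ yields exactly the stated proposition.
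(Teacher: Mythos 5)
Your proposal is correct and follows essentially the same route as the paper: apply Lemma~\ref{lem:increasing_q} to reduce to a hyperbolic knot $K'$, then use the Lackenby--Meyerhoff bound of $8$ on the distance between exceptional slopes (here the meridian, giving $S^3$, and $p/q'$, giving a Seifert fibered space) to force $q'\leq 8$. The paper's proof is a condensed version of exactly this argument, so there is nothing to add.
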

\begin{proof}
Lackenby and Meyerhoff have shown that the distance between exceptional fillings on a hyperbolic knot is eight \cite{Lackenby-meyerhoff-13}. Therefore if $K'$ is a hyperbolic knot such that $S_{p/q'}^3(K')$ is a Seifert fibered space, then $q' \leq 8$. Hence the proposition follows from Lemma~\ref{lem:increasing_q}.
\end{proof}

\section{Seifert fibered spaces and plumbings}\label{sec:SF_plumbings}
In this paper we use $S^2(e; \frac{p_1}{q_1}, \frac{p_2}{q_2}, \frac{p_3}{q_3})$ to denote the space obtained by surgery on the link as in Figure~\ref{fig:sfs_surgery_diagram}, where $e\in \Z$ and for each $i$, $p_i$ and $q_i$ are coprime integers. This is a Seifert fibered space with three exceptional fibers provided that $|p_i|>1$ for $i=1,2,3$. By performing Rolfsen twists on the $p_i/q_i$-framed components, we see that there is an orientation preserving homeomorphism between
$S^2\left(e; \frac{p_1}{q_1}, \frac{p_2}{q_2},\frac{p_3}{q_3}\right)$ and $S^2\left(e'; \frac{p_1'}{q_1'}, \frac{p_2'}{q_2'},\frac{p_3'}{q_3'}\right)$
whenever
\begin{equation}\label{eq:homeo1}
e-\frac{q_1}{p_1}- \frac{q_2}{p_2}-\frac{q_3}{p_3}= e'- \frac{q_1'}{p_1'}- \frac{q_2'}{p_2'}-\frac{q_3'}{p_3'}
\end{equation}
and there is a permutation $\pi$ of $\{1,2,3\}$ such that 
\begin{equation}\label{eq:homeo2}
\frac{q_i}{p_i} \equiv \frac{q_{\pi(i)}'}{p_{\pi(i)}'} \bmod 1 \quad\text{for $i=1,2,3$.}
\end{equation}
Conversely it follows from the classification of Seifert fibered space (see, for example, the results in \cite[Section~5.3]{Orlik}) 
that conditions \eqref{eq:homeo1} and \eqref{eq:homeo2} are, in fact, necessary for there to be an orientation preserving homeomorphism between $S^2\left(e; \frac{p_1}{q_1}, \frac{p_2}{q_2},\frac{p_3}{q_3}\right)$ and $S^2\left(e'; \frac{p_1'}{q_1'}, \frac{p_2'}{q_2'},\frac{p_3'}{q_3'}\right)$.
The generalized Euler invariant of $Y\cong S^2(e; \frac{p_1}{q_1}, \frac{p_2}{q_2}, \frac{p_3}{q_3})$ is defined to be
\[
\varepsilon(Y):= e - \frac{q_1}{p_1}-\frac{q_2}{p_2}-\frac{q_3}{p_3}.
\]
By the above discussion, one sees that $\varepsilon(Y)$ is a topological invariant. Reversing the orientation on the Seifert fibered space $Y$ yields the Seifert fibered space
\[
-Y\cong S^2\left(-e; -\frac{p_1}{q_1}, -\frac{p_2}{q_2},-\frac{p_3}{q_3}\right).
\]
Thus we see that the generalized Euler characteristic satisfies
\[
\varepsilon(-Y)= -\varepsilon(Y)
\]
Using the surgery description of $Y$ in Figure~\ref{fig:sfs_surgery_diagram} one finds that the order of its first homology can be calculated as:
\[
|H_1 (Y;\Z)| = |(p_1 p_2 p_2) \varepsilon(Y)|.
\]
It follows that if $Y$ is a rational homology sphere if and only if $\varepsilon(Y) \neq 0$. Thus if $Y$ is a Seifert fibered space rational homology sphere, then $Y$ can be oriented so that $\varepsilon(Y)>0$. 

\subsection{Minimal definite plumbings}\label{subsec:mindef}
Let $Y$ be a Seifert fibered rational homology sphere with three exceptional fibers oriented so that $\varepsilon(Y)>0$. The discussion at the start of this section shows that $Y$ has a unique description in the form
\[
Y\cong S^2\left(e; \frac{p_1}{q_1}, \frac{p_2}{q_2}, \frac{p_3}{q_3}\right),
\]
where $e>0$ and $\frac{p_1}{q_1}, \frac{p_2}{q_2}, \frac{p_3}{q_3}>1$. We define $e(Y)$ to let the value $e$ in this presentation with the convention that $e(-Y)=-e(Y)$. This is the invariant $e(Y)$ appearing in the statement of Theorem~\ref{thm:nonint}. As we will see, this quantity is precisely the weight of the central vertex in the minimal definite plumbing that $Y$ bounds.

There is a unique continued fraction expansion
\[
\frac{p_1}{q_1} = [a_1, \ldots, a_k]^{-} = a_1 - \cfrac{1}{a_2 - \cfrac{1}{\begin{aligned}\ddots \,\,\, & \\[-3ex] & a_{k-1} - \cfrac{1}{a_{k}} \end{aligned}}}\;,
\]
where $k \ge 1$ and $a_j \ge 2$ for all $j \in \{1, \ldots, k\}$. Similarly, we may write $\frac{p_2}{q_2} = [b_1, \ldots, b_l]^{-}$ and $\frac{p_3}{q_3} = [c_1, \ldots, c_m]^{-}$, where $l, m \ge 1$ and $b_j\ge 2$ and $c_j\ge 2$ for all $j$.
By performing a sequence of reverse slam-dunks to convert the fractional surgery coefficients to integer coefficients, we see that $Y$ has a surgery description as shown in Figure~\ref{fig:corresponding_surgery}. Since these surgery coefficients are integers, this can also be viewed as a Kirby diagram for a 4-manifold $X$ with $\partial X = Y$. This manifold is diffeomorphic to one obtained by plumbing disk bundles over $S^2$ according to the star-shaped graph given in Figure~\ref{fig:star_plumbing} (see Section~6.1 of \cite{GompfStipsicz}). This $X$ is precisely the unique minimal positive definite plumbing that $Y$ bounds \cite[Theorem 5.2]{MR518415}. Given the plumbing diagram as in Figure~\ref{fig:star_plumbing} we can define an integer lattice $(\Lambda_\Gamma, Q_\Gamma)$, where $\Lambda_\Gamma$ is the free abelian group generated by the vertices of $\Gamma$ and $Q_\Gamma \colon \Lambda_\Gamma \times \Lambda_\Gamma \rightarrow \Z$ is the bilinear pairing with
\[
Q_\Gamma(u,v) = \begin{cases}
      \text{w}(v), & \mbox{if }u = v \\
      -1, & \mbox{if vertices }u\mbox{ and }v\mbox{ are connected by an edge} \\
      0, & \mbox{otherwise}
   \end{cases},
\]
where $u$ and $v$ are vertices of $\Gamma$ and $\text{w}(v)$ denotes the weight of vertex $v$. The lattice $(\Lambda_\Gamma, Q_\Gamma)$ is naturally isomorphic to the intersection form of $X$, and hence is positive definite. We will write $x \cdot y$ to denote the pairing $Q_X(x, y)$ and $\norm{x}$ to denote $Q_X(x,x)$.

\begin{figure}[!ht]
  \centerline{
    \begin{overpic}[width=0.5\textwidth]{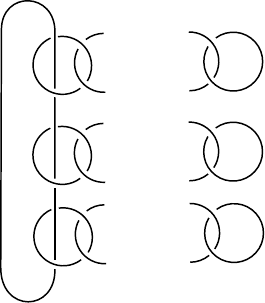}
      \put (-4,50) {$e$}
      \put (21,91) {$a_1$}
      \put (34,91) {$a_2$}
      \put (55,91) {$a_{k-1}$}
      \put (86,87) {$a_k$}
      \put (21,60) {$b_1$}
      \put (34,60) {$b_2$}
      \put (55,60) {$b_{l-1}$}
      \put (86,57) {$b_l$}
      \put (21,34) {$c_1$}
      \put (34,34) {$c_2$}
      \put (55,34) {$c_{m-1}$}
      \put (86,30) {$c_m$}
      \put (45,78) {$\dots$}
      \put (45,50) {$\dots$}
      \put (45,25) {$\dots$}
    \end{overpic}
  }
\caption{The Kirby diagram for $X$ (also a surgery diagram for $\partial X = Y$).}
\label{fig:corresponding_surgery}
\end{figure}

\begin{figure}[!ht]
  \centerline{
    \begin{overpic}[width=0.5\textwidth]{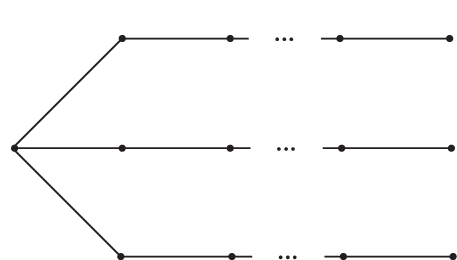}
      \put (-2,23) {\large $e$}
      \put (24,52) {\large $a_1$}
      \put (47,52) {\large $a_2$}
      \put (70,52) {\large $a_{k-1}$}
      \put (95,52) {\large $a_k$}
      \put (24,28) {\large $b_1$}
      \put (47,28) {\large $b_2$}
      \put (70,28) {\large $b_{l-1}$}
      \put (95,28) {\large $b_l$}
      \put (24,4) {\large $c_1$}
      \put (47,4) {\large $c_2$}
      \put (70,4) {\large $c_{m-1}$}
      \put (95,4) {\large $c_m$}
    \end{overpic}
  }
\caption{Weighted star-shaped plumbing graph $\Gamma$.}
\label{fig:star_plumbing}
\end{figure}

\subsection{Quasi-alternating plumbings}
In order to prove Theorem~\ref{thm:nonint} we need to understand the properties of lattices arising as the intersection forms in the case $e=2$. For topological reasons we need only consider a special subset of such forms. The following was proven by the first author in his classification of quasi-alternating Montesinos links \cite{issa2017quasialternating}.

\begin{lemma}\label{lem:QAchar}
 Let $Y = S^2(e; \frac{p_1}{q_1}, \frac{p_2}{q_2}, \frac{p_3}{q_3})$, $e\ge 2$, be such that $Y$ is the boundary of the (canonical) positive definite plumbing $4$-manifold $X$. Then the following are equivalent:
\begin{enumerate}[(i)]
\item\label{it:neg_def} $Y$ bounds a negative definite $4$-manifold $W$ with $H_1(W)$ torsion free.
\item\label{it:dbc_QA} $Y$ is homeomorphic to the double branched cover of a quasi-alternating Montesinos link.
\item\label{it:num_condition} Either $e \ge 3$, or $e = 2$ and $\frac{q_i}{p_i} + \frac{q_j}{p_j} < 1$ for some $i, j \in \{1,2,3\}$ with $i \neq j$.
\item\label{it:embed_condition} If $A$ is a matrix representing some embedding $H_2(X) \xhookrightarrow{} \Z^n$, $n\in\Z_{>0}$, of the intersection lattice of $X$ into a standard positive diagonal lattice with respect to a pair of bases, then $A^T$ is surjective.
\end{enumerate}
\end{lemma}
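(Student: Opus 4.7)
The plan is to prove the lemma via the cyclic implication chain $(iii) \Rightarrow (ii) \Rightarrow (i) \Rightarrow (iv) \Rightarrow (iii)$. Since $Y$ is by construction the double branched cover of the Montesinos link $L := M(e; \tfrac{p_1}{q_1}, \tfrac{p_2}{q_2}, \tfrac{p_3}{q_3})$, the equivalence $(ii) \Leftrightarrow (iii)$ is precisely the classification of quasi-alternating Montesinos links proved in \cite{issa2017quasialternating}, so no new work is needed there.

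For $(ii) \Rightarrow (i)$, I would take a quasi-alternating (in particular alternating) diagram $D$ of $L$ and use the two Goeritz $4$-manifolds associated to the two checkerboard colourings of $D$. Both are simply connected, bound $Y$, and carry intersection forms of opposite signs; after identifying the positive-definite one with $X$ (up to blow-ups), the negative-definite one provides the required filling $W$ with $H_1(W) = 0$.

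For $(i) \Rightarrow (iv)$, let $A : H_2(X) \hookrightarrow \Z^n$ be any embedding of the intersection lattice into a standard positive diagonal lattice. Form the closed positive-definite $4$-manifold $Z = X \cup_Y (-W)$. Mayer-Vietoris, combined with the hypothesis that $H_1(W)$ is torsion-free and that $Y$ is a rational homology sphere, shows that $Q_Z$ is positive definite and unimodular, so Donaldson's theorem presents $H_2(Z)$ as a standard diagonal lattice. Using the orthogonality of $H_2(X)$ and $H_2(-W)$ inside $H_2(Z)$, one argues that the image $A(H_2(X))$ must agree with the orthogonal complement of a positive-definite sublattice realizing $H_2(-W)$; any orthogonal complement inside $\Z^n$ is automatically primitive, so $A(H_2(X))$ is primitive, which is exactly the condition that $A^T$ is surjective.

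Finally, for $(iv) \Rightarrow (iii)$, I would prove the contrapositive: if $e = 2$ and $\tfrac{q_i}{p_i} + \tfrac{q_j}{p_j} \geq 1$ for every distinct pair, then each $\tfrac{q_i}{p_i} \geq \tfrac{1}{2}$ and an explicit non-primitive embedding of $(\Lambda_\Gamma, Q_\Gamma)$ into $\Z^n$ can be constructed. The central weight-$2$ vertex is sent to $e_1 + e_2$, and each of the three legs is then extended using its continued-fraction expansion $[a_1, \ldots, a_k]^-$, $[b_1, \ldots, b_l]^-$, $[c_1, \ldots, c_m]^-$, so that the image of $\Lambda_\Gamma$ lies inside an index-$2$ sublattice of $\Z^n$. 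The main obstacle I anticipate is precisely this final step: writing down the vectors on each leg and verifying that they simultaneously respect all the prescribed pairings while landing in the index-$2$ sublattice requires a careful case analysis tracking how the three continued-fraction tails align to produce the ``doubling'' phenomenon, and this is where most of the technical work of the proof will live.
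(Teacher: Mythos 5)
Note first that the paper does not actually prove this lemma: it is quoted verbatim from \cite{issa2017quasialternating}, so the only comparison available is with the approach of that paper, whose content is precisely the hard lattice-theoretic and quasi-alternating analysis that your outline defers or imports. Within your proposed cycle there are three concrete problems. First, in (ii) $\Rightarrow$ (i) you write ``quasi-alternating (in particular alternating) diagram''; this inclusion goes the wrong way. Quasi-alternating links need not be alternating, and in the case that matters for this paper ($e=2$ with $\frac{q_i}{p_i}+\frac{q_j}{p_j}<1$) the Montesinos link is typically \emph{not} alternating, so there are no definite Goeritz manifolds to use. The correct ingredient is Ozsv\'ath--Szab\'o's result that the branched double cover of a quasi-alternating link bounds a negative definite $4$-manifold with $H_1=0$ (proved inductively from the skein-triangle definition by attaching $2$-handles), together with its mirror statement; your Goeritz argument fails exactly in the essential case.

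Second, your (i) $\Rightarrow$ (iv) argument does not engage with the quantifier in (iv). Condition (iv) is a statement about \emph{every} abstract embedding $A$ of the intersection lattice into \emph{any} $\Z^n$, whereas the Donaldson argument with $Z=X\cup_Y(-W)$ only produces one particular geometric embedding (whose primitivity indeed follows from $H_1(W)$ being torsion free, via $H_2(Z,X)\cong H^2(W)$). The assertion that an arbitrary $A(H_2(X))$ ``must agree with the orthogonal complement of a sublattice realizing $H_2(-W)$'' is unjustified: an abstract embedding knows nothing about $W$, and bridging this gap is exactly the combinatorial analysis of all embeddings of these star-shaped lattices that constitutes the real work in \cite{issa2017quasialternating} (in effect one proves $\neg$(iii) $\Rightarrow\neg$(i) and (iii) $\Rightarrow$ (iv) by lattice combinatorics, rather than (i) $\Rightarrow$ (iv) directly). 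Third, in (iv) $\Rightarrow$ (iii) the claim that $\neg$(iii) forces every $\frac{q_i}{p_i}\ge\frac12$ is false: it only forces at least two of the three to be $\ge\frac12$ (e.g.\ $(\frac13,\frac7{10},\frac7{10})$ satisfies $\neg$(iii)), so the explicit ``index two'' embedding must be built under weaker hypotheses than you assume; and as you acknowledge, that construction --- which is the entire content of this implication --- is not carried out. As it stands the proposal establishes none of the nontrivial implications without importing them from the cited paper.
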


On account of the condition Lemma~\ref{lem:QAchar}\eqref{it:dbc_QA} we make the following definition:
\begin{mydef}
Let $\Gamma$ be star-shaped plumbing graph as in Figure~\ref{fig:star_plumbing}. We say that $\Gamma$ is {\em quasi-alternating} if $e=2$ and the continued fractions
\[\frac{p_1}{q_1}=[a_1, \dots, a_k]^-\] 
and
\[\frac{p_2}{q_2}=[b_1, \dots, b_l]^-\]
satisfy $\frac{q_1}{p_1}+\frac{q_2}{p_2}<1$. We will also call the corresponding lattice $\Lambda_\Gamma$ {\em quasi-alternating}. 
\end{mydef}

In order to study quasi-alternating lattices, it will be convenient to define the following quadratic form.

\begin{mydef} Suppose $k > 0$ and $n_1, \ldots, n_k \in \Z$.
  We denote by $Q_{n_1, \dots, n_k}$ the quadratic form given by
\[
Q_{n_1, \dots, n_k}(x_1, \dots, x_k)=n_1 x_1^2-2x_1 x_2 + n_2 x_2^2 - \dots - 2x_{k-1}x_k + n_k x_k^2,
\]
  for all $x_1, \ldots, x_k \in \Z$.
\end{mydef}
We will begin by proving some preparatory inequalities on quadratic forms of this type.

\begin{lemma}\label{lem:qa_bound_1}
Let $c_1, \dots, c_m\geq 2$ be integers and $z_1, \dots, z_m$ be integers. We have the following inequalities:
\begin{enumerate}[(i)]
\item\label{it:alt_lin_chain}
If at least one $z_i$ is non-zero, then 
\[
Q_{c_1,\dots, c_m}(z_1,\dots, z_m)\geq 2+\sum_{i=1}^m(c_i-2)|z_i|
\]
\item\label{it:lin_chain_1}
\[
Q_{1,c_1,\dots, c_m}(c,z_1,\dots, z_m)\geq |c|+ \sum_{i=1}^m (c_i-2) |z_i|
\]
\item\label{it:lin_chain_c_nonzero}
If $c\neq 0$ or $z_i\neq 0$ for some $i$, then
\[
Q_{1,c_1,\dots, c_m}(c,z_1,\dots, z_m)+|c| \geq 2+ \sum_{i=1}^m (c_i-2) |z_i|.
\] 
\end{enumerate} 
\end{lemma}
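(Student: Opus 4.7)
The three inequalities all quantify the positive-definiteness of tridiagonal quadratic forms whose diagonal entries are at least $2$, and my plan is to prove them uniformly from a single "sum of squares" identity.

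The starting point is the elementary identity
\[
Q_{c_1,\dots,c_m}(z_1,\dots,z_m)
=\sum_{i=1}^m (c_i-2)z_i^2\;+\;z_1^2+z_m^2+\sum_{i=1}^{m-1}(z_i-z_{i+1})^2,
\]
obtained by absorbing $\sum 2z_i^2$ into the $(z_i-z_{i+1})^2$ terms together with $z_1^2$ and $z_m^2$ at the two ends. Since each $c_i\geq 2$, every summand on the right is nonnegative, and since the $z_i$ are integers we have $z_i^2\geq|z_i|$, which immediately gives $\sum(c_i-2)z_i^2\geq\sum(c_i-2)|z_i|$. Thus part~\eqref{it:alt_lin_chain} reduces to showing
\[
z_1^2+z_m^2+\sum_{i=1}^{m-1}(z_i-z_{i+1})^2\;\geq\;2
\]
whenever some $z_i\ne 0$. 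I would handle this by a short case analysis: letting $i_0=\min\{i:z_i\ne 0\}$ and $i_1=\max\{i:z_i\ne 0\}$, the "left end" contributes at least $1$ (either $z_1^2\geq 1$ if $i_0=1$ or $(z_{i_0-1}-z_{i_0})^2=z_{i_0}^2\geq 1$ otherwise), and symmetrically on the right; when $i_0=i_1$ the single nonzero coordinate contributes $z_{i_0}^2\geq 1$ to \emph{two} of the squares, again giving $\geq 2$.

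For parts~\eqref{it:lin_chain_1} and~\eqref{it:lin_chain_c_nonzero}, the identity extends by completing the square in the new variable $c$:
\[
Q_{1,c_1,\dots,c_m}(c,z_1,\dots,z_m)
=(c-z_1)^2+\sum_{i=1}^m(c_i-2)z_i^2+z_m^2+\sum_{i=1}^{m-1}(z_i-z_{i+1})^2,
\]
so the form is again manifestly nonnegative. The key observation is that the integer quantities
\[
a_0=c-z_1,\quad a_i=z_i-z_{i+1}\ (1\leq i\leq m-1),\quad a_m=z_m
\]
satisfy $\sum_{i=0}^m a_i=c$ by telescoping. Since the $a_i$ are integers, $a_i^2\geq|a_i|$, hence
\[
(c-z_1)^2+z_m^2+\sum_{i=1}^{m-1}(z_i-z_{i+1})^2\;=\;\sum_{i=0}^m a_i^2\;\geq\;\sum_{i=0}^m|a_i|\;\geq\;\Bigl|\sum_{i=0}^m a_i\Bigr|\;=\;|c|.
\]
Combined with $\sum(c_i-2)z_i^2\geq\sum(c_i-2)|z_i|$, this is exactly part~\eqref{it:lin_chain_1}.

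For part~\eqref{it:lin_chain_c_nonzero} I only need to improve the bound by $2-|c|$ in the edge cases. If $c=0$ and some $z_i\ne 0$, the remaining terms reduce to the expression bounded below by $2$ in part~\eqref{it:alt_lin_chain}. If $|c|=1$, then $\sum a_i=\pm 1$ forces $\sum|a_i|\geq 1$ and therefore $\sum a_i^2\geq 1$, so adding $|c|=1$ gives the desired $\geq 2$. If $|c|\geq 2$, simply dropping the nonnegative square terms already gives $\geq|c|\geq 2$. I do not anticipate any real obstacle here: the whole proof is a bookkeeping exercise around the identity above, and the only mildly delicate point is remembering that $a_i^2\geq|a_i|$ together with the triangle inequality replaces the (weaker) Cauchy--Schwarz bound one would get over the reals, which is exactly what makes the linear-in-$|c|$ bound in~\eqref{it:lin_chain_1} and the constant $2$ in~\eqref{it:alt_lin_chain} and~\eqref{it:lin_chain_c_nonzero} tight.
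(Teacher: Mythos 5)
Your proof is correct and follows essentially the same route as the paper: the identical complete-the-square identities, the telescoping bound $\sum a_i^2\geq\sum|a_i|\geq|c|$ for part~(ii), and a count of nonvanishing integer square terms for parts~(i) and~(iii) (your case splits on $i_0,i_1$ and on $|c|$ just spell out the paper's observation that at least two of the relevant terms must be nonzero). No gaps.
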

\begin{proof}
We prove \eqref{it:alt_lin_chain} first. Since $c_i \ge 2$ for all $1 \le i \le m$ we can complete the square to obtain
\begin{equation}\label{eq:sq_com_1}
Q_{c_1,\ldots,c_m}(z_1,\ldots,z_m)= z_1^2 + (z_1 - z_2)^2 + \cdots + (z_{m-1} - z_m)^2 + z_m^2 + \sum_{i=1}^m z_i^2(c_i-2).
\end{equation}
If $z_i$ is non-zero for some $i$, then at least two of the terms
\[z_1^2,  (z_1 - z_2)^2, \dots, (z_{m-1} - z_m)^2, \quad \text{or}\quad z_m^2\]
must be non-zero. Since these terms are all integers, this gives the desired inequality when combined with \eqref{eq:sq_com_1}.

Now we prove \eqref{it:lin_chain_1} and \eqref{it:lin_chain_c_nonzero}.
 Since $c_i \ge 2$ for all $1 \le i \le m$, we can complete the square to obtain
\begin{equation}\label{eq:sq_com_2}
Q_{1,c_1,\ldots,c_m}(c,z_1,\ldots,z_m)=(c - z_1)^2 + (z_1 - z_2)^2 + \cdots + (z_{m-1} - z_m)^2 + z_m^2 + \sum_{i=1}^m z_i^2(c_i-2). 
\end{equation}
However notice that we have
\begin{align*}
(c - z_1)^2 + (z_1 - z_2)^2 + \cdots + (z_{m-1} - z_m)^2 + z_m^2 &\geq  |c - z_1| + |z_1 - z_2| + \cdots + |z_{m-1} - z_m| + |z_m|\\
&\geq |(c - z_1) + \cdots + (z_{m-1} - z_m) + z_m| = |c|.
\end{align*}
Combining this with \eqref{eq:sq_com_2} proves \eqref{it:lin_chain_1}.

To prove \eqref{it:lin_chain_c_nonzero} observe that if at least one of $c, z_1, \dots, z_m$ is non-zero then at least two of the terms
\[|c|, (c - z_1)^2,  (z_1 - z_2)^2, \dots, (z_{m-1} - z_m)^2, \quad\text{or}\quad z_m^2\]
must be non-zero. Since each of these terms are integers, this gives the desired inequality when combined with \eqref{eq:sq_com_2}. 
\end{proof}

\begin{lemma}\label{lem:qa_lb_technical} Let $a_1, \ldots, a_k, b_1,\ldots, b_l \ge 2$ be integers and let $\frac{p_1}{q_1}=[a_1,\dots, a_k]^-$ and $\frac{p_2}{q_2}=[b_1, \dots, b_l]^-$ where $(p_i, q_i) = 1$ and $p_i > q_i \ge 1$ for $i \in \{1,2\}$. Suppose that $\frac{q_1}{p_1} + \frac{q_2}{p_2} < 1$. Then for any integers $x_1,\ldots,x_k,y_1,\ldots,y_l,c$ with at least one of $c$ or the $x_i$ or $y_i$ non-zero, we have
\begin{equation}\label{eq:key_technical_ineq}
Q_{a_k,\dots, a_1,1,b_1, \dots,b_l}(x_k,\ldots,x_1,c,y_1,\ldots,y_l) + |c| \geq 2 + \sum_{i=1}^k (a_i-2)|x_i| + \sum_{i=1}^l (b_i - 2)|y_i|.
\end{equation}
\end{lemma}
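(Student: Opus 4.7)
The plan is to prove the inequality by combining a ``blowdown'' identity for the central weight-$1$ vertex with square completion, using induction on $k + l$. The key identity, verified by direct expansion, is
\[
Q_{a_k, \dots, a_1, 1, b_1, \dots, b_l}(x_k, \dots, x_1, c, y_1, \dots, y_l) = Q_{a_k, \dots, a_2, a_1-1, b_1-1, b_2, \dots, b_l}(x_k, \dots, x_1, y_1, \dots, y_l) + (c - x_1 - y_1)^2.
\]
The hypothesis $\frac{q_1}{p_1} + \frac{q_2}{p_2} < 1$ is equivalent to positive definiteness of the form $Q$ (its determinant can be computed to be $p_1 p_2 - p_1 q_2 - p_2 q_1$), which in particular rules out $a_1 = b_1 = 2$.

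First I would dispose of some easy cases: if $(x_i)$ and $(y_j)$ all vanish then $Q + |c| = c^2 + |c| \geq 2$ since $|c| \geq 1$; and if $x_1 = 0$ (or $y_1 = 0$ symmetrically) then $Q$ splits cleanly as $Q_{a_k,\dots,a_2}(x_k,\dots,x_2) + Q_{1, b_1, \dots, b_l}(c, y_1, \dots, y_l)$, and the inequality follows by combining parts \eqref{it:alt_lin_chain} and \eqref{it:lin_chain_c_nonzero} of Lemma~\ref{lem:qa_bound_1}. The main case is $x_1, y_1$ both nonzero. Applying the blowdown identity: when $a_1 = 2$ (forcing $b_1 \geq 3$ by the hypothesis), the reduced form $Q_{a_k, \dots, a_2, 1, b_1-1, b_2, \dots, b_l}$ with $x_1$ as central variable fits the lemma's template with $k' = k-1$, $l' = l$, and new continued fractions $p_1'/q_1' = [a_2, \dots, a_k]^-$, $p_2'/q_2' = [b_1 - 1, b_2, \dots, b_l]^-$; the condition $q_1'/p_1' + q_2'/p_2' < 1$ transfers from the original hypothesis (both being equivalent to positive definiteness, which is preserved by the blowdown since $\det Q' = \det Q > 0$). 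The inductive hypothesis applied with $c' = x_1$, combined with the added $(c - x_1 - y_1)^2 + |c|$ and the observation that $(a_1 - 2)|x_1| = 0$, then gives the result. The case $b_1 = 2$ is symmetric.

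When $a_1, b_1 \geq 3$, the induction terminates and we work directly. The reduced chain $Q'$ has all weights $\geq 2$, and its standard square completion
\[
Q' = [x_k^2 + (x_k - x_{k-1})^2 + \dots + (x_2 - x_1)^2] + (x_1 - y_1)^2 + [(y_1 - y_2)^2 + \dots + y_l^2] + \sum_{i\geq 2}(a_i - 2) x_i^2 + (a_1-3)x_1^2 + (b_1-3)y_1^2 + \sum_{j\geq 2}(b_j - 2) y_j^2
\]
combined with $(c - x_1 - y_1)^2 + |c|$ and the integer bound $t^2 \geq |t|$ reduces the inequality to showing that the end-squares together with $(c - x_1 - y_1)^2 + |c|$ and the diagonal slack $\sum (a_i - 2)|x_i|(|x_i| - 1) + \sum (b_j - 2)|y_j|(|y_j| - 1)$ compensate for the shortfall $-|x_1| - |y_1|$ produced when bounding $(a_1 - 3)|x_1|$ instead of $(a_1 - 2)|x_1|$ (and similarly for $y_1$). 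This is the main technical obstacle. The decisive step is a sign-based case analysis: when $x_1, y_1$ have opposite signs the bound $(x_1 - y_1)^2 \geq |x_1| + |y_1|$ closes the gap immediately; when they share a sign, the diagonal slack $(a_1 - 2)|x_1|(|x_1|-1) + (b_1 - 2)|y_1|(|y_1|-1) \geq |x_1|(|x_1|-1) + |y_1|(|y_1|-1)$ (using $a_1, b_1 \geq 3$) dominates the negative cross-term $-2x_1 y_1$ whenever $|x_1|$ or $|y_1|$ is at least $2$, while the residual case $|x_1| = |y_1| = 1$ is verified directly using $(c - x_1 - y_1)^2 + |c| \geq 2$.
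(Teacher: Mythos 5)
Your strategy is essentially the paper's: your ``blowdown identity'' is the paper's square completion at the weight-one vertex, and your induction in the $a_1=2$ (or $b_1=2$) branch, with the condition $\frac{q_1'}{p_1'}+\frac{q_2'}{p_2'}<1$ transferred to the reduced fractions, mirrors the paper exactly (your determinant argument for the transfer is a clean alternative to the paper's direct computation). However, as written there is a genuine gap in the inductive step: after invoking the inductive hypothesis for $Q_{a_k,\dots,a_2,1,b_1-1,\dots,b_l}$ with $x_1$ as the central variable, what your argument still requires is $(c-x_1-y_1)^2+|c|\geq |x_1|+|y_1|$, and this is false for mixed signs, e.g.\ $x_1=1$, $y_1=-1$, $c=0$ gives $0\geq 2$. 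The missing idea is the preliminary normalization the paper performs: the right-hand side of \eqref{eq:key_technical_ineq} depends only on absolute values, while replacing every variable by its absolute value does not increase the left-hand side (each cross term satisfies $-2uv\geq -2|u||v|$), so one may assume $c,x_i,y_j\geq 0$; only then is $(c-x_1-y_1)^2+|c|\geq x_1+y_1$ valid (it is the integer inequality $t^2\geq t$ with $t=x_1+y_1-c$). Your explicit sign casework later in the sketch shows this reduction was not implicitly assumed, so the inductive branch does not close as stated.

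Two further points. First, your induction has an unhandled boundary case: if $a_1=2$ and $k=1$ the reduced form is $Q_{1,b_1-1,\dots,b_l}(x_1,y_1,\dots,y_l)$, which has no left leg and does not fit the lemma's template ($k'=0$, so $p_1'/q_1'$ is undefined); the paper disposes of this case with Lemma~\ref{lem:qa_bound_1}\eqref{it:lin_chain_c_nonzero}. Second, in the $a_1,b_1\geq 3$ case your same-sign step asserts that the diagonal slack, bounded below by $|x_1|(|x_1|-1)+|y_1|(|y_1|-1)$, dominates the cross term $-2x_1y_1$ whenever $|x_1|$ or $|y_1|$ is at least $2$; this is false (for $|x_1|=|y_1|=2$ it reads $4\geq 8$). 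After the nonnegativity reduction none of this casework is needed: $(c-x_1-y_1)^2+|c|\geq x_1+y_1$ supplies the shortfall at $x_1$ and $y_1$ directly, and the ``$+2$'' comes from two nonzero squares in the completion of the reduced chain since $x_1,y_1\neq 0$ --- equivalently, as in the paper, one simply applies Lemma~\ref{lem:qa_bound_1}\eqref{it:alt_lin_chain} to the reduced chain with weights $a_1-1,b_1-1\geq 2$. With these repairs your outline becomes the paper's proof.
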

\begin{proof}
First observe that
\[Q_{a_1,\dots, a_k,1,b_1,\dots, b_l}(0,\dots,0,c,0,\dots,0)+|c|=c^2+|c|.\]
So if $c\neq 0$ and $x_1=\dots = x_k=y_1=\dots= y_l=0$, then 
\[Q_{a_k,\dots, a_1,1,b_1, \dots,b_l}(x_k,\ldots,x_1,c,y_1,\ldots,y_l)\geq 2,
\]
which is the desired inequality.
Thus we may assume that at least one of the $x_i$ or $y_j$ terms is non-zero.

Consider the integers $x_1,\ldots,x_k,y_1,\ldots,y_l,c$. The right hand side of \eqref{eq:key_technical_ineq} is invariant under changing the signs of any subset of these integers. Moreover, the left hand side of \eqref{eq:key_technical_ineq} is minimal when all these integers have the same sign, and is invariant under simultaneously replacing all of the integers by their negatives. Hence, it suffices to consider the case $x_1,\ldots,x_k,y_1,\ldots,y_l,c\geq 0$.

Now consider the following inequality
\begin{align}\begin{split}\label{eq:monster_sq_comp}
  a_1x_1^2&-2x_1c + c^2 -2y_1c + b_1y_1^2 + c\\
     &= (a_1-1)x_1^2 -2x_1y_1 + (b_1-1)y_1^2+x_1+y_1 + (x_1+y_1 -c-\frac{1}{2})^2 - \frac{1}{4}\\
&\geq (a_1-1)x_1^2 -2x_1y_1 + (b_1-1)y_1^2+x_1+y_1,
\end{split}\end{align}
where the inequality follows from the observation that the square of a half integer is always at least a quarter. It follows from \eqref{eq:monster_sq_comp} that
\begin{equation}\label{eq:QA_induct}
  Q_{a_k,\dots, a_1,1,b_1, \dots,b_l}(x_l,\ldots,x_1,c,y_1,\ldots,y_l)+|c| \geq Q_{a_k,\dots, a_1-1,b_1-1, \dots,b_l}(\ldots,x_1,y_1,\ldots)+|x_1|+|y_1|,
\end{equation}
where we are using the positivity assumption to write $|x_1|, |y_1|$ and $|c|$ in place of $x_1, y_1$ and $c$. 

We will use \eqref{eq:QA_induct} to prove \eqref{eq:key_technical_ineq} by induction.

Note that the condition $\frac{q_1}{p_1} + \frac{q_2}{p_2} < 1$ implies that at most one of $a_1$ and $b_1$ can equal two.

If $a_1>2$ and $b_1>2$, then Lemma~\ref{lem:qa_bound_1}\eqref{it:alt_lin_chain} applies to show that
\[
Q_{a_k,\dots, a_1-1,b_1-1, \dots,b_l}(x_l,\ldots,x_1,y_1,\ldots,y_l)\geq 2+\sum |x_i|(a_i-2) -x_1+\sum |y_i|(b_i-2)-y_1.
\]
Combining this with \eqref{eq:QA_induct} gives the desired inequality.

Thus it suffices to consider the possibility that $a_1=2$ or $b_1=2$. Without loss of generality we can assume that $a_1=2$. If $k=1$, then Lemma~\ref{lem:qa_bound_1}\eqref{it:lin_chain_c_nonzero} combined with \eqref{eq:QA_induct} gives the desired bound.

Thus, it remains to consider the case that $a_1=2$ and $k>1$. Let 
\[
\frac{p_1'}{q_1'}=[a_2, \dots , a_k]^- 
\quad\text{and}\quad
\frac{p_2'}{q_2'} = [b_1-1,b_2,\dots, b_l]^-.
\]
We wish to show that these satisfy $\frac{q_1'}{p_1'}+\frac{q_2'}{p_2'}<1$. Since $a_1=2$, we have that $\frac{p_1}{q_1}=2-\frac{q_1'}{p_1'}$. We also have that $\frac{p_2'}{q_2'}=\frac{p_2}{q_2} -1$.  The condition that $\frac{q_1}{p_1}+\frac{q_2}{p_2}<1$ implies that $\frac{p_1}{q_1}>\frac{p_2}{p_2-q_2}$. Thus see that
\[
\frac{q_1'}{p_1'} + \frac{q_2'}{p_2'} =
2-\frac{p_1}{q_1}+\frac{q_2}{p_2-q_2}<
2-\frac{p_2}{p_2-q_2}+\frac{q_2}{p_2-q_2} =1,
\]
as required.

This allows us to prove the lemma inductively, by considering
\[
Q_{a_k,\dots, a_2,1,b_1-1,\dots,b_l}(x_k,\dots, x_1,y_1,\dots,y_l)
\]
with $x_1$ taking the role of $c$.
\end{proof}

With these inequalities in place, we can now prove the following, which will be our key result on quasi-alternating lattices.

\begin{lemma}\label{lem:key_inequality}
Let $\Lambda$ be a quasi-alternating lattice associated to a graph $\Gamma$ and let $V\subseteq \Lambda$ the basis elements corresponding to the vertices of $\Gamma$. Then for any non-zero $x=\sum_{v\in V} c_v v$, we have
\[\norm{x} \geq 2 + \sum_{v \in V} |c_v|(\norm{v} -2).\]
\end{lemma}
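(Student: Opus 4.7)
The plan is to reduce the desired bound to the two preparatory inequalities (Lemmas~\ref{lem:qa_bound_1} and~\ref{lem:qa_lb_technical}) by decomposing the quadratic form of $\Lambda$ into two pieces corresponding to the three chains meeting at the central vertex.

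Let $v_0$ denote the central vertex (of weight $e=2$), write $c:=c_{v_0}$, and let $x_1,\dots,x_k$, $y_1,\dots,y_l$, $z_1,\dots,z_m$ denote the coefficients along the three chains with respective weights $a_i$, $b_j$, $c_r$, so that $x_1,y_1,z_1$ are the neighbors of $v_0$. After possibly relabeling the chains, the quasi-alternating hypothesis allows us to assume $\frac{q_1}{p_1}+\frac{q_2}{p_2}<1$, which puts us in the setting where Lemma~\ref{lem:qa_lb_technical} can be applied to the first two chains.

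The key step is to split the central vertex's contribution $2c^2 = c^2+c^2$ so that a direct expansion gives
\[
\norm{x} = Q_{a_k,\dots,a_1,1,b_1,\dots,b_l}(x_k,\dots,x_1,c,y_1,\dots,y_l) + Q_{1,c_1,\dots,c_m}(c,z_1,\dots,z_m),
\]
since each cross term $-2cx_1$, $-2cy_1$, $-2cz_1$ lands in exactly one of the two summands. Assuming at least one of $c, x_1,\dots,x_k, y_1,\dots,y_l$ is nonzero, Lemma~\ref{lem:qa_lb_technical} bounds the first summand below by $2+\sum_i(a_i-2)|x_i|+\sum_j(b_j-2)|y_j|-|c|$, while Lemma~\ref{lem:qa_bound_1}\eqref{it:lin_chain_1} bounds the second summand below by $|c|+\sum_r(c_r-2)|z_r|$. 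Adding these bounds the two $|c|$ terms cancel, giving exactly the desired inequality; note that the central vertex contributes $|c|(2-2)=0$ to the right-hand side, so no extra term is needed.

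The remaining case is $c=x_1=\cdots=x_k=y_1=\cdots=y_l=0$, in which $x$ is supported entirely on the third chain; since $x\neq 0$, some $z_r$ is nonzero, and Lemma~\ref{lem:qa_bound_1}\eqref{it:alt_lin_chain} applied to $Q_{c_1,\dots,c_m}(z_1,\dots,z_m) = \norm{x}$ yields the bound. I do not anticipate a serious obstacle beyond verifying the form decomposition and keeping track of signs; the main subtlety is ensuring that the chains used for Lemma~\ref{lem:qa_lb_technical} are the pair satisfying the quasi-alternating fractional inequality, which is handled by the relabeling at the start.
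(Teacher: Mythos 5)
Your proposal is correct and follows essentially the same route as the paper: the identical decomposition $\norm{x}=Q_{a_k,\dots,a_1,1,b_1,\dots,b_l}(\dots)+Q_{1,c_1,\dots,c_m}(c,z_1,\dots,z_m)$ obtained by splitting $2c^2=c^2+c^2$, with Lemma~\ref{lem:qa_lb_technical} on the first summand and Lemma~\ref{lem:qa_bound_1} on the second. The only cosmetic difference is your case split (some of $c,x_i,y_j$ nonzero versus all zero) in place of the paper's split on $c=0$ versus $c\neq 0$, and both work.
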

\begin{proof}
Suppose that $\Lambda$ is the lattice corresponding to the star-shaped plumbing in Figure~\ref{fig:star_plumbing} with $e=2$ and 
\[
\frac{p_1}{q_1} = [a_1, \dots, a_k]^{-},\quad
\frac{p_2}{q_2} = [b_1, \dots, b_l]^{-} 
\quad\text{and}\quad
\frac{p_3}{q_3} = [c_1, \dots, c_m]^{-}, 
\]
where $a_i,b_i,c_i\geq 2$ and $\frac{q_1}{p_1}+\frac{q_2}{p_2}<1$. Thus if we take
\[
c_v=
\begin{cases}
x_i &\text{$v$ is the $a_i$-weighted vertex}\\
y_i &\text{$v$ is the $b_i$-weighted vertex}\\
z_i &\text{$v$ is the $c_i$-weighted vertex}\\
c   &\text{$v$ is the central vertex,}\\
\end{cases}
\]
then it is not hard to verify that $\norm{x}$ can be calculated as
\begin{equation}\label{eq:Q_splitting}
\norm{x}=
Q_{a_k, \dots, a_1,1,b_1,\dots, b_l}(x_k, \dots, x_1,c,y_1,\dots, y_l)+Q_{1,c_1,\dots,c_m}(c,z_1,\dots, z_m).
\end{equation}

If $c=0$, then \eqref{eq:Q_splitting} simplifies to
\[
\norm{x}=Q_{a_k, \dots, a_1}(x_k, \dots, x_1)+Q_{b_1,\dots, b_l}(y_1,\dots, y_l)+Q_{c_1,\dots,c_m}(z_1,\dots, z_m).
\]
In this case the required inequality follows from Lemma~\ref{lem:qa_bound_1}\eqref{it:alt_lin_chain}.

Thus it suffices to suppose that $c\neq 0$. In this case, we can apply Lemma~\ref{lem:qa_bound_1}\eqref{it:lin_chain_1} to the second summand of \eqref{eq:Q_splitting}. This gives
\[
\norm{x}\geq Q_{a_k, \dots, a_1,1,b_1,\dots, b_l}(x_k, \dots, x_1,c,y_1,\dots, y_l)+ |c| + \sum_{i=1}^m |z_i|(c_i-2).
\]
By applying Lemma~\ref{lem:qa_lb_technical}, we get the desired inequality.
\end{proof}

Lemma~\ref{lem:key_inequality} has several consequences that will be of use later. To describe these consequences we need the following lattice-theoretic concepts.

\begin{mydef}\label{def:irred/unbreak}
Let $\Lambda$ be an integer lattice and let $v\in \Lambda$.
\begin{itemize}
\item The vector $v$ is {\em irreducible} if for all $x,y\in\Lambda$, $v = x + y$ and $x\cdot y \geq 0$ implies either $x=0$ or $y=0$.
\item The vector $v$ is {\em unbreakable} if for all $x,y\in\Lambda$, $v = x + y$ and $x\cdot y =-1$ implies either $\norm{x}=2$ or $\norm{y}=2$.
\end{itemize}
\end{mydef}

\begin{lemma}\label{lem:key_properties}
Let $\Lambda$ be a quasi-alternating lattice associated to a graph $\Gamma$ and let $V\subseteq \Lambda$ the basis elements corresponding to the vertices of $\Gamma$. Then following are true:
\begin{enumerate}[(i)]
\item\label{it:no_norm_1} If $x\in \Lambda$ is non-zero, then $\norm{x}\geq 2$.
\item\label{it:norm_bound} If $x=\sum_{v\in V} c_v v$, then if $c_w\neq 0$ for some $w\in V$, then $\norm{x}\geq \norm{w}$.
\item\label{it:irred} Any vertex $v\in V$ is irreducible.
\item\label{it:unbreakable} Any vertex $v\in V$ is unbreakable.
\end{enumerate}
\end{lemma}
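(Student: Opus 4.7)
The plan is to derive all four parts directly from Lemma~\ref{lem:key_inequality}, which I will call the \emph{key inequality}. For \eqref{it:no_norm_1} I just use that every vertex $w\in V$ has $\norm{w}\ge 2$, so the right-hand side of the key inequality is at least $2$. For \eqref{it:norm_bound}, I pick out the single $w$-term from the sum: since $|c_w|\ge 1$, the key inequality yields $\norm{x}\ge 2+(\norm{w}-2)=\norm{w}$.

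For the remaining two parts I fix a vertex $v\in V$, write $v=x+y$, and denote the coordinates of $x$ and $y$ in the vertex basis by $c_w^x,c_w^y$. The essential observation is that $c_v^x+c_v^y=1$ while $c_w^x+c_w^y=0$ for $w\neq v$, so the triangle inequality gives $|c_v^x|+|c_v^y|\ge 1$. Applying the key inequality to each of $x$ and $y$ (assuming both are non-zero) and summing produces what I will call the \emph{master bound}
\[
\norm{x}+\norm{y}\;\ge\;4+\sum_{w\in V}\bigl(|c_w^x|+|c_w^y|\bigr)(\norm{w}-2)\;\ge\;4+(\norm{v}-2)\;=\;\norm{v}+2.
\]

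To prove \eqref{it:irred} I assume $x\cdot y\ge 0$ and both $x,y$ non-zero, so $\norm{v}=\norm{x}+2x\cdot y+\norm{y}\ge\norm{x}+\norm{y}\ge\norm{v}+2$, which is a contradiction. For \eqref{it:unbreakable} I assume $x\cdot y=-1$ (so $x,y\neq 0$ automatically) and, for contradiction, that $\norm{x},\norm{y}\ge 3$. Then $\norm{x}+\norm{y}=\norm{v}+2$, so equality must hold at every step of the master bound. From $|c_v^x|+|c_v^y|=1$ together with $c_v^x+c_v^y=1$ and integrality I conclude, up to swapping $x$ and $y$, that $c_v^x=1$ and $c_v^y=0$. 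From $|c_w^x|+|c_w^y|=0$ for every $w\ne v$ with $\norm{w}>2$ I then see that $y$ is supported on norm-$2$ vertices only, and equality in the key inequality applied to $y$ forces $\norm{y}=2$, contradicting $\norm{y}\ge 3$.

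The only real obstacle is the equality analysis in \eqref{it:unbreakable}; everything else is straightforward bookkeeping with the inequality from Lemma~\ref{lem:key_inequality}, combined with the elementary observation that splitting a basis vertex $v$ into two pieces must leave a total coefficient of $1$ at $v$.
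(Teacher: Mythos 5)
Your proof is correct, and parts (i)--(iii) are essentially what the paper does (everything flows from Lemma~\ref{lem:key_inequality}; irreducibility from positivity of $x\cdot y$ against the splitting of $v$). Where you genuinely diverge is in (iii)/(iv): the paper argues asymmetrically, noting that since the vertices form a basis, one of the two summands of $v=x+y$ — say $x$ — has nonzero coefficient at $v$, so part (ii) already gives $\norm{x}\geq\norm{v}$; plugging this into $\norm{v}=\norm{x}+2(x\cdot y)+\norm{y}$ yields $0\leq\norm{y}\leq-2(x\cdot y)$, from which irreducibility ($x\cdot y=0\Rightarrow y=0$) and unbreakability ($x\cdot y=-1\Rightarrow\norm{y}=2$ by (i)) both drop out with no equality analysis. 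Your symmetric ``master bound'' $\norm{x}+\norm{y}\geq\norm{v}+2$ buys a very quick proof of irreducibility, but it forces you into the equality-case bookkeeping for unbreakability, which is where the only real care is needed: your deduction of $|c_v^x|+|c_v^y|=1$ from equality in the middle step is only valid when $\norm{v}>2$, since for $\norm{v}=2$ that term vanishes identically. This is not a genuine gap — under your contradiction hypothesis $\norm{x},\norm{y}\geq 3$ you have $\norm{v}+2=\norm{x}+\norm{y}\geq 6$, so $\norm{v}\geq 4>2$ automatically — but you should state this one line explicitly. You also implicitly use that equality of the summed bound forces equality in the key inequality for $x$ and for $y$ separately; that is fine since both inequalities point the same way, but it is worth a clause. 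Overall: your route is sound and self-contained; the paper's is shorter because the asymmetric use of (ii) sidesteps the equality analysis entirely.
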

\begin{proof}
The statements \eqref{it:no_norm_1} and \eqref{it:norm_bound} follow immediately from Lemma~\ref{lem:key_inequality}.

Suppose that a vertex $v$ can be written as $v=x+y$ for $x,y \in \Lambda$. If we write $x=\sum c_w w$ and $y=\sum d_w w$, then since the vertices are a basis for $\Lambda$, we see that we must have $c_v\neq 0$ or $d_v\neq 0$. Without loss of generality assume that $c_v\neq 0$. Thus by \eqref{it:norm_bound}, $\norm{x}\geq \norm{v}$.
However, we also have 
\[\norm{v}=\norm{x+y}=\norm{x}+2(x\cdot y) +\norm{y},\]
showing that
\[0\leq \norm{y}\leq -2(x\cdot y).\]
Thus if $x\cdot y\geq 0$, then $\norm{y}=0$ implying that $y=0$. This shows irreducibility. If $x\cdot y=-1$, then $y\neq 0$ and $\norm{y}\leq 2$. By \eqref{it:no_norm_1} this means $\norm{y}=2$. Thus we have shown unbreakability.
\end{proof}

The following observation will also be useful.

\begin{lemma}\label{lem:same_sign} Let $\Lambda$ be a quasi-alternating lattice with vertex basis $V$. If $x = \sum_{v \in V} c_v v \in \Lambda$ is irreducible, then we have $c_v \geq 0$ for all $v$ or $c_v\leq 0$ for all $v$. 
\end{lemma}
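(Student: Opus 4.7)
The strategy would be to prove the contrapositive: assuming some $c_v > 0$ and some $c_w < 0$, I would exhibit a non-trivial decomposition $x = a+b$ with $a \cdot b \geq 0$, contradicting the irreducibility of $x$. Specifically, I would split $x$ into its positive and negative parts with respect to the vertex basis,
$$x_+ = \sum_{v \in V,\, c_v > 0} c_v\, v, \qquad x_- = \sum_{v \in V,\, c_v < 0} (-c_v)\, v,$$
both of which lie in $\Lambda$, are non-zero by assumption, and are supported on disjoint subsets of $V$. Setting $a = x_+$ and $b = -x_-$ would then give $x = a + b$ with both $a, b \neq 0$.

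The crux would be checking $a \cdot b \geq 0$. Expanding,
$$a \cdot b \;=\; -\,x_+ \cdot x_- \;=\; -\!\!\!\sum_{\substack{u, w \in V\\ c_u > 0,\, c_w < 0}} c_u\, (-c_w)\, (u \cdot w).$$
Since $u$ and $w$ come from disjoint subsets of $V$, every term in the sum has $u \neq w$, and the definition of $Q_\Gamma$ immediately gives $u \cdot w \in \{0, -1\}$. Each summand is therefore non-positive, so $x_+ \cdot x_- \leq 0$ and hence $a \cdot b \geq 0$, contradicting the irreducibility of $x$. This forces one of $x_+$ or $x_-$ to vanish, so all non-zero $c_v$ share a sign.

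I do not anticipate a serious obstacle here: the argument is essentially formal and relies only on the fact that the off-diagonal entries of the Gram matrix of the vertex basis are non-positive, which is immediate from the definition of $Q_\Gamma$. In particular the quasi-alternating hypothesis is not used, and the same conclusion holds for the vertex basis of any plumbing graph; the only mild subtlety is keeping track of signs so that the pairing $a \cdot b$ indeed comes out non-negative after the sign flip from $b = -x_-$.
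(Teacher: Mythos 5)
Your proof is correct and follows essentially the same route as the paper: both split $x$ into its positive and negative parts with respect to the vertex basis, use that distinct vertices pair to $0$ or $-1$ to see the cross term is non-negative, and then invoke irreducibility. Your observation that the quasi-alternating hypothesis is not needed is also accurate, since the paper's argument likewise uses only the non-positivity of the off-diagonal entries of $Q_\Gamma$.
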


\begin{proof}
Let $P = \{v \in V : c_v > 0\}$ and $N = \{v \in V : c_v < 0\}$  and let $w_+ = \sum_{v \in P} c_v v$ and $w_- = \sum_{v \in N} c_v v$. We have $x=w_+ + w_-$ and $w_+ \cdot w_- \geq 0$. Since $x$ is irreducible this implies that $x=w_+$ or $x=w_-$, proving that the $c_v$ must all have the same sign, as required.
\end{proof}

\section{Changemaker lattices}\label{sec:CM_lattices}
In this section we recall the changemaker theorem and the properties of changemaker lattices. The changemaker theorem was first developed by Greene for integer surgeries in his work on the lens space realization problem \cite{GreeneLRP} and the cabling conjecture \cite{greene2010space}, and for half-integer surgeries in his work on 3-braid knots with unknotting number one \cite{Greene3Braid}. It was extended to general non-integer slopes by Gibbons \cite{gibbons2013deficiency}. A proof of the changemaker theorem at the level of generality stated here can be found in the second author's thesis \cite{mccoy2016thesis}.

The changemaker theorems are obstructions to manifolds arising by positive surgery and bounding sharp negative definite manifolds.  Recall that given a negative-definite manifold $X$ with $\partial X=Y$ equipped with a \spinc -structure $\spincs$ which restricts to $\spinct$ on $Y$, there is an upper bound \cite{Ozsvath03Absolutely}:
\begin{equation}\label{eq:d_inv_bound}
d(Y,\spinct)\geq \frac{c_1(\spincs)^2 + b_2(X)}{4},
\end{equation}
where $d(Y,\spinct)$ denotes the $d$-invariant from Heegaard Floer homology. A sharp manifold is one for which \eqref{eq:d_inv_bound} is sufficient to determine all $d$-invariants on the boundary.
\begin{mydef}\label{def:sharp}
A negative definite manifold $X$ with boundary $Y$ is sharp, if for every $\spinct \in \spinc(Y)$ there is $\spincs\in \spinc(X)$ such that $\spincs$ restricts to $\spinct$ and $\spincs$ attains equality in \eqref{eq:d_inv_bound}, that is:
\[d(Y,\spinct)= \frac{c_1(\spincs)^2 + b_2(X)}{4}.\]
\end{mydef}

\begin{mydef}\label{def:CM_condition} We say that a tuple of increasing positive integers $(\sigma_1, \dots, \sigma_t)$ satisfies the {\em changemaker condition}, if for every
\[1\leq n \leq \sigma_1 + \dots + \sigma_t,\]
there is $A\subseteq \{ 1, \dots , t\}$ such that $n=\sum_{i\in A} \sigma_i$.
\end{mydef}
The changemaker has the following equivalent formulation which will sometimes be useful.
\begin{prop}[Brown, \cite{brown1961note}]\label{prop:CMcondition}
A tuple $(\sigma_1, \dots , \sigma_t)$ of increasing positive integers satisfy the changemaker condition if and only if
\[\sigma_1 = 1, \quad\text{and}\quad \sigma_i \leq \sigma_1 + \dots + \sigma_{i-1} +1,\text{ for } 1<i\leq t.\]
\end{prop}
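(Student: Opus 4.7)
The plan is to prove the two implications separately by essentially elementary arguments; this is a well-known combinatorial characterization, so I do not expect any serious obstacle, only the need to set up a clean induction.

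For the forward direction, I would argue by contrapositive. Suppose the inequality $\sigma_i \leq \sigma_1 + \cdots + \sigma_{i-1} + 1$ fails for some $i$ (or $\sigma_1 \neq 1$, in which case $n=1$ is already unrepresentable). Pick the smallest such $i$ and consider the target value
\[
n = \sigma_1 + \cdots + \sigma_{i-1} + 1.
\]
Since $\sigma_i, \sigma_{i+1}, \ldots, \sigma_t$ are all strictly greater than $n$, any representing subset $A$ must lie in $\{1, \ldots, i-1\}$; but then $\sum_{j \in A}\sigma_j \leq \sigma_1 + \cdots + \sigma_{i-1} < n$, contradicting the changemaker condition. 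The case $\sigma_1 \geq 2$ is the special instance $i = 1$ of this argument.

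For the reverse direction, I would induct on $t$. The base case $t=1$ forces $\sigma_1 = 1$, and the only $n$ to check is $n=1$, represented by $A = \{1\}$. For the inductive step, assume the claim for $(\sigma_1, \ldots, \sigma_{t-1})$ and take $1 \leq n \leq \sigma_1 + \cdots + \sigma_t$. If $n \leq \sigma_1 + \cdots + \sigma_{t-1}$ the inductive hypothesis immediately produces a subset $A \subseteq \{1, \ldots, t-1\}$ with $n = \sum_{j \in A}\sigma_j$. Otherwise $n > \sigma_1 + \cdots + \sigma_{t-1}$, and I would subtract off $\sigma_t$: by hypothesis $\sigma_t \leq \sigma_1 + \cdots + \sigma_{t-1} + 1 \leq n$, so $0 \leq n - \sigma_t \leq \sigma_1 + \cdots + \sigma_{t-1}$. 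If $n - \sigma_t = 0$, take $A = \{t\}$; otherwise apply the inductive hypothesis to write $n - \sigma_t = \sum_{j \in A'}\sigma_j$ with $A' \subseteq \{1, \ldots, t-1\}$, and take $A = A' \cup \{t\}$. Either way $n$ is represented, completing the induction.

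The only place care is needed is ensuring the inequalities $\sigma_t \leq n$ and $n - \sigma_t \leq \sigma_1 + \cdots + \sigma_{t-1}$ are used correctly in the inductive step, but both are immediate from the hypotheses. No further machinery is required.
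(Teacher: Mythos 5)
Your proof is correct: the contrapositive argument with the target value $n=\sigma_1+\cdots+\sigma_{i-1}+1$ for necessity, and the induction on $t$ with the case split on whether $n$ exceeds $\sigma_1+\cdots+\sigma_{t-1}$ (subtracting $\sigma_t$ when it does) for sufficiency, both go through without gaps. Note that the paper itself gives no proof of this proposition --- it is quoted directly from Brown's note \cite{brown1961note} --- and your argument is essentially the standard (indeed Brown's original) elementary one, so there is nothing in the paper to compare it against beyond the citation.
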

The key definition we will need is that of a changemaker lattice.
\begin{mydef}\label{def:CMlattice}
Let $p/q>0$ be given by the continued fraction,
\[p/q=[a_0, \dots, a_l]^-=
a_0 -
    \cfrac{1}{a_1
        - \cfrac{1}{\ddots
            - \cfrac{1}{a_l} } },\]
where $a_0\geq 1$ and $a_i\geq 2$ for $i\geq 1$.
Suppose further that there is  $\{f_0, \dots f_s, e_1, \dots, e_t\}$, an orthonormal basis of $\Z^{t+s+1}$, where $s=\sum_{i=1}^l (a_i-1)$. Let $w_0, \dots, w_l \in \Z^{s+t+1}$ be such that:
\begin{enumerate}[(I)]
\item $w_0$ has norm $\norm{w_0}=a_0$ and takes the form
\[
w_0=
\begin{cases}
\sigma_1 e_1 + \dots + \sigma_t e_t &\text{if $l=0$}\\
f_0 + \sigma_1 e_1 + \dots + \sigma_t e_t &\text{if $l>0$,}
\end{cases}
\]
where $(\sigma_1, \dots, \sigma_t)$ is a tuple satisfying the changemaker condition,
\item for $k\geq 1$,
\[w_k=-f_{\alpha_{k-1}} + f_{\alpha_{k-1}+1}+\dots + f_{\alpha_{k}},\]
where
$\alpha_0=0$ and $\alpha_k=\sum_{i=1}^k (a_i-1)$.
\end{enumerate}
Then we say that the orthogonal complement
\[L=\langle w_0, \dots, w_l \rangle^\bot \subseteq \Z^{s+t+1}\]
is a {\em $p/q$-changemaker lattice}.

Moreover, we say that the $\sigma_i$ are the {\em changemaker coefficients} of $L$ and that the $\sigma_i$ satisfying $\sigma_i>1$ are the {\em stable coefficients} of $L$.
\end{mydef}
Some remarks on this definition are in order.
\begin{remark} The following observations will be useful.
\begin{enumerate}
\item As the $\alpha_i$ are defined so that $\alpha_{i}-\alpha_{i-1}=a_i-1$, the $w_i$ satisfy
\[
w_i\cdot w_j =
\begin{cases}
a_i &\text{if $i=j$,}\\
-1 &\text{if $|i-j|=1$,}\\
0 &\text{if $|i-j|>1$.}
\end{cases}
\]
\item By definition, we have $\alpha_l=s$. Thus for every $0\leq j\leq s$ there is $w_k$ with $w_k\cdot f_j=1$. As $w_0\cdot e_i>0$, for every $1\leq i\leq t$, this shows that there are no vectors of norm one in a changemaker lattice.
\item A $p/q$-changemaker lattice is determined up to isomorphism by its stable coefficients. Given the stable coefficients, the remaining changemaker coefficients are all equal to one and the number of remaining coefficients is determined by the requirement that $a_0=\norm{w_0}=\lceil p/q \rceil$. All other $w_i$ are determined by the continued fraction expansion for $p/q$.
\end{enumerate}
\end{remark}

We are now ready to state the changemaker surgery obstruction.
\begin{thm}[Theorem~2.1, \cite{mccoy2016thesis}]\label{thm:CM}
Let $K\subseteq S^3$ such that $S_{p/q}^3(K)$ bounds a sharp manifold $X$ for $p/q>0$. Then the intersection form $Q_X$ satisfies
\[-Q_X\cong L \oplus \Z^S,\]
where $S\geq 0$ is an integer and
\[L=\langle w_0, \dots, w_l \rangle^\bot\subseteq \Z^{s+t+1},\]
is a $p/q$-changemaker lattice such that for all $0\leq i \leq n/2$,
\begin{equation}\label{eq:w0formula2}
8V_{i} = \min_{\substack{ |c\cdot w_0|= n-2i \\ c \in \Char(\Z^{s+t+1})}} \norm{c} - (s+t+1),
\end{equation}
where $n=\lceil p/q \rceil$.\qed
\end{thm}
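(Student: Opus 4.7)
The plan is to follow the standard adaptation of Greene's changemaker technique, extended to general non-integer slopes by Gibbons \cite{gibbons2013deficiency}; a self-contained proof at this level of generality appears in \cite{mccoy2016thesis}. First, expanding $p/q = [a_0, a_1, \ldots, a_l]^-$, there is a standard positive-definite 4-manifold $P$ with $\partial P = S^3_{p/q}(K)$, built from $B^4$ by attaching $l+1$ 2-handles: one along $K$ with framing $a_0$ and a chain of unknots with framings $a_1, \ldots, a_l$ realising the continued fraction expansion. Gluing the sharp (hence negative definite) manifold $X$ to $-P$ along $S^3_{p/q}(K)$ and capping the remaining $S^3$ boundary component with $B^4$ produces a closed negative definite 4-manifold $\widehat{W}$.

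Next, by Donaldson's diagonalization theorem, $H_2(\widehat{W}) \cong \Z^N$ with the standard negative diagonal form, where $N = b_2(X) + l + 1$. Let $w_0, \ldots, w_l \in \Z^N$ be the images of the 2-handle classes of $-P$. The inclusion $H_2(X) \hookrightarrow H_2(\widehat{W})$ identifies $H_2(X)$ with $\langle w_0, \ldots, w_l \rangle^\perp$, and $-Q_X$ equals the standard positive form restricted to this sublattice. An orthogonal change of basis of $\Z^N$ brings the $w_k$ for $k \geq 1$ into the form of Definition~\ref{def:CMlattice}(II); splitting off any $\Z^S$ summand of norm-one vectors in the complement then brings $w_0$ into the form of Definition~\ref{def:CMlattice}(I), with the coefficients $\sigma_i$ at this stage known only to be positive integers.

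To obtain \eqref{eq:w0formula2}, compute the $d$-invariants of $S^3_{p/q}(K)$ in two ways. On the one hand, the Ozsv\'ath-Szab\'o surgery formula expresses them in terms of $d(L(p,q),\cdot)$ and $V_i$. On the other hand, the sharp condition on $X$ combined with the embedding in $\Z^N$ yields an expression in terms of minimum $\norm{c}$ over characteristic vectors $c \in \Char(\Z^N)$ restricting to a given $\mathfrak{t}\in\spinc(S^3_{p/q}(K))$; a careful enumeration of spin$^c$-structures shows these restriction conditions are precisely $|c \cdot w_0| = n - 2i$. Computing the lens space contribution $d(L(p,q))$ from the same framework applied to the sub-lattice spanned by $w_1, \dots, w_l$ (which is itself sharp for $L(p,q)$) and combining the two expressions yields \eqref{eq:w0formula2}.

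Finally, the main technical point is to deduce that $(\sigma_1, \ldots, \sigma_t)$ satisfies the changemaker condition. The key input is the monotonicity $V_i - V_{i+1} \in \{0,1\}$ of the Ozsv\'ath-Szab\'o invariants. Through \eqref{eq:w0formula2}, this monotonicity forces every integer in $[-\lone{w_0}, \lone{w_0}]$ of the appropriate parity to be realised as $c \cdot w_0$ for some characteristic vector $c$ attaining the minimum norm. Expanding $w_0 = f_0 + \sum_i \sigma_i e_i$ (or $w_0 = \sum_i \sigma_i e_i$ when $l=0$) in the orthonormal basis, this equivalently demands that every integer in $[0, \sigma_1 + \cdots + \sigma_t]$ be expressible as a subset sum of the $\sigma_i$, which is precisely the changemaker condition. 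This last combinatorial translation is the delicate step and is the piece that required new input to pass from Greene's integer case to arbitrary non-integer $p/q$.
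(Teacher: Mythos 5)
You should first be aware that the paper does not prove Theorem~\ref{thm:CM} at all: it is imported from the literature (Greene for the integer and half-integer cases, Gibbons for general non-integer slopes, with a complete account in the second author's thesis \cite{mccoy2016thesis}), so there is no internal proof to compare against. Your outline does follow the strategy of those sources: glue the sharp negative definite manifold to the reversed positive definite $2$-handle cobordism realizing $p/q=[a_0,\dots,a_l]^-$, apply Donaldson's diagonalization theorem, identify $-Q_X$ with the orthogonal complement of the handle classes $w_0,\dots,w_l$, and convert sharpness plus the Ozsv\'ath--Szab\'o surgery formula into the identity \eqref{eq:w0formula2}, from which the changemaker condition on the $\sigma_i$ follows from the monotonicity $V_0\geq V_1\geq\dots\geq 0$ essentially as in Greene's original argument.

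As a proof, however, your sketch has a genuine gap at what is in fact the hard point of the non-integer case. You assert that ``an orthogonal change of basis of $\Z^N$ brings the $w_k$ for $k\geq 1$ into the form of Definition~\ref{def:CMlattice}(II)'' and that splitting off norm-one vectors then puts $w_0$ into form (I). This is not a formal consequence of the Gram matrix of $w_0,\dots,w_l$ together with Donaldson's theorem: a priori the chain vectors can embed in $\Z^N$ in non-standard ways, and even when the chain is standard, $w_0$ could pair nontrivially with several $f_j$ (for instance a summand $f_1-f_2$ is orthogonal to $f_1+f_2$-type pieces of $w_1$) while still realizing the prescribed pairings $w_0\cdot w_1=-1$, $w_0\cdot w_k=0$. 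Establishing the precise normal form --- $w_0\cdot f_0=1$, $w_0\cdot f_j=0$ for $j>0$, nonnegative $e$-coefficients, and the $w_k$ occupying consecutive blocks --- is exactly where Gibbons' analysis of $d$-invariant (deficiency) symmetries of $S^3_{p/q}(K)$, equivalently the careful matching of spin$^c$ labels that you defer to ``a careful enumeration,'' is used; it cannot be achieved by a change of basis alone. Relatedly, you locate the novelty beyond Greene's case in the final combinatorial step, but that step (a non-realized value of $c\cdot w_0$ would force some $V_i$ to exceed a $V_j$ with $j<i$, contradicting monotonicity) is essentially unchanged from the integer case; the new content is precisely the normalization and spin$^c$ bookkeeping you have compressed. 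Minor slips: the positive definite trace $P$ already contains $B^4$, so there is no residual $S^3$ boundary to cap; and sharpness is a property of a $4$-manifold (here the linear plumbing bounding the lens space), not of the sublattice spanned by $w_1,\dots,w_l$.
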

Here the $V_i$ are a non-increasing sequence of non-negative integers that are determined by the knot Floer complex $CFK^\infty$ of $K$.
\begin{remark}
It is clear from \eqref{eq:w0formula2} that the vector $w_0$ determines the $V_i$. It turns out that the sequence of $V_i$ along with equation \eqref{eq:w0formula2} is sufficient to determine the stable coefficients of $w_0$ \cite{mccoy2014bounds}. In particular, this means that the intersection form $Q_X$ is determined by the knot, the surgery slope $p/q$ and the second Betti number of $X$.
\end{remark}
In the case where $K$ is an $L$-space knot\footnote{A knot with positive $L$-space surgeries} the $V_i$ can be computed from the Alexander polynomial. For an $L$-space knot we may write its Alexander polynomial in the form
\[\Delta_K(t) = a_0 + \sum_{i=1}^g a_i(t^i+t^{-i}),\]
where $g=g(K)$ is the genus of $K$ and the non-zero values of the $a_i$ alternate in sign and take values $a_i=\pm 1$. We also assume that $\Delta_K(1)=1$. With these conventions, we define the torsion coefficients of $\Delta_K(t)$ to be
\[t_i(K)= \sum_{j\geq 1} ja_{|i|+j}.\]
For $K$ an $L$-space knot we have that $V_i=t_i(K)$.
   
\begin{remark}\label{rem:Alexander_recovery}
The torsion coefficients are sufficient to determine the Alexander polynomial. For $j\geq 1$, we can recover $a_j$ by the relation
\[a_j=t_{j-1}(K)-2t_j(K)+t_{j+1}(K).\]
Since we are normalizing so that $\Delta_K(1)=1$, this is also sufficient to recover $a_0$.
\end{remark}

When applied to Seifert fibered surgeries, Theorem~\ref{thm:CM} yields the following.
\begin{lemma}\label{lem:CM_thm_plumbings}
Let $Y=S^2(2; \frac{p_1}{q_1},\frac{p_2}{q_2},\frac{p_3}{q_3})$ be a Seifert fibered space bounding positive-definite plumbed 4-manifold $X_\Gamma$ such that $Y\cong S_{-p/q}^3(K)$ for some $K\subseteq S^3$ and $p/q>0$. Then $Y$ is an $L$-space and $Q_\Gamma \cong L$, where $L$ is the $p/q$-changemaker lattice determined by the Alexander polynomial of $\Delta_K(t)$.   
\end{lemma}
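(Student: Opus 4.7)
The plan is to invoke Theorem~\ref{thm:CM} applied to the mirror of $K$ after reorienting so that the surgery slope becomes positive, and then use the quasi-alternating structure of $\Gamma$ to remove the $\Z^S$-factor and to ensure $Y$ is an L-space. Since $p/q>0$, the hypothesis $Y\cong S^3_{-p/q}(K)$ is equivalent to $-Y\cong S^3_{p/q}(\bar K)$, where $\bar K$ is the mirror. The oppositely oriented plumbing $-X_\Gamma$ bounds $-Y$, is negative definite, and has the central vertex (weight $-2$, degree $3$) as its unique bad vertex, while every leg vertex has weight $\le -2$ and degree $\le 2$. By Ozsv\'ath--Szab\'o's theorem on plumbings with at most one bad vertex, $-X_\Gamma$ is a sharp $4$-manifold.

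Next I would verify that $\Gamma$ is quasi-alternating. Reversing the orientation of the standard $-p/q$-surgery handlebody of $K$ produces a negative-definite $4$-manifold $W$ with $H_1(W)=0$ and $\partial W = Y$, so condition (i) of Lemma~\ref{lem:QAchar} is satisfied. The equivalent conditions then give two pieces of information: condition (iii) supplies the inequality $\tfrac{q_i}{p_i}+\tfrac{q_j}{p_j}<1$ defining the quasi-alternating hypothesis on $\Gamma$, and condition (ii) identifies $Y$ with the double branched cover of a quasi-alternating Montesinos link. In particular, $Y$ is an L-space.

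With these preparations, applying Theorem~\ref{thm:CM} to $\bar K$, slope $p/q$, and the sharp manifold $-X_\Gamma$ yields an isometry $Q_\Gamma \cong L \oplus \Z^S$, where $L$ is a $p/q$-changemaker lattice and $S\ge 0$. Lemma~\ref{lem:key_properties}(i) forbids norm-one vectors in the quasi-alternating lattice $\Lambda_\Gamma$, while $\Z^S$ contains $S$ orthonormal norm-one vectors, so $S=0$ and $Q_\Gamma \cong L$.

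Finally, because $-Y=S^3_{p/q}(\bar K)$ is an L-space realized by a positive surgery on $\bar K$, the knot $\bar K$ is an L-space knot, so $V_i(\bar K)=t_i(\bar K)$. Since the Alexander polynomial is symmetric, $t_i(\bar K)=t_i(K)$, so the torsion coefficients are read off from $\Delta_K(t)$. By the remark following Theorem~\ref{thm:CM}, the sequence $\{V_i\}$ together with $p/q$ determines the stable coefficients of $L$, and hence $L$ itself up to isomorphism, proving the lemma. The most substantive input is sharpness of $-X_\Gamma$, imported from Ozsv\'ath--Szab\'o; everything else amounts to carefully tracking orientations so that the positivity-of-slope hypothesis in Theorem~\ref{thm:CM} is met and assembling the lattice-theoretic consequences already developed in Sections~\ref{sec:SF_plumbings}~and~\ref{sec:CM_lattices}.
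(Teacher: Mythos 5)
Your argument is correct and follows essentially the same route as the paper: condition (i) of Lemma~\ref{lem:QAchar} from a negative-definite filling supplied by the negative slope, sharpness of $-X_\Gamma$ from Ozsv\'ath--Szab\'o, Theorem~\ref{thm:CM} applied to the mirror $\overline{K}$ with slope $p/q$, and Lemma~\ref{lem:key_properties}(i) to exclude norm-one vectors and force $S=0$, with the L-space-knot/torsion-coefficient discussion just making explicit what the paper states tersely. One small bookkeeping point: the trace of the $-p/q$-surgery built from a negative continued-fraction expansion is already negative definite with boundary $Y$, so no orientation reversal is needed there (reversing it would give a filling of $-Y$); equivalently, you could reverse the positive-definite trace of $p/q$-surgery on $\overline{K}$.
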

\begin{proof}
Since $Y$ arises by surgery of a negative slope, $Y$ bounds a negative definite manifold $W$ with $H_1(W;\Z) = 0$. Combined with the positive definite plumbing, this shows that $Y$ satisfies condition \eqref{it:neg_def} of Lemma~\ref{lem:QAchar}. Consequently, $Y$ satisfies the other conditions of Lemma~\ref{lem:QAchar}. This shows that $Y$ the double branched cover of a quasi-alternating link and, consequently, is an $L$-space.

Reversing orientations shows that $-Y\cong S^3_{p/q}(\overline{K})$. 
Ozsv{\'a}th and Szab{\'o} have shown that the negative definite plumbing $-X_\Gamma$ is a sharp 4-manifold \cite[Corollary~1.5]{Ozsvath03plumbed}. Since the intersection form of $-X_\Gamma$ is isomorphic to $-Q_\Gamma$, Theorem~\ref{thm:CM} applies to show that $Q_\Gamma$ is isomorphic to $L \oplus \Z^S$ for some $S\geq 0$, where $L$ is the $p/q$-changemaker lattice whose stable coefficients are determined by the Alexander polynomial of $K$. However, since $Y$ satisfies the conditions of Lemma~\ref{lem:QAchar}, the results of Lemma~\ref{lem:key_properties} apply to $Q_\Gamma$. This shows in particular that $Q_\Gamma$ contains no vectors of norm one and hence that $S=0$, as required.
\end{proof}

\subsection{Standard bases}\label{sec:standard_bases}
Having stated the changemaker surgery obstruction, we now discuss the properties of changemaker lattices that will be required. We begin first by constructing a basis for a $p/q$-changemaker lattice. See also \cite{mccoy2014noninteger}, \cite{mccoy2016thesis}. Let
\[L=\langle w_0, \dots, w_l \rangle^\bot \subseteq \Z^{s+t+1}\]
be a $p/q$-changemaker lattice for $\frac{p}{q}=n-\frac{r}{q}$ for $n>1$ and $1\leq r <q$. Let
\[w_0=f_0+\sigma_1 e_1 + \dots + \sigma_t e_t\]
and $0=\alpha_0< \dots< \alpha_l=s$ be as in the definition of $L$. Consider the set
\[M=\{0, \dots, s\}\setminus \{\alpha_1, \dots, \alpha_{l-1}\}.\]
Write $M$ as
\[M=\{\beta_0, \dots, \beta_m\},\]
where the $\beta_i$ are ordered to be increasing. Notice that $\beta_0=0$ and $\beta_m=\alpha_l=s$. For $0\leq k<m$ define
\[\mu_k=
\begin{cases}
f_0+\dots + f_{\beta_1} &\text{if $k=0$}\\
-f_{\beta_{k}}+f_{\beta_{k}+1}+\dots + f_{\beta_{k+1}} &\text{if $k>0$.}
\end{cases}
\]
These are constructed so that $\mu_k\in L$ for $k>0$. By construction the $\mu_i$ pair as follows:
\begin{equation}\label{eq:mu_i_pairings}
\mu_i \cdot \mu_j =
\begin{cases}
\norm{\mu_i} &\text{if $i=j$}\\
-1 			&\text{if $|i-j|=1$}\\
0			&\text{if $|i-j|>1$.}
\end{cases}
\end{equation}
In particular this means for any $0\leq a \leq b \leq m$ the following holds:
\begin{equation}\label{eq:norm_computation}
\norm{\mu_a + \dots + \mu_b}=2+ \sum_{i=a}^b (\norm{\mu_i}-2).
\end{equation}

It will also be useful to note that the $\mu_i$ are determined by $r/q$ by the following continued fraction identity.
\begin{lemma}[Lemma~4.8, \cite{mccoy2016thesis}]\label{lem:mui_cont_frac}
The $\mu_i$ satisfy
\[[\norm{\mu_0}, \dots, \norm{\mu_m}]^- = \frac{q}{q-r}\]
\qed
\end{lemma}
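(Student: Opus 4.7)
The plan is to prove the identity by induction on $T = \sum_{i=1}^{l}(a_i - 2) \geq 0$, peeling off any leading $2$'s in the continued fraction at each step. First, I would observe from the definition of $\mu_k$ that $\norm{\mu_k} = \beta_{k+1} - \beta_k + 1$ for each $k$ (using the convention $\beta_0 = 0$), so the norm sequence records the gaps between consecutive elements of $M$.

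For the base case $T = 0$, all $a_i = 2$, forcing $\alpha_i = i$, $s = l$, and $M = \{0, l\}$. Then $m = 1$, $\mu_0 = f_0 + f_1 + \cdots + f_l$, and $\norm{\mu_0} = l+1$. Since $q/r = [2,\ldots,2]^- = (l+1)/l$ for $l$ twos, one has $q/(q-r) = l+1 = [\norm{\mu_0}]^-$, verifying the base case.

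For the inductive step I would let $K \geq 0$ denote the number of leading $2$'s. Since $T \geq 1$, we have $K < l$ and $a_{K+1} \geq 3$, so the reduced sequence $a' = (a_{K+1}-1, a_{K+2}, \ldots, a_l)$ of length $l-K$ satisfies $a'_i \geq 2$ and $T' = T - 1$. A direct verification using $\alpha_i = i$ for $i \leq K$ shows that $M = \{0\} \cup (M' + (K+1))$, so that $\norm{\mu_0} = K+2$ and $\norm{\mu_k} = \norm{\mu'_{k-1}}$ for $k \geq 1$. Applying the inductive hypothesis to $a'$ then gives
\[
[\norm{\mu_0}, \norm{\mu_1}, \ldots]^- = (K+2) - \frac{q' - r'}{q'} = \frac{(K+1)q' + r'}{q'}.
\]

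To finish, I would verify the two continued fraction identities $q = (K+1)q' + r'$ and $q - r = q'$. Setting $\tilde p/\tilde q = [a_{K+1}, \ldots, a_l]^-$, the standard identity $[\underbrace{2,\ldots,2}_K, x]^- = ((K+1)x - K)/(Kx - (K-1))$ (itself provable by a one-line induction on $K$) yields $q = (K+1)\tilde p - K\tilde q$ and $r = K\tilde p - (K-1)\tilde q$, while an analogous calculation for $a'$ gives $q' = \tilde p - \tilde q$ and $r' = \tilde q$; both identities then follow by direct substitution. The main obstacle is keeping the geometric reduction of the $\mu$-chain (via the shift $M = \{0\} \cup (M' + (K+1))$) synchronized with the arithmetic reduction of the continued fraction $(q,r) \to (q',r')$, but once the bookkeeping is set up the remaining verifications are routine.
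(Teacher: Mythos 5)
Your argument is correct, and it is a complete, self-contained proof of a statement that this paper only quotes from the thesis \cite{mccoy2016thesis} without proof: the identity is exactly the classical (Riemenschneider-type) duality between the negative continued fraction expansions of $q/r$ and $q/(q-r)$, and your induction on $T=\sum(a_i-2)$, peeling off the block of leading $2$'s via $M=\{0\}\cup(M'+(K+1))$ and matching it with the arithmetic reduction $(q,r)\mapsto(q',r')=(q-r,\,q-\lfloor q/ (q-r)\rfloor\cdot\dots)$ --- concretely $q=(K+1)q'+r'$ and $q-r=q'$ --- is precisely the standard way to prove it. All the key verifications check out: $\norm{\mu_k}=\beta_{k+1}-\beta_k+1$, the shift identity $\alpha'_i=\alpha_{K+i}-(K+1)$ giving $\norm{\mu_0}=K+2$ and $\norm{\mu_k}=\norm{\mu'_{k-1}}$, and the unimodular change of variables showing the displayed fractions are already in lowest terms. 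One trivial point of bookkeeping: the paper's definition of $M=\{\beta_0,\dots,\beta_m\}$ is off by one relative to the lemma and Remark~5.4 (the $\mu_i$ should be indexed $0,\dots,m$ with $m=|M|-2$), so in your base case the correct reading is $m=0$ with the single vector $\mu_0=f_0+\cdots+f_l$ of norm $l+1$, which is what your computation actually uses; this does not affect the validity of the argument.
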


\begin{remark}\label{rem:mu_i_useful_cases}
Of particular interest will be the cases where $\frac{p}{q}=n-\frac{1}{q}$ and $\frac{p}{q}=n-\frac{q-1}{q}$. In these cases Lemma~\ref{lem:mui_cont_frac} says the following:
\begin{enumerate}[(i)]
\item If $\frac{p}{q}=n-\frac{1}{q}$, then $m=q-2$ and
\[\norm{\mu_0}=\dots = \norm{\mu_{q-2}}=2.\]
\item If $\frac{p}{q}=n-\frac{q-1}{q}$, then there is just $\mu_0$ and it satisfies $\norm{\mu_0}=q$.
\end{enumerate}
\end{remark}

For $1\leq k\leq t$, we say that $\sigma_k$ is {\em tight} if
\[\sigma_k=1+\sigma_1+\dots + \sigma_{k-1}.\]
If $\sigma_k$ is not tight, then Proposition~\ref{prop:CMcondition} shows that there is a subset $A \subseteq \{1, \dots, k-1\}$ such that $\sigma_k=\sum_{i\in A}\sigma_i$. For each $k$, let $A_k$ denote the maximal such subset with respect to the lexicographical ordering on subsets of $\{1, \dots, k-1\}$.
Define $\nu_k$ by
\[
\nu_k=
\begin{cases}
-e_k+e_{k-1}+\dots + e_{1}+\mu_0 & \text{if $\sigma_k$ is tight}\\
-e_k+\sum_{i\in A_k} e_i &\text{otherwise.}
\end{cases}
\]
Note that in any changemaker lattice $\sigma_1=1$ is always tight and we have $\nu_1=-e_1+\mu_0$. 
We say that a standard basis element $\nu_k$ is {\em gapless} if it takes the form\footnote{Such elements were called {\em just right} by Greene.}
\[\nu_k=-e_k+e_{k-1} + \dots + e_l\]
for some $l<k$.

\begin{remark}\label{rem:Standard_basis_facts} The lexicographical maximality condition on $A_k$ has the following useful consequences.
\begin{enumerate}[(i)]
\item\label{it:k-1_in_A_k} For $k>1$, we always have $\nu_k \cdot e_{k-1}=1$. When $\sigma_k$ is tight this is by definition. When $\sigma_k$ is not tight, Proposition~\ref{prop:CMcondition} shows that we can construct the set $A_k$ by a ``greedy algorithm''. Under such an algorithm, $k-1$ is the always the first element to be included in $A_k$.
\item\label{it:gapless_converse} If $v\in L$ takes the form
\[v=-e_k+e_{k-1} + \dots + e_l,\] 
then $v=\nu_k$ is necessarily a {\em gapless} standard basis vector.
\end{enumerate}
\end{remark}
We say that
\[S=\{\nu_1, \dots, \nu_t, \mu_1, \dots, \mu_m \}\]
is the {\em standard basis} for $L$. The standard basis is, in fact, a basis for $L$.
\begin{lemma}[Proposition~4.9, \cite{mccoy2016thesis}]\label{lem:standard_basis}
The standard basis $S$ is a basis for $L$. \qed
\end{lemma}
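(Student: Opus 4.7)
The plan is to prove the lemma by showing that $S$ is linearly independent and integrally spans $L$. A rank count is immediate: the vectors $w_0,\dots,w_l$ are linearly independent (their Gram matrix is the positive-definite tridiagonal matrix with diagonal $a_0,\dots,a_l$ and off-diagonal $-1$), so $L$ has rank $(s+t+1)-(l+1)=s+t-l$, matching $|S|$. Every $\nu_k$ and every $\mu_k$ with $k\geq 1$ lies in $L$ by a direct check of the pairings against each $w_j$ using the combinatorics of the $\alpha_i$ and $\beta_j$.

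For linear independence, suppose $\sum_{k=1}^t a_k\nu_k+\sum_{k\geq 1} b_k\mu_k=0$. Each $\mu_k$ has trivial $e$-coordinates, while $\nu_k\cdot e_j=0$ for $j>k$ and $\nu_k\cdot e_k=-1$. Pairing with $e_t,e_{t-1},\dots,e_1$ in succession forces $a_t=a_{t-1}=\dots=a_1=0$. The residual identity $\sum b_k\mu_k=0$ is then dispatched by pairing with $f_{\beta_m},f_{\beta_{m-1}},\dots$ and exploiting the staircase form of the $\mu_k$'s, which makes the resulting system upper-triangular.

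For spanning, given $v\in L$ the plan is a two-phase reduction. In Phase 1, for $k=t,t-1,\dots,1$ in order, let $d_k\in\Z$ be the current $e_k$-coefficient and replace the running vector by its sum with $d_k\nu_k$; since $\nu_k$ has $e_k$-coefficient $-1$ and vanishing $e_j$-coefficient for all $j>k$, this step cancels $e_k$ without re-introducing any higher $e_i$. The resulting vector $v'$ has no $e$-coordinates and still lies in $L$, so $v'\cdot w_0=0$ collapses to the statement that the $f_0$-coefficient of $v'$ vanishes. In Phase 2, writing $v'=\sum_{j=0}^s c_j f_j$, the equations $v'\cdot w_k=0$ for $k=1,\dots,l$ yield the chain relations $c_{\alpha_{k-1}}=c_{\alpha_{k-1}+1}+\dots+c_{\alpha_k}$; iterating these together with $c_0=0$ forces $c_j$ to be constant on each interval in the complement of $M$ (each such interval contains only the single index $\alpha_i$), and produces the global identity $\sum_{j\in M\setminus\{0\}}c_j=0$. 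Setting $b_k:=-\sum_{j=1}^k c_{\beta_j}$ for $1\leq k\leq m-1$, a coordinate-by-coordinate check using the staircase pattern of the $\mu_k$ verifies $v'=\sum_{k=1}^{m-1} b_k\mu_k$, the consistency at the top coordinate $f_{\beta_m}$ being exactly the derived identity.

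The main obstacle is the second phase: one must verify carefully that the chain relations together with $c_0=0$ are strong enough to express $v'$ as an \emph{integer} combination of the $\mu_k$'s via the telescoping formula, and in particular that this telescoping "closes up" at $\beta_m$ with no residue. Everything else—inclusion $S\subseteq L$, the rank count, and linear independence—reduces to routine bookkeeping with the defining pairings of the $w_j$, $\mu_k$, and $\nu_k$.
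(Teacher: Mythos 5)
The paper gives no argument for this lemma at all -- it is cited from the second author's thesis -- so there is no internal proof to compare against; judged on its own terms, your proof is correct. The inclusion $S\subseteq L$, the rank count $\operatorname{rank}L=(s+t+1)-(l+1)=s+t-l$, and linear independence via the triangular structure ($\nu_k\cdot e_k=-1$ and $\nu_k\cdot e_j=0$ for $j>k$, then the staircase of the $\mu_k$ in the $f$-coordinates) are routine and correctly handled. The spanning argument is the real content, and your two-phase reduction works: Phase~1 is legitimate because adding $d_k\nu_k$ kills the $e_k$-coordinate without disturbing $e_j$ for $j>k$ (true for tight and non-tight $\nu_k$ alike), and then $v'\cdot w_0=0$ does force $c_0=0$ since $w_0=f_0+\sum_i\sigma_ie_i$. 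In Phase~2, applying the relations $c_{\alpha_{k-1}}=c_{\alpha_{k-1}+1}+\dots+c_{\alpha_k}$ in the order $k=1,\dots,l$ starting from $c_0=0$ gives by induction $c_{\alpha_i}=-\sum_{r\geq 1,\,\beta_r<\alpha_i}c_{\beta_r}$ for every interior $\alpha_i$, and the $k=l$ relation is exactly the closing identity $\sum_{r=1}^{m}c_{\beta_r}=0$; with $b_k=-\sum_{j=1}^{k}c_{\beta_j}$ the coordinatewise verification of $v'=\sum_{k=1}^{m-1}b_k\mu_k$ then goes through with manifestly integral $b_k$, which is precisely the point you flagged as needing care. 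Two small corrections that do not affect the argument: your parenthetical claim that each maximal run of indices outside $M$ ``contains only the single index $\alpha_i$'' is false in general (consecutive $\alpha$'s occur whenever some $a_i=2$), but what you actually use -- that the $c_j$ are constant along such a run, since $\alpha_k=\alpha_{k-1}+1$ makes the $k$-th relation read $c_{\alpha_{k-1}}=c_{\alpha_k}$ -- is correct; and your count of the $\mu$'s as $\mu_1,\dots,\mu_{m-1}$ is the right one, the discrepancy with the displayed set $\{\nu_1,\dots,\nu_t,\mu_1,\dots,\mu_m\}$ being an off-by-one slip in the paper's own indexing (the $\mu_k$ are only defined for $0\leq k<m$).
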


Recall that the notions of irreducibility and unbreakability are given in Definition~\ref{def:irred/unbreak}.
\begin{lemma}[Lemma~4.13, \cite{mccoy2016thesis}]\label{lem:standard_basis_irred}
Every element $v\in S$ is irreducible. \qed
\end{lemma}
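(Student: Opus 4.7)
The plan is a direct coordinate analysis in the ambient orthonormal basis $\{f_0, \dots, f_s, e_1, \dots, e_t\}$ of $\Z^{s+t+1}$. Suppose $v \in S$ and $v = x + y$ with $x, y \in L$ and $x \cdot y \geq 0$; I will show $x = 0$ or $x = v$.

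The first observation is that every $v \in S$ has ambient coordinates in $\{-1, 0, 1\}$: for $\mu_i$ with $i \geq 1$ this is by definition, and for $\nu_k$ it holds in both the tight form $-e_k + e_{k-1} + \dots + e_1 + \mu_0$ (since $\mu_0 = f_0 + f_1 + \dots + f_{\beta_1}$) and the non-tight form $-e_k + \sum_{i \in A_k} e_i$. Writing $x_p, y_p$ for the ambient coordinates of $x, y$, we have $x_p + y_p = v_p$, so for each coordinate $p$ individually $x_p y_p = x_p(v_p - x_p)$ is a non-positive integer whenever $v_p \in \{-1, 0, 1\}$, and vanishes iff $x_p \in \{0, v_p\}$. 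Summing gives $x \cdot y \leq 0$, and combined with $x \cdot y \geq 0$ this forces coordinate-wise $x_p, y_p \in \{0, v_p\}$. In particular, $\mathrm{supp}(x), \mathrm{supp}(y) \subseteq \mathrm{supp}(v)$.

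It then remains to impose the orthogonality $w_j \cdot x = 0$ for $0 \leq j \leq l$ to pin $x$ down to $\{0, v\}$. Consider first $v = \mu_i$ with $i \geq 1$. Since $\beta_i, \beta_{i+1}$ are consecutive elements of $M = \{0, \dots, s\} \setminus \{\alpha_1, \dots, \alpha_{l-1}\}$, the interior positions $\beta_i + 1, \dots, \beta_{i+1} - 1$ form a consecutive run of $\alpha_j$'s. The $w_j$'s whose supports meet $\mathrm{supp}(\mu_i)$ therefore reduce, given the vanishing of $x$ outside $\{\beta_i, \dots, \beta_{i+1}\}$, to two-term relations of the form $-x_{\beta_i + r} + x_{\beta_i + r + 1} = 0$ (for each interior $\alpha$) together with one boundary relation that ties $x_{\beta_i}$ to $x_{\beta_i + 1}$ via $-x_{\beta_i} = x_{\beta_i + 1}$. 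Chaining these relations shows $x_{\beta_i + 1} = \dots = x_{\beta_{i+1}} = \epsilon$ and $x_{\beta_i} = -\epsilon$ for a common $\epsilon \in \{0, 1\}$, giving $x = \epsilon\,\mu_i$.

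The $\nu_k$ case is analogous on the $e$-side, using $w_0 \cdot x = x_{f_0} + \sum_j \sigma_j x_{e_j} = 0$ as the key linear relation (with $x_{e_k} \in \{0, -1\}$ and $x_{e_j} \in \{0, 1\}$ for the remaining relevant indices). In the non-tight case this reduces to asking whether some proper subset $T \subsetneq A_k$ satisfies $\sum_{j \in T} \sigma_j = \sigma_k$; the lexicographic maximality of $A_k$ recorded in Remark~\ref{rem:Standard_basis_facts}\eqref{it:k-1_in_A_k} shows no such $T$ exists, forcing $x \in \{0, \nu_k\}$. The tight case is the main technical obstacle: here $\nu_k$ has support both in the $e_j$'s and in $\mathrm{supp}(\mu_0) \subseteq \{f_0, \dots, f_{\beta_1}\}$, and the $e$-side chain argument must be combined with the $f$-side chain argument from the $\mu_i$ analysis, coordinated through $w_0$ by the coefficient $x_{f_0} \in \{0,1\}$; once both chains are aligned, the same dichotomy $x \in \{0, \nu_k\}$ emerges.
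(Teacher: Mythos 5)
Your overall strategy is sound, and since the paper offers no argument for this lemma (it is quoted from \cite{mccoy2016thesis}), what you give is a legitimate self-contained proof of the same general kind used there: the observation that every standard basis vector has ambient coordinates in $\{-1,0,1\}$, so that $x\cdot y\leq 0$ holds termwise and $x\cdot y\geq 0$ then forces $x\cdot z\in\{0,v\cdot z\}$ for each ambient basis vector $z$, after which the defining orthogonality relations pin $x$ down. Two points need repair, though neither is fatal. First, in the non-tight case the reason no proper subset $T\subsetneq A_k$ satisfies $\sum_{j\in T}\sigma_j=\sigma_k$ is simply positivity: the $\sigma_j$ are positive and $\sum_{j\in A_k}\sigma_j=\sigma_k$; the lexicographic maximality of $A_k$ from Remark~\ref{rem:Standard_basis_facts} is irrelevant to that step. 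Second, the tight case you leave as a sketch closes easily and needs no new idea: since the positions $1,\dots,\beta_1-1$ are consecutive $\alpha_j$'s, the relations $w_j\cdot x=0$ for $j\geq 1$ reduce, as in your $\mu_i$ analysis, to a chain forcing $x\cdot f_0=\dots=x\cdot f_{\beta_1}=\phi\in\{0,1\}$; writing $x\cdot e_k=-\delta$ with $\delta\in\{0,1\}$ and $T=\{i<k:\ x\cdot e_i=1\}$, the constraint $w_0\cdot x=0$ becomes $\phi+\sum_{i\in T}\sigma_i=\delta\sigma_k$, and the tightness identity $\sigma_k=1+\sigma_1+\dots+\sigma_{k-1}$ leaves only $(\delta,\phi,T)=(0,0,\emptyset)$, giving $x=0$, or $(1,1,\{1,\dots,k-1\})$, giving $x=\nu_k$. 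With those two adjustments your proof is complete.
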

We will also require the following structure result on certain irreducible and unbreakable elements of $L$. It is an extension of Lemma~4.16 and Lemma~4.17 of \cite{mccoy2016thesis}. 
\begin{lemma}\label{lem:irred_frac_part2}
Let $v\in L$ be irreducible and unbreakable with $v\cdot f_i \ne 0$ for some $i$.
\begin{enumerate}[(i)]
\item If $v\cdot f_0=0$, then $v$ takes the
form 
\[\pm v= \mu_a+ \dots +\mu_b,\]
where there is at most one $c$ in the range $a\leq c \leq  b$ with $\norm{\mu_c}>2$.
\item If $v\cdot f_0\ne 0$, then $v$ takes the form
\[\pm v= -e_g + e_{k-1} + \dots +e_1 + \mu_0+ \dots + \mu_b,\]
where $\sigma_k$ is tight, $\sigma_g=\sigma_k$ and $\norm{\mu_i}=2$ for $1\leq i \leq b$.
\end{enumerate} 
\end{lemma}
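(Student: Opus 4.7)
The plan is to expand $v$ in the standard basis $S=\{\nu_1,\dots,\nu_t,\mu_1,\dots,\mu_m\}$ of Lemma~\ref{lem:standard_basis}, noting two key facts: among the basis elements only the tight $\nu_k$'s have nonzero $f_0$-pairing (via their $\mu_0$ summand), and the $\mu_j$'s for $j\geq 1$ pair as a chain by \eqref{eq:mu_i_pairings}. The same-sign result, Lemma~\ref{lem:same_sign}, and irreducibility/unbreakability will be applied repeatedly to cut down the possible form of $v$.

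For Part (i), I would first show $v$ has no $e$-component. Decompose $v=v_f+v_e$ orthogonally into its $f$- and $e$-parts. The assumption $v\cdot f_0=0$ together with $v\cdot w_0=0$ implies that both $v_f$ and $v_e$ separately lie in $L$, since the $w_i$ for $i\geq 1$ are supported on the $f_i$'s. As $v_f\cdot v_e=0$ trivially, irreducibility combined with $v\cdot f_i\neq 0$ for some $i$ forces $v_e=0$, so $v=\sum_{j\geq 1}\beta_j\mu_j$. Lemma~\ref{lem:same_sign} then gives all $\beta_j\geq 0$ (WLOG). Irreducibility rules out gaps in the support (since $\mu_i\cdot\mu_j=0$ for $|i-j|\geq 2$, any gap splits $v$ into orthogonal pieces) and coefficients $\geq 2$ (a direct calculation shows $(\mu_a+\cdots+\mu_b)\cdot v'\geq 0$ for $v'=v-(\mu_a+\cdots+\mu_b)$), yielding $v=\mu_a+\cdots+\mu_b$. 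Finally, unbreakability excludes two indices $c_1<c_2$ in $[a,b]$ with $\norm{\mu_{c_i}}\geq 3$: the split $v=(\mu_a+\cdots+\mu_{c_2-1})+(\mu_{c_2}+\cdots+\mu_b)$ has pairing $-1$ with both summands of norm $>2$ by \eqref{eq:norm_computation}.

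For Part (ii), the target is to show $v=\nu_k+\nu_{k+1}+\cdots+\nu_g+\mu_1+\cdots+\mu_b$ in the standard basis, with $\nu_k$ tight and $\nu_{k+1},\ldots,\nu_g$ gapless of the form $-e_i+e_{i-1}$; then the telescoping $\nu_{k+1}+\cdots+\nu_g=e_k-e_g$ produces the claimed expression, while gaplessness combined with Remark~\ref{rem:Standard_basis_facts}\eqref{it:gapless_converse} and the greedy description of the $A_i$'s forces $\sigma_{k+1}=\cdots=\sigma_g=\sigma_k$. Writing $v=\sum_k\alpha_k\nu_k+\sum_{j\geq 1}\beta_j\mu_j$ with all coefficients $\geq 0$ (Lemma~\ref{lem:same_sign}), the hypothesis $v\cdot f_0\neq 0$ yields at least one tight $\alpha_k\geq 1$. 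The main step is to show $v\cdot f_0=1$ with a unique tight $\alpha_k=1$: the pairing $\nu_{k_1}\cdot\nu_{k_2}=\norm{\mu_0}+k_1-2\geq 1$ for distinct tight indices $k_1<k_2$ enables a decomposition of $v$ violating irreducibility, and an analogous argument rules out a single tight coefficient $\geq 2$. Once $v-\nu_k\in L$ has zero $f_0$-component, applying the Part (i) logic to its $\mu$-component together with Remark~\ref{rem:Standard_basis_facts}\eqref{it:k-1_in_A_k} on the non-tight $\nu_i$'s (each must have $\nu_i\cdot e_{i-1}=1$) constrains the remaining standard-basis coefficients to the claimed consecutive gapless chain and $\mu$-block. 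Unbreakability then forces $\norm{\mu_j}=2$ for all $1\leq j\leq b$ (not merely for all but one, as in Part (i)): the $\nu$-part of $v$ already provides one splitting of pairing $-1$, so any $\norm{\mu_c}\geq 3$ inside the $\mu$-block would yield a competing decomposition contradicting unbreakability.

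The main obstacle is the tight-uniqueness step in Part (ii); assembling a decomposition $v=x+y$ violating irreducibility from the positive pairing $\nu_{k_1}\cdot\nu_{k_2}\geq 1$ requires careful bookkeeping of the other standard-basis contributions to $v$, including the negative pairings $\mu_1\cdot\mu_0=-1$ which could partially offset the tight-tight contribution. Everything else is a somewhat intricate but essentially routine application of the irreducibility and unbreakability hypotheses against the chain-like pairing structure of the standard basis.
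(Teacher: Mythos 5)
Your part (i) is fine and in fact more self-contained than the paper's treatment (which quotes Lemmas~4.16--4.17 of \cite{mccoy2016thesis} for the coarse structure and only adds the unbreakability step): the orthogonal splitting $v=v_f+v_e$ with both pieces in $L$, the same-sign/gap/coefficient-one reductions along the chain $\mu_1,\dots,\mu_m$, and the final unbreakability argument all check out. The problem is part (ii), and the gap is exactly where the content of the lemma lies. First, your appeal to Lemma~\ref{lem:same_sign} for the expansion $v=\sum_k\alpha_k\nu_k+\sum_{j\geq 1}\beta_j\mu_j$ is not valid: that lemma (and its proof) requires distinct basis vectors to pair non-positively, which holds for the vertex basis of a plumbing but fails for the standard basis of a changemaker lattice --- for instance a tight $\nu_k$ satisfies $\nu_k\cdot\nu_1=\norm{\mu_0}-1\geq 1$, and $\nu_k\cdot\nu_j=|A_j|-1$ can be positive for non-tight $\nu_j$ as well. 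So the non-negativity of all coefficients, on which the rest of your outline leans, is unjustified. Second, the step you yourself flag as the "main obstacle" --- uniqueness of the tight index, its coefficient being $1$, and forcing the remaining $\nu$-coefficients into a consecutive chain of gapless vectors of the form $-e_i+e_{i-1}$ --- is precisely the heart of the statement and is only sketched; moreover, your suggestion to "apply the Part (i) logic" to $v-\nu_k$ does not go through as stated, since $v-\nu_k$ need not be irreducible or unbreakable, so none of the part (i) machinery applies to it directly.

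For comparison, the paper avoids controlling the standard-basis expansion altogether. It takes from \cite{mccoy2016thesis} that $\pm v=x_I+x_F$ with $x_I$ supported on the $e_i$ and $x_F=\mu_0+\dots+\mu_b$, gets $\norm{\mu_1}=\dots=\norm{\mu_b}=2$ by applying unbreakability to $(x_I+\mu_0)\cdot(\mu_1+\dots+\mu_b)=-1$, and then pins down $x_I$ by an auxiliary-vector trick: taking $k$ minimal with $x_I\cdot e_k\leq 0$, the changemaker condition provides $B\subseteq\{1,\dots,k-1\}$ with $\sigma_k-1=\sum_{i\in B}\sigma_i$, and one compares $v$ with $z=-e_k+\sum_{i\in B}e_i+x_F\in L$; the estimate $(x_I+x_F-z)\cdot z\geq -1$ together with irreducibility and unbreakability leaves only the two shapes in the statement (tightness of $\sigma_k$ then follows from $v\cdot w_0=0$). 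If you want to salvage your route, you would need a substitute for the same-sign step valid for standard bases and a genuine argument for the tight-uniqueness and chain structure; as written, the proposal does not prove part (ii).
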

\begin{proof}
Since $v$ is irreducible, it follows from Lemma~4.16 and Lemma~4.17 of \cite{mccoy2016thesis} that if $v\cdot f_0=0$, then $v$ the form
\[\pm v = \mu_a+ \dots +\mu_b.\]
for $1\leq a \leq b \leq m$. We claim the unbreakability of $v$ implies that there is at most one $a\leq c\leq b$ with $\norm{\mu_c}>2$. Take $c$ to be minimal such that $\norm{\mu_c}>2$. If $c<b$, then we have 
\[(\mu_a+\dots + \mu_c) \cdot (\mu_{c+1} + \dots + \mu_b)=-1.\]
Thus, the unbreakability of $v$ implies that we must have $\norm{\mu_{c+1} + \dots + \mu_b} =2$, and hence by \eqref{eq:norm_computation} that $\norm{\mu_{c+1}}= \dots = \norm{\mu_b}=2$. 
Similarly, if $a < c$ then \[(\mu_a+\dots + \mu_{c-1}) \cdot (\mu_{c} + \dots + \mu_b)=-1,\] implying that $\norm{\mu_a} = \cdots = \norm{\mu_{c-1}} = 2$, as required.

Now suppose that $v\cdot f_0\neq 0$. In this case Lemma~4.16 and Lemma~4.17 of \cite{mccoy2016thesis} imply that $v$ takes the form
\[\pm v= x_I + x_F\]
where $x_I \neq 0$, $x_I \cdot f_i=0$ for all $i$ and $x_F$ takes the form
\[x_F=\mu_0+ \dots + \mu_b.\]
Since $\mu_1, \dots, \mu_b$ are in $L$, we have $x_I + \mu_0\in L$. We also have that $\norm{x_I+\mu_0}>\norm{\mu_0}\geq 2$. So by applying the unbreakability condition to $(x_I + \mu_0)\cdot (\mu_1 + \dots + \mu_b)=-1$ we obtain that
\[\norm{\mu_1+\dots + \mu_b}=2.\]
Using \eqref{eq:norm_computation}, this implies that 
\[\norm{\mu_1}= \dots= \norm{\mu_b}=2,\] 
as required.

Now we study the structure of $x_I$. Let $k\geq 1$ be minimal such that $x_I\cdot e_k\leq 0$. By Proposition~\ref{prop:CMcondition}, there is a subset $B\subseteq \{1, \dots, k-1\}$ such that
\[\sigma_k-1 = \sum_{i\in B} \sigma_i.\]
Thus we can consider
\[z=-e_k+ \sum_{i\in B} e_i + x_F \in L.\]
Note that by assumption, we have $x_I\cdot e_i\geq 1$ for all $i<k$ and hence for all $i \in B$. Thus we obtain the bound
\begin{align}\begin{split}
(x_I+ x_F-z)\cdot z 
&= -(x_I\cdot e_k +1) + \sum_{i\in B} (x_I\cdot e_i-1) \\
&\geq -x_I\cdot e_k -1\geq -1.
\end{split}\end{align}
Thus by the assumption of irreducibility we have that
\[(x_I+x_F-z)\cdot z =
\begin{cases}
0 &\text{if $z=x_I+x_F$}\\
-1 &\text{otherwise.}
\end{cases}
\]
Suppose first that $z=x_I+x_F$. Since $k$ was chosen to minimal such that $x_I\cdot e_k\leq 0$, we have
\[
x_I+x_F= -e_k+e_{k-1}+ \dots +e_1+ \mu_0 + \dots +\mu_b,
\]
which is in the required form.
Thus we can assume that 
\[(x_I+x_F-z)\cdot z=-1\]
which can only occur if $x_I\cdot e_k=0$. Since $\norm{z}>2$, it follows from the indecomposability condition that $x_I+x_F-z$ has norm two. We have $(x_I+x_F-z)\cdot e_k = -(z\cdot e_k)=1$. Thus $x_I+x_F-z$ takes the form 
\[x_I+x_F-z=e_k -\varepsilon e_g\]
for some $g\neq k$ and some $\varepsilon \in \{\pm 1\}$. The fact that $(x_I+x_F-z)\cdot w_0 = 0$ shows that $\sigma_g=\sigma_k$ and $\varepsilon=1$.
Thus we have that
\[x_I+x_F= z+e_k-e_g\]
for some $g$ with $\sigma_g=\sigma_k$. Since $k$ is minimal with $v\cdot e_k\leq 0$, it follows that $g>k$ and
\[x_I+x_F= -e_g + e_{k-1} + \dots +e_1 + \mu_0+ \dots + \mu_b,\]
as required.
\end{proof}

\begin{remark}
When rewritten in terms of the orthonormal basis for $\Z^{s+t+1}$ the two types of vector arising in the previous lemma are
\[
\mu_a+ \dots +\mu_b = -f_{\beta_a}+ f_{\beta_c+1}+ \dots + f_{\beta_{c+1}-1} + f_{\beta_{b+1}},
\]
where, if it exists, $c$ is the unique $c$ in the range $a\leq c\leq b$ with $\norm{\mu_c}>2$
and 
\[-e_g + e_{k-1} + \dots +e_1 + f_0+ \dots + f_{\beta_1-1} + f_{\beta_{b+1}}.\]
\end{remark}

We end with a final useful observation.

\begin{remark}\label{rem:relabel}
There is a certain redundancy in the choice of indexing of the $f_0, \dots, f_s$ and $e_1, \dots, e_t$. Whenever $\sigma_a=\sigma_b$ for $a\ne b$ (equivalently if $e_a-e_b\in L$), then we can reindex the $e_i$ to exchange $e_a$ and $e_b$. Similarly given $f_a$ and $f_b$ such that $f_a-f_b\in L\setminus \{0\}$, then we can exchange $f_a$ and $f_b$. More formally, this is the observation that automorphism of $\Z^{s+t+1}$ exchanging $e_a$ and $e_b$ or $f_a$ and $f_b$ preserves $L$ as subset of $\Z^{s+t+1}$. We will make frequent use of such relabellings in Section~\ref{sec:non_int_surgeries}.
\end{remark}

\section{Analysis for the $e=2$ case}
\label{sec:non_int_surgeries}
Although the formal proof of Theorem~\ref{thm:nonint} is stated in Section~\ref{sec:proof_of_main}, this section contains the analysis necessary to prove the Theorem~\ref{thm:nonint} for $e=2$. The section culminates in Lemma~\ref{lem:lattices_to_surgery} which combines with Lemma~\ref{lem:CM_thm_plumbings} to give the proof. 

Let $L$ be a $p/q$-changemaker lattice
\[
L=\langle w_0, \dots, w_l \rangle^\bot \subseteq \langle f_0, \dots, f_s, e_1, \dots, e_t \rangle = \Z^{s+t+1}
\]
for $q>1$. Suppose that $L$ is isomorphic to the intersection form of some plumbing $\Gamma$ (as in Figure~\ref{fig:star_plumbing}) with $e=2$. Let $V$ denote the image of the vertices of $\Gamma$ in $L$. In a mild abuse of notation we will simply refer to the elements of $V$ as the vertices of $\Gamma$. We seek to understand the structure of $V$ and $\Gamma$. The eventual aim is to show that if $Y$ is the Seifert fibered spaces for which $\Gamma$ is the canonical plumbing then $Y$ arises by $p/q$-surgery. In order to do this, we will take $L$ to have standard basis elements
\[\{\nu_1, \dots, \nu_t, \mu_1, \dots, \mu_m\},\]
as defined in Section~\ref{sec:standard_bases}.

Key to this section will be the observation that $\Gamma$ is quasi-alternating. Consequently the results of Section~\ref{sec:SF_plumbings} apply, showing, in particular, that the vertices are irreducible and unbreakable.
\begin{prop}\label{prop:lambda_is_QA}
The plumbing graph $\Gamma$ is quasi-alternating.
\end{prop}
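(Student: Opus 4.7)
The plan is to apply Lemma~\ref{lem:QAchar} directly: since $e=2$ is given by hypothesis, it suffices to verify any one of the equivalent conditions of that lemma, and \eqref{it:embed_condition} is the condition most visibly encoded into the definition of a changemaker lattice.

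By definition $L = \langle w_0, \dots, w_l\rangle^\bot \subseteq \Z^{s+t+1}$, so $L$ comes with a tautological embedding into the standard positive diagonal lattice. I would first observe that this embedding is primitive: if $v \in \Z^{s+t+1}$ satisfies $nv \in L$ for some nonzero integer $n$, then $n(v \cdot w_i) = 0$ for each $i$, hence $v \cdot w_i = 0$ for each $i$, so $v \in L$. A Smith normal form argument---or equivalently, the observation that the torsion part of the cokernel of the transpose inclusion can be identified with the torsion part of $\Z^{s+t+1}/L$---then translates primitivity of $L$ into surjectivity of the transpose of the inclusion matrix. Transporting this embedding through the assumed isomorphism $L \cong Q_\Gamma$ therefore realises condition \eqref{it:embed_condition} of Lemma~\ref{lem:QAchar} for the intersection lattice of $X_\Gamma$.

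Applying the lemma now yields condition \eqref{it:num_condition}, which, since $e = 2$, reads $\frac{q_i}{p_i} + \frac{q_j}{p_j} < 1$ for some $i \neq j$. Because the star-shaped plumbing graph and the associated Seifert fibered space are invariant (up to homeomorphism) under permuting the three legs, I would finish by relabelling so that this inequality holds with $(i,j) = (1,2)$, which matches the definition of a quasi-alternating plumbing. The argument is essentially a direct invocation of Lemma~\ref{lem:QAchar}, so I anticipate no genuine obstacle; the only step requiring care is the standard linear-algebra translation between primitivity of a sublattice of $\Z^n$ and surjectivity of the transpose of its inclusion matrix.
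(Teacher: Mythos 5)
Your proposal is correct and takes essentially the same approach as the paper: both verify condition \eqref{it:embed_condition} of Lemma~\ref{lem:QAchar} for the tautological embedding $L \subseteq \Z^{s+t+1}$, transported through the isomorphism $L \cong Q_\Gamma$, and then invoke the lemma (with a harmless relabelling of the legs) to obtain the $e=2$ numerical condition. The only difference is the verification of surjectivity of $A^T$: the paper exhibits $A^T$ in row echelon form with $-1$ pivots using the standard basis of $L$, whereas you deduce it from the general facts that an orthogonal complement is a saturated sublattice and that saturation is equivalent to surjectivity of the transpose of the inclusion matrix; both arguments are valid.
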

\begin{proof}
  Let $A$ be the matrix representing the inclusion $L \rightarrow \Z^{s+t+1}$ with respect to the standard basis for $L$ and the orthonormal basis for $\Z^{s+t+1}$. By ordering the basis vectors appropriately $A^T$ takes the form

\[
A^T=
\begin{pmatrix}
\nu_t \cdot f_s & \dots & \nu_t\cdot f_0 & \nu_t\cdot e_1 & \dots & \nu_t\cdot e_t \\
\vdots          &        & \vdots         & \vdots         &        &\vdots \\
\nu_1 \cdot f_s & \dots & \nu_1\cdot f_0 & \nu_1\cdot e_1 & \dots & \nu_1\cdot e_t \\
\mu_1 \cdot f_s & \dots & \mu_1\cdot f_0 & \mu_1\cdot e_1 & \dots & \mu_1\cdot e_t \\
\vdots          &        & \vdots         & \vdots         &        &\vdots \\
\mu_m \cdot f_s & \dots & \mu_m\cdot f_0 & \mu_m\cdot e_1 & \dots & \mu_m\cdot e_t \\
\end{pmatrix}
\]
However by definition of the standard basis elements, this matrix is in row echelon form and the first non-zero entry in each row is $-1$. Consequently $A^T$ is surjective over the integers. This shows that Lemma~\ref{lem:QAchar}\eqref{it:embed_condition} is satisfied. Therefore, Lemma~\ref{lem:QAchar}\eqref{it:num_condition} applies to show $\Gamma$ is quasi-alternating. 
\end{proof}

Now we set about understanding the vertices of $\Gamma$ in $L$.

\begin{lemma}\label{lem:mu_i_are_vertices}
We may assume that $\mu_1, \dots, \mu_m$ are vertices.
\end{lemma}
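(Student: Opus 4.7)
The strategy is to combine the structural classification of irreducible-and-unbreakable vectors in a changemaker lattice (Lemma~\ref{lem:irred_frac_part2}) with the fact that the vertices of $\Gamma$ themselves are irreducible and unbreakable (Lemma~\ref{lem:key_properties}, applicable because $\Gamma$ is quasi-alternating by Proposition~\ref{prop:lambda_is_QA}), and to use the relabelling freedom of Remark~\ref{rem:relabel} at the end to align vertices with $\mu_j$'s. First I would single out the set $V^f \subseteq V$ of vertices $v$ with $v\cdot f_i\ne 0$ for some $i$. Each such vertex takes one of the two forms given by Lemma~\ref{lem:irred_frac_part2}: either $v\cdot f_0=0$ and $\pm v = \mu_a+\dots+\mu_b$ with a unique heavy summand, or $v\cdot f_0\ne 0$ and $\pm v = -e_g + e_{k-1}+\dots+e_1 + \mu_0+\dots+\mu_b$.

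Next, using the chain pairings $\mu_i\cdot \mu_{i+1}=-1$ and $\mu_i\cdot \mu_j=0$ for $|i-j|>1$ from \eqref{eq:mu_i_pairings}, together with the plumbing rule that adjacent vertices pair $-1$ while non-adjacent ones pair $0$, I would argue that the vertices in $V^f$ form a linear chain sitting inside $\Gamma$, namely one arm of the star plumbing (with a possible type-(ii) vertex at its base, adjacent to the central vertex). Linear independence of the vertex basis, together with a rank count matching $|V^f|$ against the number of $\mu_j$'s in $L$, pins this chain down to be precisely the arm corresponding via Lemma~\ref{lem:mui_cont_frac} to the continued fraction $\frac{q}{q-r}$.

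The crucial step is to rule out the possibility that some vertex is a proper sum $v=\mu_a+\dots+\mu_b$ with $a<b$. If this held, then $\mu_a\in L$ itself would admit a vertex expansion $\mu_a=\sum c_u u$, and by Lemma~\ref{lem:same_sign} all $c_u$ would have the same sign, while the norm inequality of Lemma~\ref{lem:key_inequality} together with the explicit $f_i$-pairings of $\mu_a$ would force $\mu_a$ to be supported on a very restricted subset of $V^f$. Combined with the decomposition $v=\mu_a + (\mu_{a+1}+\dots+\mu_b)$, which satisfies $\mu_a\cdot(\mu_{a+1}+\dots+\mu_b)=-1$ with both summands of norm at least three whenever the chain contains a heavy $\mu_c$ in its interior, this contradicts the unbreakability of the vertex $v$. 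Once every $V^f$-vertex is shown to equal a single $\pm\mu_j$, the freedom of Remark~\ref{rem:relabel} to permute indices (in particular to absorb the signs and match indexing conventions) lets one arrange that the $\mu_j$'s literally equal vertices of $\Gamma$. The principal obstacle is this rigidity step, which converts the allowed \emph{interval} form of Lemma~\ref{lem:irred_frac_part2}(i) into the stronger conclusion that each interval has length one, in the spirit of earlier changemaker/graph-lattice arguments.
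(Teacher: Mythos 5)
There is a genuine gap, and it sits exactly at what you call the crucial step: ruling out that some vertex is a proper sum $v=\mu_a+\dots+\mu_b$ with $a<b$. This cannot be ruled out, because it genuinely happens. The isomorphism $Q_\Gamma\cong L$ is arbitrary, and there are automorphisms of $\Z^{s+t+1}$ preserving $L$ (exactly the relabellings of Remark~\ref{rem:relabel}) that carry $\{\mu_1,\dots,\mu_m\}$ to configurations containing proper intervals: for instance, when $\norm{\mu_1}=\norm{\mu_2}=\norm{\mu_3}=2$, transposing $f_{\beta_1+1}$ and $f_{\beta_1+2}$ preserves $L$ and sends $\mu_1\mapsto\mu_1+\mu_2$, $\mu_2\mapsto-\mu_2$, $\mu_3\mapsto\mu_2+\mu_3$, and these images still pair as a linear chain, so they form a perfectly valid vertex set. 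This is why the lemma only claims ``we may assume'': the relabelling freedom has to be used \emph{during} the argument to normalize such vertices, not merely at the end to fix signs and indices. Your proposed contradiction via unbreakability also does not materialize: Lemma~\ref{lem:irred_frac_part2}(i) already forces at most one heavy $\mu_c$ in the interval, so one can always split $v$ so that one side of the splitting has norm two, and in the all-norm-two case $\norm{v}=2$ and unbreakability imposes no condition at all. Similarly, the same-sign/norm-inequality analysis of a vertex expansion of $\mu_a$ gives no contradiction, since $\mu_a$ can legitimately fail to be a vertex.

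For comparison, the paper's proof proceeds by downward induction on $k$: assuming $\mu_{k+1},\dots,\mu_m$ are vertices, it expands $\mu_k$ in the vertex basis, uses Lemma~\ref{lem:key_properties}\eqref{it:norm_bound} to find a contributing vertex $v$ with $\norm{v}\leq\norm{\mu_k}$, so that Lemma~\ref{lem:irred_frac_part2} forces $\pm v=\mu_a+\dots+\mu_b$ with $a\leq k\leq b$ and all summands of norm two except possibly $\mu_k$; it then relabels $f_{\beta_a}\leftrightarrow f_{\beta_k}$ to arrange $a=k$, and uses adjacency with the already-placed chain $\mu_{k+1},\dots,\mu_m$ to conclude either $v=\mu_k$ or $v=-(\mu_k+\dots+\mu_m)$, the latter being converted to the former by reversing the order of $f_{\beta_{k+1}},\dots,f_s$ and negating all vertices. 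If you want to salvage your outline, you would need to replace the rigidity claim by this kind of normalization-by-relabelling, and you would also need the norm bound $\norm{v}\leq\norm{\mu_k}$, which your sketch never invokes but which is what pins the heavy summand of the interval to be $\mu_k$ itself.
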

\begin{proof}
We prove the lemma inductively by establishing that if $\mu_{k+1}, \dots, \mu_m$ are vertices, then we may further assume that $\mu_k$ is a vertex.
 
Since the vertices of $\Gamma$ span $L$, there are integers $c_v$ such that $\mu_k =\sum_{v\in \Gamma} c_v v$. For any $v$ with $c_v\ne 0$, Lemma~\ref{lem:key_properties}\eqref{it:norm_bound} shows that $\norm{v}\leq \norm{\mu_k}$. We may write each $v$ as an integer combination of the standard basis elements in a unique way. Thus we see there must be some $v$ with $c_v\neq 0$, for which $\mu_k$ appears with non-zero coefficient when $v$ is expressed as an integer combination of standard basis elements. As $v$ is irreducible and unbreakable, Lemma~\ref{lem:irred_frac_part2} combined with the fact that $\norm{v}\leq \norm{\mu_k}$ shows that $v$ takes the form
\[\pm v = \mu_a + \dots + \mu_b,\]
where $a\leq k \leq b$ and there is at most one $c$ in the range $a\leq c \leq b$ with $\norm{\mu_c}\geq 2$. If such a $c$ exists, then we have $k=c$, since $\norm{v}=\norm{\mu_c}\leq \norm{\mu_k}$. Thus we have $\norm{\mu_i}=2$ for $a \leq i<k$ and $k <i\leq b$. If $a<k$, then a relabelling of the $f_i$ (the one exchanging the roles of $f_{\beta_a}$ and $f_{\beta_k}$) allows us to assume that $a=k$.

If $k=m$, then we have shown that we can assume $\pm \mu_m$ is a vertex. So, by multiplying all vertices by $-1$ if necessary, we can assume that $\mu_m$ is a vertex. This deals with the base case of the induction.

Thus suppose that $k<m$. By the previous discussion we can assume there is a vertex $v$ of the form $v=\varepsilon (\mu_k+ \dots + \mu_b)$. One can easily calculate that 
\begin{equation}\label{eq:vdotmui}
\mu_i \cdot v =
\begin{cases}
0 				&\text{if $k<i<b$}\\
\varepsilon 	&\text{if $i=b$}\\
-\varepsilon 	&\text{if $i=b+1$}\\
0 				&\text{if $i>b+1$.}
\end{cases}
\end{equation}
Since $\mu_{k+1}, \dots, \mu_m$ form a connected chain of vertices, $v$ can pair non-trivially with at most one of them and this pairing must be $-1$. Thus it follows from \eqref{eq:vdotmui}, that we must have either have $b=k$ and $\varepsilon=1$, or $b=m$ and $\varepsilon=-1$. In the former case we must have $v=\mu_k$ as required. In the latter, we have 
\begin{equation}
v=-(\mu_{k}+ \dots + \mu_m)
\end{equation}
However in this case we have
\[
\mu_{k+1}=-f_{\beta_{k+1}}+f_{\beta_{k+2}}, \dots, \mu_m=-f_{s-1}+f_s
\]
 are all of norm two and that
\[
v=f_{\beta_{k}}-f_{\beta_{k}+1}- \dots -f_{\beta_{k+1}-1}- f_s.
\] 
Thus if we relabel the $f_i$ so as to reverse the order of the $f_{\beta_{k+1}}, \dots, f_s$, then the set of vertices $\{v, \mu_{k+1}, \dots, \mu_m\}$ becomes $\{-\mu_{k}, \dots, -\mu_m\}$. Therefore, after multiplying every vertex by -1, we may assume that we have the desired set of vertices.

This verifies the inductive step and completes the proof. 
\end{proof}

\begin{lemma}\label{lem:v_form}
Let $v$ be a vertex distinct from $\mu_1, \dots, \mu_m$ with $v\cdot f_i\ne 0$ for some $i$, then $v$ takes the form
\begin{equation}\label{eq:v+type}
v=-e_g + e_{k-1}+ \dots + e_1 + \mu_0,
\end{equation}
or
\begin{equation}\label{eq:v-type}
v=e_g - e_{k-1}-\dots - e_1 - \mu_0 - \dots  -\mu_m,
\end{equation}
where $k \le g$, and this latter case can occur only if
\[\norm{\mu_1}=\dots = \norm{\mu_m}=2.\]
\end{lemma}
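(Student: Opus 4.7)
The strategy is to apply Lemma~\ref{lem:irred_frac_part2} to $v$, which is irreducible and unbreakable by Lemma~\ref{lem:key_properties}\eqref{it:irred}--\eqref{it:unbreakable}, and then to exploit two additional constraints: pairings $u\cdot w$ between distinct vertices of $\Gamma$ must lie in $\{-1,0\}$, and the vertex set $V$ is linearly independent in $L$ (it is a basis). Recall also that $\mu_1,\ldots,\mu_m \in V$ by Lemma~\ref{lem:mu_i_are_vertices}.

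Consider first the case $v\cdot f_0=0$. Then Lemma~\ref{lem:irred_frac_part2}(i) yields $\pm v=\mu_a+\cdots+\mu_b$ with $1\leq a\leq b\leq m$. If $a=b$, then $v=\pm\mu_a$, contradicting either the hypothesis $v\neq\mu_a$ or the linear independence of $V$. Assume then $a<b$. For the $+$ sign, $v\cdot\mu_a=\norm{\mu_a}-1\geq 1$, which violates the plumbing pairing condition with $\mu_a\in V$. For the $-$ sign, the pairings $v\cdot\mu_{a-1}=1$ (when $a\geq 2$) and $v\cdot\mu_{b+1}=1$ (when $b\leq m-1$) force $a=1$ and $b=m$; but then $v+\mu_1+\cdots+\mu_m=0$ is a nontrivial dependence among elements of $V$, a contradiction. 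So this case cannot occur.

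In the case $v\cdot f_0\neq 0$, Lemma~\ref{lem:irred_frac_part2}(ii) gives $\pm v=-e_g+e_{k-1}+\cdots+e_1+\mu_0+\cdots+\mu_b$ with $\sigma_k$ tight, $\sigma_g=\sigma_k$, and $\norm{\mu_i}=2$ for $1\leq i\leq b$. The inequality $k\leq g$ is implicit in this form: were $g<k$, the term $-e_g$ would cancel against the $e_g$ appearing in $e_{k-1}+\cdots+e_1$, contradicting the stated shape. To determine the sign and the value of $b$, the plan is to evaluate the pairings of $v$ with $\mu_b$ (when $b\geq 1$) and $\mu_{b+1}$ (when $b\leq m-1$). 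With the $+$ sign and $b\geq 1$ one has $v\cdot\mu_b=\norm{\mu_b}-1=1$, which is forbidden; so the $+$ sign forces $b=0$ and yields form \eqref{eq:v+type}. With the $-$ sign and $b\leq m-1$ the calculation $v\cdot\mu_{b+1}=-\mu_b\cdot\mu_{b+1}=1$ is again forbidden; so the $-$ sign forces $b=m$ and yields form \eqref{eq:v-type}, with the condition $\norm{\mu_1}=\cdots=\norm{\mu_m}=2$ inherited directly from Lemma~\ref{lem:irred_frac_part2}.

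The main difficulty is essentially bookkeeping: verifying each pairing computation, handling the boundary cases ($a=b$; $b=0$ versus $b=m$; $m=0$), and extracting the implicit inequality $k\leq g$ from the shape of the expression in Lemma~\ref{lem:irred_frac_part2}. The one conceptually nontrivial step is the use of linear independence of $V$ to rule out the configuration $v=-(\mu_1+\cdots+\mu_m)$ in the $v\cdot f_0=0$ case, since this vector is not excluded by Lemma~\ref{lem:irred_frac_part2} or by the plumbing pairing condition on its own.
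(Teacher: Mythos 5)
Your proof is correct and follows essentially the same route as the paper: apply Lemma~\ref{lem:irred_frac_part2} to the irreducible, unbreakable vertex $v$, eliminate the $v\cdot f_0=0$ case using linear independence of the vertex set (the paper does this in one line, where your pairing computations with $\mu_a$, $\mu_{a-1}$, $\mu_{b+1}$ are a slightly longer but valid detour), and then pin down the sign and the value of $b$ by pairing $v$ against the chain $\mu_1,\dots,\mu_m$ and using that distinct vertices of a plumbing pair in $\{0,-1\}$. The paper's argument is the same computation phrased via ``$v$ can pair non-trivially with at most one $\mu_i$,'' so there is no substantive difference.
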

\begin{proof}
Since every vertex is irreducible and unbreakable, by Lemma~\ref{lem:irred_frac_part2} we see that either $v$ is a linear combination of the $\mu_1, \dots, \mu_m$ or it has $v\cdot f_0\neq 0$. Since the vertices are linearly independent, we must have $v\cdot f_0\neq 0$. By Lemma~\ref{lem:irred_frac_part2}, we may assume that such a vertex takes the form
\begin{equation}
v=\varepsilon(-e_g + e_{k-1}+ \dots + e_1 + \mu_0 + \dots + \mu_b)
\end{equation}
for some $\varepsilon \in \{\pm 1\}$ and  $g\geq k$ with $\sigma_k=\sigma_g$ and $\sigma_k$ is tight and $\norm{\mu_1}=\dots = \norm{\mu_b}=2$. Since the $\mu_i$ form a linear chain of vertices, we see that $v$ can have non-zero pairing with at most one of them.
However, as we have the following pairings, 
\begin{equation}\label{eq:mu_i_pairing}
\mu_i \cdot v =
\begin{cases}
0 				&\text{if $0<i<b$}\\
\varepsilon 	&\text{if $i=b$}\\
-\varepsilon 	&\text{if $i=b+1$}\\
0 				&\text{if $i>b+1$},
\end{cases}
\end{equation}
we have either $\varepsilon=1$ and $b=0$, or $\varepsilon=-1$ and $b=m$. In the $\varepsilon=1$ and $b=0$ case, this puts $v$ in the form of \eqref{eq:v+type}. In the $\varepsilon=-1$ and $b=m$ case, this puts $v$ in the form of \eqref{eq:v-type}.
\end{proof}

\begin{lemma}\label{lem:nu1_is_vertex}
We may assume that $\nu_1$ is a vertex.
\end{lemma}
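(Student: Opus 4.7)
The plan is to identify a vertex of $\Gamma$ that, after a suitable relabelling of the $e_i$'s, coincides with $\nu_1 = -e_1 + \mu_0$. The strategy parallels the inductive argument of Lemma~\ref{lem:mu_i_are_vertices}. Since $V$ forms a $\Z$-basis for $L$ and $\nu_1 \in L$, at least one vertex $v \in V$ has a non-zero $\nu_1$-coefficient in its expansion with respect to the standard basis $\{\nu_1, \dots, \nu_t, \mu_1, \dots, \mu_m\}$. By Lemma~\ref{lem:key_properties}, $v$ is irreducible and unbreakable and satisfies $\norm{v} \geq \norm{\nu_1} = 1 + \norm{\mu_0}$.

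Since $\nu_1 \cdot f_0 = 1$ while $\mu_i \cdot f_0 = 0$ for $i \geq 1$, a non-zero $\nu_1$-coefficient in the standard-basis expansion of $v$ guarantees $v \cdot f_0 \neq 0$, so Lemma~\ref{lem:irred_frac_part2}(ii) applies. Hence, after possibly negating $v$, we obtain
\[
v = -e_g + e_{k-1} + \dots + e_1 + \mu_0 + \mu_1 + \dots + \mu_b,
\]
with $\sigma_k$ tight, $\sigma_g = \sigma_k$, $k \leq g$, and $\norm{\mu_i} = 2$ for $1 \leq i \leq b$. The heart of the proof is then to reduce to $b = 0$ and $k = 1$. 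For $b = 0$: since $\mu_1, \dots, \mu_m$ are already vertices of $\Gamma$ by Lemma~\ref{lem:mu_i_are_vertices}, the pairings $v \cdot \mu_b$ and $v \cdot \mu_{b+1}$ must lie in $\{-1, 0\}$, and a direct computation using \eqref{eq:mu_i_pairings} shows that $b \geq 1$ is inconsistent with this constraint. For $k = 1$: the presence of additional $e_i$-terms for $1 \leq i \leq k-1$ would force $v$ to pair non-trivially with further standard-basis elements which are themselves vertices of $\Gamma$, violating the tree structure of the plumbing.

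Once $v = -e_g + \mu_0$ is established, the condition $\sigma_g = \sigma_1 = 1$ gives $e_g - e_1 \in L$, so Remark~\ref{rem:relabel} permits swapping $e_g$ and $e_1$; this yields $v = -e_1 + \mu_0 = \nu_1$ as a vertex of $\Gamma$. The principal obstacle is the combined reduction to $b = 0$ and $k = 1$. A secondary obstacle is that, depending on how $v$ first emerges, it may appear in the ``reverse'' orientation analogous to the type-$-$ case of Lemma~\ref{lem:v_form}; this case is handled by an $f_i$-reversal combined with a global change of sign on all vertices, exactly as in the final step of the proof of Lemma~\ref{lem:mu_i_are_vertices}, after which the preceding argument yields $\nu_1$ in standard form.
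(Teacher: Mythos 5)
There is a genuine gap. The engine of the paper's proof is an \emph{upper} bound on the norm of the chosen vertex: expanding $\nu_1$ in the vertex basis, some vertex $v$ of that expansion has $v\cdot f_0\neq 0$, and Lemma~\ref{lem:key_properties}\eqref{it:norm_bound} gives $\norm{v}\leq\norm{\nu_1}=\norm{\mu_0}+1$; fed into the two possible shapes of Lemma~\ref{lem:v_form} this immediately forces $v=-e_g+\mu_0$ or $v=e_g-\mu_0-\dots-\mu_m$. You instead pick a vertex $v$ whose \emph{standard-basis} expansion involves $\nu_1$ and assert $\norm{v}\geq\norm{\nu_1}$ ``by Lemma~\ref{lem:key_properties}''. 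That lemma (and Lemma~\ref{lem:key_inequality} behind it) is a statement about the vertex basis of the quasi-alternating lattice, not about the standard basis of the changemaker lattice, so the inequality is unjustified; moreover it points the wrong way --- it is the bound $\norm{v}\leq\norm{\nu_1}$ that does the work. Your inference that a non-zero $\nu_1$-coefficient forces $v\cdot f_0\neq 0$ is also shaky at this stage: tight standard basis elements $\nu_k$ with $k>1$ also satisfy $\nu_k\cdot f_0=1$ and cannot yet be excluded (the Type~III situation has one), so cancellation in $v\cdot f_0$ is possible.

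Because you lack the upper norm bound, the reduction to $k=1$ becomes the crux, and your argument for it fails: you claim that extra $e_i$-terms would make $v$ pair non-trivially with further standard-basis elements ``which are themselves vertices of $\Gamma$'', but at this point only $\mu_1,\dots,\mu_m$ are known to be vertices (Lemma~\ref{lem:mu_i_are_vertices}); that $\nu_2,\dots,\nu_t$ may be taken as vertices is precisely the content of the later Lemma~\ref{lem:structure}, proved only for Type~I/II vertex sets, and is not available here. A putative vertex $v=-e_g+e_{k-1}+\dots+e_1+\mu_0$ with $k>1$ pairs with the known vertices only via $v\cdot\mu_1=-1$, which is perfectly compatible with the tree structure, so no contradiction arises. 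The remaining ingredients of your proposal --- forcing $b=0$ versus $b=m$ by pairing against the chain $\mu_1,\dots,\mu_m$, swapping $e_g$ and $e_1$ via Remark~\ref{rem:relabel}, and handling the reversed form by the $f_i$-reversal plus a global sign change as at the end of Lemma~\ref{lem:mu_i_are_vertices} --- do match the paper and are fine, but without the correct norm bound (or some substitute for it) the conclusion $v=-e_g+\mu_0$ does not follow.
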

\begin{proof}
Expressing $\nu_1$ as a linear combination of vertices, we see that there must be a vertex $v$ with $v\cdot f_0\ne 0$, and $\norm{v}\leq \norm{\nu_1}=\norm{\mu_0} +1$ by Lemma~\ref{lem:key_properties}. We see that such a vertex must take either the form
\begin{equation}\label{eq:nu_1+form}
v=-e_g + \mu_0,
\end{equation}
coming from \eqref{eq:v+type}, or the form 
\begin{equation}\label{eq:nu_1-form}
v=e_g-\mu_0- \dots - \mu_m,
\end{equation}
coming from \eqref{eq:v-type}. In both cases $\sigma_g=\sigma_1=1$ and in the latter case $\norm{\mu_i}=2$ for $1\leq i \leq m$. By relabelling the $e_i$s, we may assume that $g=1$. Thus there is nothing further to check when $v$ take the form given in \eqref{eq:nu_1+form}. So suppose that $v$ takes the form given in \eqref{eq:nu_1-form}.
In this case, we apply an argument similar to the one at the end of the proof of Lemma~\ref{lem:mu_i_are_vertices}. We can relabel the $f_i$ so as to reverse the order of $f_{\beta_{1}}, \dots, f_s$. Under this relabelling the vertices $\mu_1, \dots, \mu_m$ become $-\mu_m, \dots, -\mu_1$ and $v$ becomes $-\nu_1$. Thus by reversing signs on all vertices, we can assume that $\nu_1, \mu_1, \dots , \mu_m$ are all vertices, as required.
\end{proof}

\begin{lemma}\label{lem:v_endtype}
If $v \not\in \{\nu_1,\mu_1, \dots, \mu_m\}$ is a vertex, then either
\begin{enumerate}[(a)]
\item $v\cdot e_1=0$ and $v\cdot f_i=0$ for all $0\leq i\leq s$ or
\item $v\cdot e_1=1$ and $v\cdot f_i=0$ for all $0\leq i\leq s$ or
\item $\frac{p}{q}=n-\frac{1}{q}$ and $v$ can be assumed to take the form
\[v=e_k-e_{k-1} - \dots - e_1 -\mu_0 - \dots - \mu_m\]
where $k>1$ and $\sigma_k$ is tight. Moreover, there is at most one vertex of type~(c).
\end{enumerate}
\end{lemma}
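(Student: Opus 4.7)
The key observation is that $\nu_1 = -e_1 + \mu_0$ has been shown to be a vertex in Lemma~\ref{lem:nu1_is_vertex}, and the intersection form of a plumbing has off-diagonal entries in $\{-1, 0\}$, so any vertex $v \neq \nu_1$ must satisfy $v \cdot \nu_1 \in \{-1, 0\}$. The plan is to case-split on whether $v \cdot f_i$ is zero for all $i$ or not, applying Lemma~\ref{lem:v_form} in the latter case and using the pairing constraint with $\nu_1$ to cut down the possibilities. If $v \cdot f_i = 0$ for all $i$, then $v$ lies in the sublattice spanned by $e_1, \ldots, e_t$, so $v \cdot \mu_0 = 0$ and thus $v \cdot \nu_1 = -v \cdot e_1 \in \{-1, 0\}$. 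This immediately forces $v \cdot e_1 \in \{0, 1\}$, yielding cases (a) and (b).

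In the remaining case $v \cdot f_i \neq 0$ for some $i$, Lemma~\ref{lem:v_form} provides two possible forms. If $v = -e_g + e_{k-1} + \cdots + e_1 + \mu_0$, one checks directly that $v \cdot \nu_1 = -v \cdot e_1 + \norm{\mu_0} \geq 1$ (treating the subcases $k \geq 2$ and $k = 1, g > 1$ separately; the subcase $k = g = 1$ is excluded because it would give $v = \nu_1$), contradicting $v \cdot \nu_1 \in \{-1, 0\}$. Hence $v$ must take the second form $v = e_g - e_{k-1} - \cdots - e_1 - \mu_0 - \cdots - \mu_m$, for which Lemma~\ref{lem:v_form} already guarantees $\norm{\mu_i} = 2$ for $1 \leq i \leq m$. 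When $k = 1$ a short computation gives $v \cdot \nu_1 \leq -\norm{\mu_0} \leq -2$, a contradiction. Therefore $k \geq 2$, so the coefficient of $e_1$ in $v$ is $-1$ and one obtains $v \cdot \nu_1 = 1 - \norm{\mu_0}$. The constraint $v \cdot \nu_1 \in \{-1, 0\}$ forces $\norm{\mu_0} = 2$, and combined with $\norm{\mu_i} = 2$ for all $i \geq 1$, Lemma~\ref{lem:mui_cont_frac} and Remark~\ref{rem:mu_i_useful_cases} yield $p/q = n - 1/q$. This is exactly case (c), after possibly relabelling the $e_i$ as in Remark~\ref{rem:relabel} to match the form in the statement.

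For uniqueness in case (c), suppose $v$ and $v'$ are two distinct type (c) vertices, with indices $(g, k)$ and $(g', k')$; without loss of generality assume $k \leq k'$. Since $\norm{\mu_i} = 2$ for all $i$ and $\mu_i \cdot \mu_{i+1} = -1$, the $f$-parts of $v$ and $v'$ coincide as $-\sum_{i=0}^m \mu_i$ and contribute $\norm{\sum_{i=0}^m \mu_i} = 2(m+1) - 2m = 2$ to $v \cdot v'$. A direct expansion of the cross terms among the $e_i$'s, using $g \geq k$ and $g' \geq k' \geq k$, shows that the $e$-parts contribute at least $k - 2$ (the worst case being $g \neq g'$ with $k \leq g < k'$), hence $v \cdot v' \geq k \geq 2$. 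This contradicts $v \cdot v' \in \{-1, 0\}$, so at most one type (c) vertex can exist. The main obstacle is purely bookkeeping through the subcases in the pairing computations with $\nu_1$ and between two type (c) vertices; the high-level strategy is straightforward given Lemma~\ref{lem:v_form} and the privileged role of $\nu_1$.
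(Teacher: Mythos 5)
Your overall strategy (feed the two forms of Lemma~\ref{lem:v_form} into the pairing constraint with $\nu_1$) is the paper's strategy, but your key computations for the form \eqref{eq:v-type} are off by one when $m\geq 1$, and this is exactly where the argument breaks. For $v=e_g-e_{k-1}-\dots-e_1-\mu_0-\dots-\mu_m$ with $m\geq 1$ one has $v\cdot \mu_0=-\norm{\mu_0}+1$ (the cross-term $(-\mu_1)\cdot\mu_0=+1$), so $v\cdot\nu_1=-v\cdot e_1-\norm{\mu_0}+1$. Consequently: (1) in your $k=1$ case with $g>1$ and $m\geq 1$ the value is $1-\norm{\mu_0}$, which equals $-1$ when $\norm{\mu_0}=2$, so there is no contradiction and your exclusion of $k=1$ (needed for the ``$k>1$, $\sigma_k$ tight'' part of (c)) fails; (2) in your $k\geq 2$ case with $m\geq 1$ the value is $2-\norm{\mu_0}$, so the constraint $v\cdot\nu_1\in\{0,-1\}$ only gives $\norm{\mu_0}\in\{2,3\}$, and if $\norm{\mu_0}=3$ then by Lemma~\ref{lem:mui_cont_frac} the slope is not of the form $n-\frac1q$, so conclusion (c) does not follow. (Your computations are correct when $m=0$, and your treatment of \eqref{eq:v+type}, of the case $v\cdot f_i=0$ for all $i$, and of the uniqueness of the type~(c) vertex are fine.)

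The missing ingredient is that the pairwise constraint $v\cdot\nu_1\in\{0,-1\}$ is not all the plumbing structure gives you: the adjacency graph of the vertices is the tree $\Gamma$, and $\nu_1,\mu_1,\dots,\mu_m$ form a linear chain in it. Since $v$ of the form \eqref{eq:v-type} already satisfies $v\cdot\mu_m=-1$, it cannot also pair nontrivially with $\nu_1$ without creating a cycle; hence for $m\geq 1$ one must have $v\cdot\nu_1=0$, i.e. $-v\cdot e_1-\norm{\mu_0}+1=0$, which forces $v\cdot e_1=-1$ (so $k>1$) and $\norm{\mu_0}=2$, and then Remark~\ref{rem:mu_i_useful_cases} gives $\frac{p}{q}=n-\frac1q$. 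This is precisely how the paper closes the cases your pairing-value argument leaves open; with that observation inserted, the rest of your write-up goes through.
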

\begin{proof}
Let $v\neq \nu_1,\mu_1, \dots, \mu_m$ be a vertex with $v\cdot f_i\neq 0$ for some $i$. By Lemma~\ref{lem:v_form}, there are two possible forms for $v$. First assume that $v$ takes the form given in $\eqref{eq:v+type}$. In this case, we have $v\cdot \nu_1\geq \norm{\mu_0}-1>0$, which is impossible unless $v=\nu_1$. Thus $v$ must take the form given in \eqref{eq:v-type}.

If $m=0$, then 
\[
  v\cdot \nu_1=-\norm{\mu_0}-v\cdot e_1 \in \{ -\norm{\mu_0} \pm 1, -\norm{\mu_0} \}.
\]
However since $v$ and $\nu_1$ are both vertices $v\cdot \nu_1\in \{0,-1\}$. As $\norm{\mu_0}\geq 2$, this implies that $v\cdot e_1=-1$ and $\norm{\mu_0}=2$. This implies that $q=2$ and $k>1$ (cf. Remark~\ref{rem:mu_i_useful_cases}).  
If $m>0$, the argument is similar. Since $\mu_m\cdot v=-1$ and $\nu_1,\mu_1,\ldots,\mu_m$ form a linear chain of vertices, we must have $v\cdot \nu_1=0$. This implies that
\[v\cdot \nu_1 = -(\norm{\mu_0}-1) -v\cdot e_1 =0.\]
This shows that $\norm{\mu_0}=2$ and $v\cdot e_1=-1$. In either case this shows that $p/q$ takes the form $\frac{p}{q}=n-\frac{1}{q}$ (cf. Remark~\ref{rem:mu_i_useful_cases}). Since $v\cdot e_1=-1$, it follows that $k>1$.

To see that such a $v$ is necessarily unique, suppose that $v$ and $w$ are both vertices of the form given in \eqref{eq:v-type}. For such vertices we have 
\[v\cdot w \geq \norm{\mu_0+ \dots +\mu_m} -1 >0,\]
which is impossible, unless $v=w$.

Given that such a $v$ is unique and $k>1$, we see that there is no loss of generality in relabelling the $e_i$ to assume that $g=k$. This shows that $v$ can be taken in the form given by~(c).

Finally, consider the case that $v$ is a vertex $v$ with $v\cdot f_i=0$ for all $i$. Since $\nu_1$ is a vertex, we have
\[v\cdot \nu_1= -v\cdot e_1 \in \{0,-1\}.\]
This shows that $v$ is in the form described by (a) or (b), as required.
\end{proof}

\begin{figure}[!ht]
  \centerline{
    \begin{overpic}[width=0.5\textwidth]{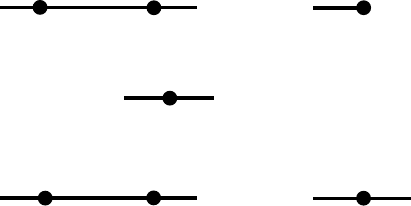}
      \put (-35,47) {\large Type~I:}
      \put (9,42) {$\nu_1$}
      \put (35,42) {$\mu_1$}
      \put (57,48.5) {\Large $\dots$}
      \put (86,42) {$\mu_m$}
      \put (-35,25) {\large Type~II:}
      \put (40,20) {$\nu_1$}
      \put (-35,0) {\large Type~III:}
      \put (9,-4) {$\nu_1$}
      \put (36,-4) {$\mu_1$}
      \put (57,2.4) {\Large $\dots$}
      \put (85,-4) {$\mu_{q-2}$}
    \end{overpic}
  }
\caption{The three types.}
\label{fig:3_types}
\end{figure}

Given a vertex $v\ne \nu_1, \mu_1, \dots, \mu_m$, we refer to it as being of type (a), (b) or (c) if it satisfies conditions (a), (b) or (c) from Lemma~\ref{lem:v_endtype}, respectively. This allows us to show that the vertex set satisfies the following trichotomy.

\begin{lemma}\label{lem:vertex_set_trichotomy}
The vertex set takes one and only one of the following forms:
\begin{enumerate}[I.]
\item There are no type~(c) vertices and $\nu_1$ is adjacent to a single vertex of type~(b),
\item $\frac{p}{q}=n-\frac{q-1}{q}$ and $\nu_1$ is adjacent to two vertices of type~(b),
\item or $\frac{p}{q}=n-\frac{1}{q}$ and there is a unique vertex of type~(c) and at most one vertex of type~(b).
\end{enumerate}
\end{lemma}
\begin{proof}
As we are assuming that the central vertex is of norm two and $\norm{\nu_1}>2$, we see that $\nu_1$ is not the central vertex of $\Gamma$. Thus $\nu_1$ pairs with at most two vertices in the graph. Since a vertex of type~(b) is always adjacent to $\nu_1$, this shows there are at most two vertices of type~(b). 

Suppose that the vertex set contains two vertices of type~(b). We will show that the vertex set takes the Type~II form. Since both vertices of type~(b) pair with $\nu_1$, the vertex $\mu_1$ cannot exist and hence $p/q$ takes the form $\frac{p}{q}=n-\frac{q-1}{q}$ by Remark~\ref{rem:mu_i_useful_cases}. 
If the vertex set further contains a vertex of type~(c), since such a vertex would be a third vertex adjacent to $\nu_1$. Thus the vertex set takes the Type~II form. 

Now suppose that the vertex set contains a vertex of type~(c). We will show that the vertex set takes the Type~III form. By the argument in the previous paragraph, the vertex set contains at most one vertex of type~(b). By Lemma~\ref{lem:v_endtype} the existence of a vertex of type~(c), shows that $p/q$ takes the form $\frac{p}{q}=n-\frac{1}{q}$. This puts the vertex in the form Type~III.

Finally suppose that the vertex set contains no vertex of type~(c) and at most one vertex of type~(b). Note that if there is no vertex of type~(b) then the graph $\Gamma$ would have a connected component consisting of the linear chain $\nu_1, \mu_1, \dots, \mu_m$, which is incompatible with our assumptions on $\Gamma$. Thus the vertex set contains a unique vertex of type~(b) and is hence of the form given by Type~I, as required.
\end{proof}
The local structure of each these three types is shown in Figure~\ref{fig:3_types}.
It turns out that a Type~I vertex set corresponds to surgery on a torus knot. Type~II and Type~III vertex sets both correspond to surgery on a cable of a torus knot.

\subsection{Type~I and Type~II}
Now that we understand vertices pairing nontrivially with the $f_i$, we turn our attention to the remaining vertices. In the case where there are no vertices of type~(c), these vertices can be taken to be exactly the standard basis elements. 
\begin{lemma}\label{lem:structure}
If the vertex set of $\Gamma$ is of Type~I or Type~II, then we can assume that the vertices are the standard basis elements and that these are all gapless.
\end{lemma}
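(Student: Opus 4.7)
The plan is to proceed by induction on $k$ from $2$ to $t$, showing that after suitable relabelings of the $e_i$ and $f_j$ (as permitted by Remark~\ref{rem:relabel}), the standard basis vector $\nu_k$ is a vertex of $\Gamma$ in gapless form. By Lemmas~\ref{lem:nu1_is_vertex} and \ref{lem:mu_i_are_vertices} we already have $\nu_1, \mu_1, \ldots, \mu_m$ as vertices, and by Lemma~\ref{lem:v_endtype} every remaining vertex is of type~(a) or type~(b), since no type~(c) vertices appear in Type~I or Type~II. In particular, every remaining vertex $v$ satisfies $v \cdot f_i = 0$ for all $i$, so lies in $L \cap \langle e_1, \ldots, e_t \rangle$.

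For the base case ($k=2$), I would start from a vertex $v$ adjacent to $\nu_1$ that is not among the $\mu_j$: in Type~I this is the unique type~(b) vertex, and in Type~II one can pick either of the two type~(b) vertices adjacent to $\nu_1$. The constraints $v \cdot e_1 = 1$, $v \cdot f_i = 0$, $v \cdot w_0 = 0$, and $v \cdot \nu_1 = -1$, combined with irreducibility and unbreakability (Lemma~\ref{lem:key_properties}), the changemaker condition (Proposition~\ref{prop:CMcondition}), and the norm lower bound of Lemma~\ref{lem:key_inequality}, should force $v$ to take a gapless form such as $v = -e_g + e_1$ (when $\sigma_g = 1$) or more generally $v = -e_g + e_{j-1} + \cdots + e_1$ (when $\sigma_g$ is tight). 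Relabeling $e_g \leftrightarrow e_2$ identifies $v$ with $\nu_2$ in gapless form.

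For the inductive step, suppose $\nu_2, \ldots, \nu_{k-1}$ have been identified as gapless vertices. The next unidentified vertex $v$ either continues along the current arm (by being adjacent to $\nu_{k-1}$), is the central vertex of $\Gamma$ (norm $2$ and degree $3$), or begins a new arm emerging from the central vertex. The adjacency $v \cdot \nu_{k-1} = -1$, the gapless form $\nu_{k-1} = -e_{k-1} + e_{k-2} + \cdots + e_l$, the vanishing pairings of $v$ with the other already-identified vertices, and irreducibility/unbreakability of $v$ together should force $v$ to take the form $-e_g + e_{k-1}$, which after relabeling $e_g \leftrightarrow e_k$ is exactly $\nu_k$ gapless. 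When $v$ is central, its three neighbors are the heads of the three arms and the induction branches accordingly. In Type~II, the same analysis is applied symmetrically to the two type~(b) vertices adjacent to $\nu_1$, building two arms outward from $\nu_1$ and eventually meeting the central vertex elsewhere.

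The main obstacle will be rigorously ruling out non-gapless configurations of the identified vertices, and carefully handling the branching at the central vertex (where three neighbors force strong constraints on the local lattice structure). I anticipate this requires a detailed case analysis leveraging Lemma~\ref{lem:irred_frac_part2} for the classification of irreducible/unbreakable vectors, Lemma~\ref{lem:key_inequality} for norm constraints on the vertices, Lemma~\ref{lem:same_sign} to control signs of coefficients, and Proposition~\ref{prop:CMcondition} to restrict the admissible $\sigma_i$-configurations so that the ``greedy'' decomposition underlying $A_k$ actually produces gapless intervals. Once all three arms have been traced, the total count of identified vertices equals the rank $t+m$ of $L$, so the vertex basis must coincide with the standard basis and the proof is complete.
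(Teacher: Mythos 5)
Your proposal has the right starting points (Lemmas~\ref{lem:mu_i_are_vertices}, \ref{lem:nu1_is_vertex}, \ref{lem:v_endtype} reduce everything to vertices pairing trivially with all $f_i$), but the core of the inductive step is missing, and the tools you cite cannot supply it. Every unidentified vertex lies in $L\cap\langle e_1,\dots,e_t\rangle$, and for such vectors Lemma~\ref{lem:irred_frac_part2} says nothing at all: it only classifies irreducible unbreakable vectors with $v\cdot f_i\neq 0$ for some $i$. Irreducibility, unbreakability and adjacency to $\nu_{k-1}$ do not by themselves rule out a vertex whose support among $e_1,\dots,e_t$ is scattered, has coefficients of absolute value $\geq 2$, or fails to end at the last identified coordinate; this is exactly the ``main obstacle'' you flag, and your plan offers no mechanism to overcome it. Moreover, your stated target for the inductive step, $v=-e_g+e_{k-1}$, is too restrictive: vertices along an arm can be gapless vectors $-e_g+e_{g-1}+\cdots+e_l$ of norm bigger than two, so an induction organized by walking along the arms and only looking at the neighbour of the last vertex does not even have the correct statement to prove.

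The paper's proof runs the induction differently and supplies the missing mechanism. Assuming $\nu_1,\dots,\nu_k,\mu_1,\dots,\mu_m$ are vertices, it expands the next \emph{standard basis} vector $\nu_{k+1}$ (not the next graph vertex) in the vertex basis, uses Lemma~\ref{lem:same_sign} plus the pairing of $\nu_{k+1}+e_{k+1}$ with $w_0$ to force all coefficients to be non-negative, and then compares the $\ell_1$-norm identity $\lone{\nu^+}=\norm{\nu^+}$ with the quasi-alternating lower bound of Lemma~\ref{lem:key_inequality}; the resulting inequality
\[
\sum_j \beta_j\bigl(\norm{v_j^+}-\lone{v_j^+}+\norm{v_j'}-2\bigr)\leq -1
\]
forces some unidentified vertex to have exactly one coordinate outside $\{e_1,\dots,e_k\}$ (with coefficient $-1$) and $0/1$ coefficients forming an interval ending at $e_k$, i.e.\ the gapless shape; a final pairing with $w_0$ gives $\sigma_g=\sigma_{k+1}$ so one may relabel and conclude $\nu_{k+1}$ is a vertex. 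Without an argument of this kind (or some substitute for the $\ell_1$-versus-norm counting combined with Lemma~\ref{lem:key_inequality}), your adjacency-driven case analysis does not close, so as written the proposal has a genuine gap.
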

\begin{proof}
We prove inductively that we can take the vertices to be standard basis elements. By Lemma~\ref{lem:mu_i_are_vertices} and Lemma~\ref{lem:nu1_is_vertex}, we can assume that $\nu_1, \mu_1,\dots, \mu_m$ are vertices. This is the base case.

Now assume that $\mu_1,\dots, \mu_m, \nu_1, \dots, \nu_k$ are all vertices.
\begin{claim}
Suppose that $v$ is a vertex, which is not one of the $\mu_1,\dots, \mu_m, \nu_1, \dots, \nu_k$. Then $v$ has the following properties:
\begin{enumerate}[(i)]
\item $v\cdot f_i=0$ for all $i$,
\item $v\cdot e_i\geq 0$ for $1\leq i \leq k$ and
\item if $v\cdot e_j>0$ for some $j<k$, then $v\cdot e_i>0$ for all $j\leq i\leq k$.
\end{enumerate}
\end{claim}
\begin{proof}[Proof of Claim.]
 By the assumption that there are no type~(c) vertices, we have $v\cdot f_i=0$ for all $i$. Now suppose that $v\cdot e_i \neq 0$ for some $1\leq i \leq k$. Let $l\geq 1$ be minimal such that $v\cdot e_l \neq 0$. In this case we have $v\cdot \nu_l= -v\cdot e_l$. As $v$ and $\nu_l$ are both vertices then this shows that $v\cdot e_l= 1$. Now let $g>l$ be minimal such that $v\cdot e_g \leq 0$.
By Remark~\ref{rem:Standard_basis_facts}, we have that $\nu_{g}\cdot e_{g-1}=1$. Therefore we see that 
\[v\cdot \nu_g\geq -v\cdot e_g +v\cdot e_{g-1}>0.\] From this we conclude that  either $v=\nu_g$ or $\nu_g$ is not a vertex. In either case this implies $g>k$. This gives $(ii)$ and $(iii)$.
\end{proof}
Let $v_1, \dots, v_N$, be the vertices which are not already known to be standard basis elements. The preceding claim shows that each $v_j$ can be written as $v_j=v_j'+v_j^+$, where
\[v_j'\cdot e_i=0 \quad\text{for $i\leq k$}\]
and
\[
v_j^+\cdot e_i\geq 0 
\quad\text{for $i\leq k$, and }
v_j^+\cdot e_i= 0 \quad\text{for $i> k$.}
\]

Now consider $\nu_{k+1}$. There are integer $\alpha_i$ and $\beta_j$ such that
\begin{equation}\label{eq:nu_k+1_sum}
\nu_{k+1}=\sum_{i=1}^k \alpha_i \nu_i + \sum_{j=1}^N \beta_j v_j
\end{equation}
{\em A priori} one might expect the $\mu_i$ to appear in this sum. However it follows from considering the pairing with the $f_i$ that there is no need to include them. By construction of the standard basis vectors $\nu_{k+1} \cdot f_i$ can be non-zero only if $i\leq \beta_1$. If there were $\mu_i$ appearing in the sum \eqref{eq:nu_k+1_sum}, then we would have $\nu_k \cdot f_i\neq 0$ for some $i>\beta_1$, contradicting this.

Since $\nu_{k+1}$ is irreducible, Lemma~\ref{lem:same_sign} shows that all non-zero $\alpha_i$ and $\beta_j$ must have the same sign.

Now if we write $\nu_{k+1}$ in the form $\nu_{k+1}=-e_{k+1}+ \nu^+$, then \eqref{eq:nu_k+1_sum} yields
\begin{equation}\label{eq:nu+_sum}
\nu^+=\sum_{i=1}^k \alpha_i \nu_i + \sum_{j=1}^N \beta_j v^+_j.
\end{equation}
By taking the pairing of \eqref{eq:nu+_sum} with $w_0$ and observing that, by construction, $w_0\cdot v_j=0$ for all $j$, we obtain
\begin{equation}\label{eq:sigma_k_dot}
\sigma_{k+1}=\nu^+\cdot w_0=\sum_{j=1}^N \beta_j (v^+_j \cdot w_0).
\end{equation}
Since $\sigma_{k+1}>0$ and $v^+_j \cdot w_0\geq 0$ for all $j$, this shows that the $\alpha_i$ and $\beta_j$ must all be non-negative.

Let $\lone{x}$ denote the $\ell_1$-norm
\[\lone{x}=\sum_{i=1}^{t} |x\cdot e_i| + \sum_{j=0}^s |x\cdot f_j|.\]
Since the coefficients of $\nu^+$ are equal to 0 or 1, we have $\lone{\nu^{+}}=\norm{\nu^{+}}$. However by writing $\nu^+$ as a sum in \eqref{eq:nu+_sum} and computing $\lone{\nu^+}$ we obtain
\begin{equation}\label{eq:nu+_norm}
\norm{\nu^{+}}=\sum_{i=1}^k \alpha_i (\lone{\nu_i}-2) + \sum_{j=1}^N\beta_i \lone{v_i^{+}},
\end{equation}
where the $\lone{\nu_i}-2$ terms come from the fact that $\nu_i\cdot e_i=-1$ and $\nu_i\cdot e_j\geq 0$ for $j\neq i$.

By the inequality in Lemma~\ref{lem:key_inequality} we have the bound:
\begin{align}\begin{split}\label{eq:applied_ineq}
\norm{\nu_{k+1}} &=\norm{\nu^{+}}+1\\
&\geq 2+ \sum_{i=1}^k \alpha_i (\norm{\nu_i}-2) + \sum_{j=1}^N\beta_j(\norm{v_j}-2)\\
&=2+ \sum_{i=1}^k \alpha_i (\lone{\nu_i}-2) + \sum_{j=1}^N \beta_j(\norm{v_j^{+}}+ \norm{v_j'}-2) \\
&=\norm{\nu^{+}} +2 + \sum_{j=1}^N \beta_j(\norm{v^+_j}-\lone{v^+_j}+\norm{v_j'}-2),
\end{split}\end{align}
where \eqref{eq:nu+_norm} was used to obtain the last line.
Comparing the first and last lines in \eqref{eq:applied_ineq} shows that
\begin{equation}\label{eq:v'_bounds}
 \sum_{j=1}^N \beta_i(\norm{v^+_i}-\lone{v^+_i}+\norm{v_i'}-2)\leq -1.
\end{equation}
Since there must be at least one negative summand on the left hand side of \eqref{eq:v'_bounds}, we can assume that
\[ 
\norm{v^+_1}-\lone{v^+_1}+\norm{v_1'}\leq 1\quad\text{and}\quad \beta_1\geq 1.
\]
Since $\norm{v_1'}\geq 1$ and $\norm{v^+_1}\geq\lone{v^+_1}$, we must have $\norm{v_1'}= 1$ and $\norm{v^+_1}=\lone{v^+_1}$
However $\norm{v^+_1}=\lone{v^+_1}$ only if $v^+_1\cdot e_j\in \{0,\pm 1\}$ for all $j$. By the restrictions on $v_1$ proven in the claim at the start of the proof, this shows that $v_1$ takes the form
\begin{equation}\label{eq:v_1_form}
v_1=-e_g+e_k+ \dots + e_l
\end{equation}
for some $g>k$ and $l\leq k$.

Since $g>k$ we have that $\sigma_g \geq \sigma_{k+1}$. On the other hand, the condition $v_1\cdot w_0=0$ implies that $v_1^+\cdot w_0=e_g\cdot w_0=\sigma_g$. Furthermore, by computing as in \eqref{eq:sigma_k_dot} and using the fact that $\beta_1\geq 1$ and that $\beta_j (v^+_j \cdot w_0)\geq 0$ for each $j$ shows that 
\[\sigma_{k+1}= \sum_{i=1}^N \beta_i (v^+_j \cdot w_0)\geq v^+_1 \cdot w_0.\]
Thus we have $\sigma_{k+1}=\sigma_g$. By relabelling we can assume that $v_1=-e_{k+1}+e_k+ \dots + e_l$. As mentioned in Remark~\ref{rem:Standard_basis_facts} it follows that $\nu_{k+1}=v_1$ is a gapless standard basis vector. Thus we have shown we may assume that $\nu_{k+1}$ is vertex. This completes the inductive step of the proof. 
\end{proof}
This has several useful consequences.
\begin{remark}\label{rem:structure_consequences} Suppose that $\Gamma$ is a plumbing whose intersection form is isomorphic to a $p/q$-changemaker lattice $L$ with Type~I or Type~II vertex set.
\begin{enumerate}[(i)] 
\item\label{it:L_determines_Gamma} Since the vertices can be taken to be standard basis elements of $L$, the plumbing graph $\Gamma$ is completely determined by $L$. 
\item\label{it:L_has_no_tight} $L$ can have no tight standard basis elements except $\nu_1$. Since a Type~III vertex set implies the existence of a tight standard basis element, this shows that the type of vertex set is intrinsic to the lattice $L$ rather than the plumbing $\Gamma$ or the choice of vertex set.
\item\label{it:local_struc} Since there can be no tight standard basis elements we have 
  $\nu_2=-e_2+e_1$
  as one type~(b) vertex. In the Type~II case, the other type~(b) vertex must take the form
\[-e_g+e_{g-1}+ \dots + e_1,\]
for some $g>1$. This shows that $\Gamma$ takes the form shown in Figure~\ref{fig:further_structure}. Recall that in the Type~II case, there is no vertex $\mu_1$ (cf. Remark~\ref{rem:mu_i_useful_cases}(ii)).
\end{enumerate}
\end{remark}

\begin{figure}[!ht]
  \centerline{
    \begin{overpic}[width=0.5\textwidth]{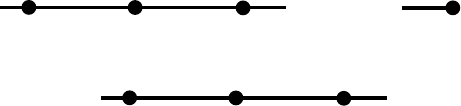}
      \put (-25,20) {\large Type~I:}
      \put (5,15.5) {$\nu_2$}
      \put (29,15.5) {$\nu_1$}
      \put (50,15.5) {$\mu_1$}
      \put (96,15.5) {$\mu_m$}
      \put (71,21) {\Large $\dots$}
      \put (-25,0) {\large Type~II:}
      \put (26,-3.3) {$\nu_2$}
      \put (50,-3.3) {$\nu_1$}
      \put (73,-3.3) {$\nu_g$}
    \end{overpic}
  }
\caption{Further structure of $\Gamma$ in the Type~I and Type~II cases.}
\label{fig:further_structure}
\end{figure}

The following lemma shows that under some circumstances the converse to Remark~\ref{rem:structure_consequences}\eqref{it:L_determines_Gamma} holds. It will be useful for recovering the Alexander polynomial from the structure of $\Gamma$.

\begin{lemma}\label{lem:large_q_rigidity}
Suppose that $q$ is a positive integer and $\Gamma$ is a plumbing graph with intersection form isomorphic to a $(N+\frac{1}{q})$-changemaker lattice $L$ for some integer $N \ge 0$. If $q$ is larger than number of vertices of $\Gamma$, then $L$ and, hence $N$, are uniquely determined by $\Gamma$.\footnote{The value of $N$ can also be determined by comparing the discriminant of both lattices.}
\end{lemma}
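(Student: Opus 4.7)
The plan is to reconstruct the parameters $q$ and $N$ directly from $\Gamma$ by isolating one distinguished vertex. Since $\frac{p}{q}=N+\frac{1}{q}=(N+1)-\frac{q-1}{q}$, we are in the case $r=q-1$ of the standard basis setup, so Lemma~\ref{lem:mui_cont_frac} (equivalently Remark~\ref{rem:mu_i_useful_cases}(ii)) gives $\norm{\mu_0}=q$ with no $\mu_i$ appearing in the standard basis of $L$ for $i\geq 1$. Consequently $\nu_1=-e_1+\mu_0$ has $\norm{\nu_1}=q+1$.

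The next step is to invoke the structural results developed earlier in Section~\ref{sec:non_int_surgeries}. By Lemma~\ref{lem:nu1_is_vertex} we may assume $\nu_1$ is a vertex of $\Gamma$. For $q\geq 3$ the trichotomy preceding Figure~\ref{fig:3_types} rules out Type~III and so places us in Type~I or Type~II, whereupon Lemma~\ref{lem:structure} ensures that, after a suitable relabelling, the full vertex set of $\Gamma$ is $\{\nu_1,\nu_2,\dots,\nu_t\}$ with each $\nu_k$ for $k\geq 2$ gapless of the form $\nu_k=-e_k+e_{k-1}+\dots+e_l$. In particular $\norm{\nu_k}=k-l+1\leq k\leq t=V$, where $V$ denotes the number of vertices of $\Gamma$.

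Now the hypothesis $q>V$ forces $\norm{\nu_1}=q+1>V+1$, while every other vertex has norm at most $V$. Hence $\nu_1$ is intrinsically characterized as the unique vertex of $\Gamma$ of weight exceeding $V$, and its weight recovers $q=\norm{\nu_1}-1$. Combined with $p=|\det Q_\Gamma|$, an invariant of $\Gamma$, this yields $N=(p-1)/q$; with the pair $(N,q)$ in hand, the changemaker lattice $L$ is pinned down.

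The only delicate point I anticipate is the boundary case $q=2$, where the Type~II and Type~III branches of the trichotomy collapse (both giving $\frac{p}{q}=n-\frac{1}{2}$), so Lemma~\ref{lem:structure} does not apply as stated. However, $q>V$ then forces $V\leq 1$, and the resulting one-vertex plumbing has intersection form a single integer, so the conclusion follows immediately by comparing discriminants.
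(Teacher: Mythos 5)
Your identification of $\nu_1$ is correct and coincides with the first half of the paper's own argument: since $N+\frac{1}{q}=(N+1)-\frac{q-1}{q}$, there are no $\mu_i$ with $i\geq 1$, the vertices may be taken to be the gapless standard basis elements $\nu_1,\dots,\nu_t$, every $\nu_k$ with $k>1$ satisfies $\norm{\nu_k}\leq k\leq t$, and $q>t$ forces $\nu_1$ to be the unique vertex of norm $q+1$, which recovers $q$ (and then $N$, e.g.\ from the discriminant, as the footnote to the lemma allows).

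The gap is your final step: ``with the pair $(N,q)$ in hand, the changemaker lattice $L$ is pinned down.'' A $p/q$-changemaker lattice is determined by $p/q$ \emph{together with its stable coefficients}, not by $p/q$ alone; for fixed $N$ and $q$ there are in general many admissible changemaker tuples (any tuple with $\sum_i\sigma_i^2=N$), giving different lattices. Nor does adding the abstract isomorphism $L\cong Q_\Gamma$ close the gap without further work: the example immediately following the lemma exhibits two distinct $133/2$-changemaker lattices -- same $p$ and $q$, different stable coefficients -- isomorphic to the same plumbing lattice, so ``abstract lattice plus $p/q$'' does not by itself determine the changemaker structure; the hypothesis $q>t$ must be used again. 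Determining the stable coefficients is exactly what the lemma is needed for later (it is how the Alexander polynomial is recovered from $\Gamma$ in Lemma~\ref{lem:lattices_to_surgery}), and it is the second half of the paper's proof, which you omit: having singled out $\nu_1$, one shows inductively that any vertex $v$ pairing with an already-embedded $\nu_l$ must be embedded as $v=-e_g+e_{g-1}+\dots+e_l$ with $g=l+\norm{v}-1$ (to achieve $v\cdot\nu_l=-1$ and the correct norm), so the entire embedding, hence the standard basis and the stable coefficients of $L$, is forced by $\Gamma$. You need this rigidity argument (or an equivalent) to conclude. Two minor remarks: your $q=2$ fallback is vacuous in context, since $\Gamma$ is a star-shaped plumbing with at least four vertices, so $q>t\geq 4$; and in that fallback ``comparing discriminants'' would again only determine $N$, not $L$.
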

\begin{proof}
Since $\Gamma$ must have at least four vertices $q>2$. Thus there can be no vertices of type~(c), showing that the vertex set must be of Type~I or Type~II. Recall from Remark~\ref{rem:mu_i_useful_cases} that there are no vertices of the form $\mu_i$ when $p/q$ takes the form $\frac{p}{q}=n+\frac{1}{q}$. Thus by Lemma~\ref{lem:structure} we can assume that the vertices are the standard basis elements $\nu_1, \dots, \nu_t$. For $k>1$, we have
\[\norm{\nu_k}\leq k\leq t,\]
where the upper bound involving $k$ comes from observing that the largest possible norm of a non-tight standard basis element occurs when $\nu_k=-e_k+e_{k-1}+\dots + e_1$. However, using Lemma~\ref{lem:mui_cont_frac} we have that $\norm{\nu_1}=q+1$.
Therefore, the assumption that $q>t$ implies that $\nu_1$ is the unique vertex of norm $q+1$ in $\Gamma$. Now we can see inductively that the remaining vertices have unique embeddings as gapless standard basis elements. If we have a vertex $v$, whose image is not among $\nu_1, \dots, \nu_k$, but pairs with some $\nu_l$ for $l\leq k$, then $v$ must be embedded as $v=-e_g + e_{g-1}+ \dots + e_l$, where $g=l+\norm{v} - 1$, in order to ensure that $v\cdot \nu_l=-1$ and $v$ has the correct norm. Thus the choice of $\nu_1$ determines the rest of the embedding and hence the standard basis vectors of $L$. However, one can easily recover the structure of $L$ from its standard basis elements.
\end{proof}
The following example shows that the requirement that $q$ be sufficiently large is a necessary for the conclusion of Lemma~\ref{lem:large_q_rigidity} to hold.
\begin{eg}
The two $133/2$-changemaker lattices
\[
\langle f_1-f_0, f_0+e_1+e_2+e_3+2e_4+3e_5+5e_6+5e_7 \rangle^\bot
\]
and
\[
\langle f_1-f_0, f_0+e_1+e_2+2e_3+2e_4+2e_5+4e_6+6e_7 \rangle^\bot
\]
are both isomorphic to the same plumbing lattice. This can be seen by writing down the standard bases in each case. This example arises from the fact that $133/2$-surgery on $T_{5,13}$ and the $(2,33)$-cable of $T_{3,5}$ both yield the Seifert fibered space $S^2(2;\frac{13}{5},\frac{5}{3},\frac{3}{1})$.
\end{eg}

\subsection{The marked vertex}\label{sec:marked}
Now let $\Delta$ be a star-shaped or linear plumbing whose intersection form is isomorphic to an $(n-\frac{1}{2})$-changemaker lattice $L'$ by an isomorphism which carries the vertices of $\Delta$ to gapless standard basis elements of $L'$. We define the {\em marked vertex} of $\Delta$ to be the vertex of $\Delta$ which corresponds to $\nu_1=-e_1+f_0+f_1$. Note that this definition depends {\em a priori} on the lattice $L'$ and the choice of isomorphism. In practice, we will always have a fixed lattice $L'$ and a choice of isomorphism in mind, so it will be convenient to think of the marked vertex as being a property of $\Delta$. Although we will be primarily interested in the case where $\Delta$ is a star-shaped plumbing with $e=2$, we extend the definition to include the degenerate case that $\Delta$ is a linear plumbing as these will arise in the course of some ensuing proofs.
\begin{eg}\label{eg:marked}
Consider the $\frac{17}{2}$-changemaker lattice
\[
L'=\langle f_1 -f_0, f_0 + e_1 +e_2+e_3+e_4 + 2e_5 \rangle^\bot
\]
The standard basis elements for this lattice are $\nu_1 = -e_1 + f_0 + f_1$, $\nu_2=-e_2+e_1$, $\nu_3=-e_3+e_2$, $\nu_4=-e_4+e_3$ and $\nu_5=-e_5 + e_4 + e_3$. These are gapless and form the set of vertices for a plumbing $\Delta$ shown in Figure~\ref{fig:example_marked}.

\begin{figure}[!ht]
  \centerline{
    \begin{overpic}[width=0.4\textwidth]{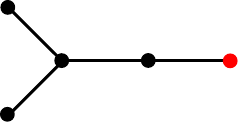}      
      \put (-20,25) {\LARGE $\Delta$}
      \put (-8,50) {$\nu_4$}
      \put (25,15.5) {$\nu_3$}
      \put (60,15.5) {$\nu_2$}
      \put (96,15.5) {$\nu_1$}
      \put (-8,0) {$\nu_5$}
      \put (8,50) {$2$}
      \put (25,30) {$2$}
      \put (60,30) {$2$}
      \put (96,30) {$3$}
      \put (8,0) {$3$}
    \end{overpic}
  }
\caption{The plumbing corresponding to Example~\ref{eg:marked} with the marked vertex indicated in red.}
\label{fig:example_marked}
\end{figure}
\end{eg}

For each changemaker lattice isomorphic to the intersection form of a plumbing graph $\Gamma$ with $e=2$, we will produce a plumbing graph $\Delta$ whose intersection form is isomorphic to a half-integer changemaker lattice with vertices mapping to gapless standard basis elements such that $\Gamma$ is obtained by modifying $\Delta$ near its marked vertex. We will then use this $\Delta$ to construct a knot in $S^3$ which surgers to give the Seifert fibered space corresponding to $\Gamma$.

First we show how to obtain an appropriate $\Delta$. In the Type~I and Type~II case this is an easy consequence of Lemma~\ref{lem:structure}. Recall that the stable coefficients of a changemaker lattice are defined in Definition~\ref{def:CMlattice}.
\begin{lemma}\label{lem:local_mod_type_I/II}
  Let $L$ be a $p/q$-changemaker lattice, where $\frac{p}{q}=n-\frac{r}{q}$ with $1\leq r<q$. Suppose that $L$ isomorphic to the intersection form of a plumbing $\Gamma$ with $e=2$ and the vertex set is of Type~I or Type~II. Then the $(n-\frac{1}{2})$-changemaker lattice $L'$ with the same stable coefficients as $L$ is isomorphic to the intersection form of a plumbing $\Delta$, where the vertex set is of Type~I or Type~II. Moreover $\Gamma$ is obtained by replacing the marked vertex of $\Delta$ by a chain of vertices of weights $\norm{\nu_1}, \norm{\mu_1},\dots, \norm{\mu_m}$. See Figure~\ref{fig:type_I/II_mod}.
\end{lemma}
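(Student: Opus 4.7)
The plan is to build $\Delta$ directly from the standard basis of $L'$ and then match it against $\Gamma$ vertex by vertex. The principal observation is that the sequence of stable coefficients and the sets $A_k$ (from Section~\ref{sec:standard_bases}) depend only on the tuple $(\sigma_1,\ldots,\sigma_t)$ together with which $\sigma_k$ are tight; since $L$ and $L'$ share the same stable coefficients, they share this data. Combined with the fact that $L$ (and hence $L'$) has no tight standard basis elements (Remark~\ref{rem:structure_consequences}\eqref{it:L_has_no_tight}), each standard basis vector $\nu_j = -e_j + \sum_{i \in A_j} e_i$ with $j \ge 2$ involves only the $e_\bullet$'s and so admits a literally identical expression as a vector $\nu'_j \in L'$. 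Only the first basis vector changes: in $L$, $\nu_1 = -e_1 + \mu_0$ with $\norm{\nu_1} = 1 + \norm{\mu_0}$, while in $L'$ the unique surviving $\mu$-vector is $\mu'_0 = f'_0 + f'_1$ of norm $2$ (Remark~\ref{rem:mu_i_useful_cases}), so $\nu'_1 = -e_1 + f'_0 + f'_1$ has $\norm{\nu'_1} = 3$.

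I then take $\Delta$ to be the weighted graph on $V' = \{\nu'_1, \nu'_2, \ldots, \nu'_t\}$ whose weights and edges are read off from the pairings in $L'$. By Lemma~\ref{lem:standard_basis} (with $m' = 0$ for $L'$, so no $\mu'_i$ appears) $V'$ is a basis of $L'$. The pairings match those in $L$: for $i,j \ge 2$, $\nu'_i \cdot \nu'_j = \nu_i \cdot \nu_j$ because the expressions in the $e_\bullet$'s are identical, and for $j \ge 2$, $\nu'_1 \cdot \nu'_j = -\mathbb{1}[1 \in A_j] = \nu_1 \cdot \nu_j$ since both $\mu_0$ and $\mu'_0$ pair trivially with any $\nu_j$ using only the $e_\bullet$'s. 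Therefore every off-diagonal entry of the intersection matrix of $\Delta$ equals the corresponding entry of the intersection matrix of $\Gamma$ restricted to $\{\nu_1,\ldots,\nu_t\}$, and so lies in $\{0,-1\}$; the only change on the diagonal is that the entry at $\nu'_1$ is $3$ rather than $\norm{\nu_1}$. In particular $\Delta$ is a legitimate weighted graph whose intersection form is $L'$.

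To finish, I compare $\Gamma$ and $\Delta$ as plumbing graphs. By Lemma~\ref{lem:structure}, $V(\Gamma) = \{\nu_1,\ldots,\nu_t,\mu_1,\ldots,\mu_m\}$, and the orthogonality of the $e_\bullet$'s with the $f_\bullet$'s shows that the linear chain $\mu_1,\ldots,\mu_m$ attaches to the remainder of $\Gamma$ only at $\nu_1$ through the edge $\nu_1 \cdot \mu_1 = -1$. Since Lemma~\ref{lem:mui_cont_frac} forces $\norm{\mu_0} \ge 2$ and hence $\norm{\nu_1} \ge 3$, the vertex $\nu_1$ is not the weight-$2$ central vertex of the star-shaped plumbing $\Gamma$; consequently the $\mu$-tail lies strictly inside one branch of $\Gamma$. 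Deleting this tail and reweighting $\nu_1$ to norm $3$ therefore produces a star-shaped plumbing with unchanged central vertex (still of weight $2$), which by the pairing computation above is precisely $\Delta$. Reversing this operation is the claimed ``insert the chain of weights $\norm{\nu_1}, \norm{\mu_1}, \ldots, \norm{\mu_m}$ at the marked vertex'' description. The Type~I/II dichotomy transfers from $\Gamma$ to $\Delta$ because it is encoded in the number of type~(b) neighbors of $\nu_1$ among $\{\nu_j\}_{j \geq 2}$, and this is preserved by the identification.

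The main thing to watch is the bookkeeping of the standard bases under the change $r/q \mapsto 1/2$: once the sets $A_k$ and the vectors $\nu_j$ for $j \ge 2$ are seen to transport verbatim, the proof reduces to a routine matching of pairings together with the observation that the central vertex, lying strictly below the $\mu$-tail, is left untouched by the chain replacement.
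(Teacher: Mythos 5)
Your argument is correct and is essentially the paper's own proof: both identify the standard basis of the half-integer lattice $L'$ as $-e_1+f_0+f_1,\nu_2,\dots,\nu_t$ (using that no $\nu_k$ with $k\geq 2$ is tight, so these vectors transport verbatim) and observe that these pair exactly like the vertices of the plumbing obtained from $\Gamma$ by deleting $\mu_1,\dots,\mu_m$ and reweighting $\nu_1$ to three. Your additional checks (that the $\mu$-chain meets the rest of $\Gamma$ only at $\nu_1$, that the central vertex is untouched, and that the Type~I/II property transfers) are points the paper leaves implicit, so there is nothing to correct.
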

\begin{figure}[!ht]
  \centerline{
    \begin{overpic}[width=0.95\textwidth]{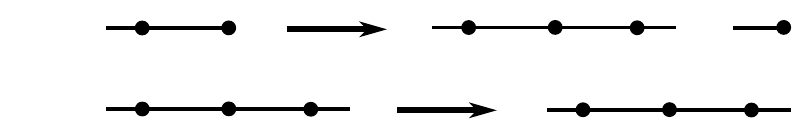}
      \put (0,12.5) {\large Type~I:}
      \put (17,10) {2}
      \put (27.5,10) {3}
      \put (57,10) {2}
      \put (66,10) {\small $\norm{\nu_1}$}
      \put (76,10) {\small $\norm{\mu_1}$}
      \put (85,13.3) {\Large $\dots$}
      \put (94,10) {\small $\norm{\mu_m}$}
      \put (0,2.5) {\large Type~II:}
      \put (17,0) {2}
      \put (27.5,0) {3}
      \put (37.5,0.2) {g}
      \put (71,0) {2}
      \put (80,0) {$q+1$}
      \put (92,0.2) {g}
    \end{overpic}
  }
\caption{Obtaining $\Gamma$ from $\Delta$ in the Type~I/II case. In both cases the marked vertices are the vertices of weight three in the plumbings on the left hand side.}
\label{fig:type_I/II_mod}
\end{figure}
\begin{proof}
  Let $\nu_1, \dots, \nu_t, \mu_1, \dots, \mu_m$ be the standard basis elements of $L$. By Lemma~\ref{lem:structure} we can assume that these are the vertices of $\Gamma$ and by the Type~I or Type~II assumption none of $\nu_2, \dots, \nu_t$ are tight. Thus the standard basis for $L'$ is
\[-e_1+f_0+f_1, \nu_2, \dots, \nu_t.\]
These standard basis elements pair exactly like the vertices of the plumbing graph $\Delta$ obtained from $\Gamma$ by deleting the vertices $\mu_1, \dots, \mu_m$ and changing the weight of $\nu_1$ to three. 
\end{proof}
The Type~III case is a little more subtle.
\begin{lemma}\label{lem:local_mod_type_III}
Let $L$ be a $p/q$-changemaker lattice, where $\frac{p}{q}=n-\frac{1}{q}$ and $q > 1$. Suppose that $L$ isomorphic to the intersection form of a plumbing $\Gamma$ with $e=2$ and the vertex set is of Type~III. Then the $(n+\frac{1}{2})$-changemaker lattice $L'$ with the same stable coefficients as $L$ is isomorphic to the intersection form of a plumbing $\Delta$, where the vertex set is of Type~II. Moreover $\Gamma$ is obtained by increasing the weight of the two vertices adjacent to the marked vertex of $\Delta$ by one and converting the marked vertex to a chain of $q-2$ vertices of weight 2. See Figure~\ref{fig:type_III_mod}.
\end{lemma}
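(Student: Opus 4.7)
The plan is to construct the plumbing $\Delta$ by exhibiting its vertex set inside $L'$, and to verify the modification relation to $\Gamma$.

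By Lemma~\ref{lem:mu_i_are_vertices}, Lemma~\ref{lem:nu1_is_vertex}, and the Type~III part of Lemma~\ref{lem:v_endtype}, we may assume the vertex set of $\Gamma$ takes the form $\{\nu_1, \mu_1, \ldots, \mu_{q-2}, v_c\} \cup \mathcal V$, where $v_c = e_k - e_{k-1} - \cdots - e_1 - \mu_0 - \cdots - \mu_{q-2}$ is the unique type~(c) vertex with $\sigma_k$ tight in $L$, and $\mathcal V$ consists of type~(a) vertices together with at most one type~(b) vertex, all orthogonal to every $f_i$.

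Since $\|w_0'\|^2 = \|w_0\|^2 + 1$ and the stable coefficients of $L$ and $L'$ agree, $L'$ has exactly one more changemaker coefficient equal to $1$ than does $L$. I would arrange the indexing so that this extra $1$ sits at position $k$, making the tight coefficient $\sigma_k$ of $L$ reappear as $\sigma_{k+1}'$ in $L'$. Translating tightness gives $\sigma_{k+1}' = \sigma_1' + \cdots + \sigma_k'$, precisely the condition needed so that $\nu_{k+1}' = -e_{k+1}' + e_k' + \cdots + e_1'$ lies in $L'$ as a gapless standard basis element. I then propose the vertex set of $\Delta$ inside $L'$ to be
\[\{\nu_1', \nu_2', \nu_{k+1}'\} \cup \mathcal V',\]
with $\nu_1' = -e_1' + f_0' + f_1'$ the marker (weight $3$), $\nu_2' = -e_2' + e_1'$ and $\nu_{k+1}'$ the two type~(b) vertices (weights $2$ and $k+1$) adjacent to the marker, and $\mathcal V'$ the image of $\mathcal V$ under the ambient embedding $e_j \mapsto e_j'$ for $j < k$ and $e_j \mapsto e_{j+1}'$ for $j \geq k$.

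Direct pairing computations give $\nu_1' \cdot \nu_2' = \nu_1' \cdot \nu_{k+1}' = -1$ and $\nu_2' \cdot \nu_{k+1}' = 0$, realizing the Type~II local configuration at the marker. The transferred vertices of $\mathcal V'$ pair trivially with $\nu_1'$ (no $f'$-component) and, among themselves and with $\nu_2', \nu_{k+1}'$, reproduce exactly the pairings that $\mathcal V$ had with itself and with $\nu_1, v_c$ in $\Gamma$. The plumbing graph of this vertex set therefore stands in the asserted local relation to $\Gamma$, and the rank identity $\operatorname{rank}(L') = \operatorname{rank}(L) - (q - 3)$ verifies it is a basis of $L'$.

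The main obstacle is handling the possible type~(b) vertex $v_b$ in $\mathcal V$: one must show that $v_b$ takes a specific gapless standard basis form in $L$ so that its image in $L'$ is adjacent to $\nu_2'$ in $\Delta$ (mirroring its adjacency with $\nu_1$ in $\Gamma$) and does not introduce an extraneous adjacency with the marker. This relies on the Type~III constraints on $\Gamma$ together with the structure of irreducible, unbreakable vertices in quasi-alternating lattices.
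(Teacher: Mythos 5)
Your overall strategy is the same as the paper's (transfer a vertex set for $\Delta$ into $L'$ and compare pairings), but the specific vectors you choose do not work, and the issue you defer as ``the main obstacle'' is in fact fatal to your embedding rather than a loose end. Because your marker $\nu_1'=-e_1'+f_0'+f_1'$ is built from the \emph{old} coordinate $e_1'$, while $\mathcal V$ is transferred by $e_j\mapsto e_j'$, any vertex of $\mathcal V$ with nonzero $e_1$-coefficient (the type~(b) vertex) pairs $\mp 1$ with the marker, and no sign change can turn a $\pm1$ pairing into $0$; so the marker acquires a third neighbour and $\Delta$ cannot have the claimed shape. Moreover your claim that the transferred vertices ``reproduce exactly'' the pairings with $\nu_1$ and $v_c$ is false even for type~(a) vertices: one computes $v'\cdot \nu_2' = v\cdot e_1 - v\cdot e_2$ rather than $v\cdot\nu_1=-v\cdot e_1$, and $v'\cdot\nu_{k+1}' = -\,v\cdot v_c$, so every vertex adjacent to the type~(c) vertex in $\Gamma$ pairs $+1$ with $\nu_{k+1}'$. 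The paper's proof avoids both problems by building the new chain out of the \emph{extra} coordinate: the marker is $-e_0+f_0+f_1$, its weight-two neighbour is $-e_1+e_0$ (not $-e_2+e_1$), and the weight-$(k{+}1)$ neighbour is $\pm(e_k-e_{k-1}-\dots-e_0)$; then $\mathcal V$ automatically pairs trivially with the marker and pairs with $-e_1+e_0$ exactly as it did with $\nu_1$, and the residual sign discrepancy at the type~(c) end is repaired by choosing signs $\varepsilon_i$ consistently over the tree $\Delta$ --- a step your write-up omits entirely.

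Two further gaps: your vector $\nu_2'=-e_2'+e_1'$ lies in $L'$ only if $\sigma_2=1$, and your reindexing (``the extra $1$ sits at position $k$'') is compatible with the changemaker ordering only if $\sigma_1=\dots=\sigma_{k-1}=1$; neither is proved, and nothing in Lemma~\ref{lem:v_endtype} gives it to you (the correct fix is again to use the new coordinate, which has coefficient $1$ by construction). Finally, the rank identity does not show that your proposed vertices span $L'$ --- a full-rank sublattice could have index greater than one, in which case $L'$ would not be isomorphic to the plumbing lattice. The paper closes this by rewriting the standard basis of $L'$ as integer combinations of the transferred vertices (using that the standard basis of $L$ is an integer combination of the vertices of $\Gamma$); some such spanning argument, or a discriminant comparison, is needed and is missing from your proposal.
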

\begin{figure}[!ht]
  \centerline{
    \begin{overpic}[width=0.8\textwidth]{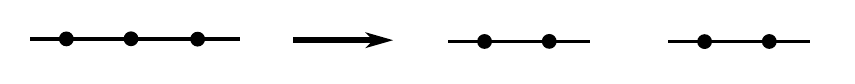}
      \put (-14,4.2) {\large Type~III:}
      \put (7,1) {2}
      \put (14.4,1) {3}
      \put (22,1.2) {$g$}
      \put (55,1) {3}
      \put (63,6) {$\overbrace{\makebox[80pt]{}}^{\text{$q-2$ vertices}}$}
      \put (63.5,1) {2}
      \put (71,4.5) {\Large $\dots$}
      \put (82,1) {2}
      \put (87,1) {$g+1$}
    \end{overpic}
  }
\caption{Obtaining $\Gamma$ from $\Delta$ in the Type~III case. The marked vertex is the vertex of weight three in the plumbing on the left hand side.}
\label{fig:type_III_mod}
\end{figure}
\begin{proof}
It will be convenient to write $L'$ as
\[
L'=\langle f_0+ e_0+\sigma_1 e_1+\dots+\sigma_t e_t, f_1-f_0  \rangle^\bot \subseteq \langle f_0,f_1,e_0, \dots, e_t\rangle=\Z^{t+3}.
\]
This differs only from the notation in Section~\ref{sec:CM_lattices} only by a shift in the indices on the $e_i$. We will show that $L'$ is isomorphic to the intersection form of the relevant plumbing. 

Let $\mu_1, \dots, \mu_m, v_1, \dots, v_t$ be the vertices of $\Gamma$, where we assume that $v_1=\nu_1$, and $v_2$ is the unique type~(c) vertex. By Lemma~\ref{lem:v_endtype},  we may assume that $v_2$ takes the form $v_2=-(\nu_k +\mu_1 + \dots + \mu_m)$, where $k>1$ and $\nu_k$ is tight. We modify these to obtain a collection of vectors $v_0', \dots, v_t'\in L'$ as follows. Take $v_0'=-e_0+f_0+f_1$, $v_1'=-e_1+e_0$, $v_2'= e_k-e_{k-1} - \dots - e_0$, and $v_k'=v_k$ for $k>2$. By construction we have that each of the $v_i'$ is in $L'$. 
\begin{claim}
The vectors $v_0', \dots, v_t'$ span $L'$.
\end{claim}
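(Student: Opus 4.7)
The plan is to split $L'$ off the vector $v_0'$ and then span the complementary $e$-sublattice using $v_1', v_2'$ together with the vertices $v_3, \ldots, v_t$. The defining relation $f_1-f_0 \in \langle L' \rangle^\perp$ forces every element of $L'$ to have equal $f_0$- and $f_1$-coefficients, and since $v_0'=-e_0+f_0+f_1$ has $f_0$-coefficient $1$, one obtains the orthogonal-complement-of-$e$'s splitting
\[
L' \;=\; \langle v_0' \rangle \oplus \bigl(L' \cap \langle e_0, e_1, \ldots, e_t \rangle\bigr),
\]
where the second summand has rank $t$ and is defined by the single relation $c_0 + \sum_{i\ge 1}\sigma_i c_i = 0$. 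This reduces the claim to showing that $v_1', v_2', v_3, \ldots, v_t$ span $L' \cap \langle e_0, e_1, \ldots, e_t\rangle$.

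The bridge between $L$ and $L'$ is the natural inclusion of $L \cap \langle e_1, \ldots, e_t\rangle$ (of rank $t-1$, defined by $\sum\sigma_i c_i=0$) into $L' \cap \langle e_0, e_1, \ldots, e_t\rangle$ as the subset on which the $e_0$-coordinate vanishes; the quotient is infinite cyclic and $v_1' = -e_1+e_0$ has $e_0$-coordinate $1$, so it generates this quotient. It therefore suffices to show that $L \cap \langle e_1, \ldots, e_t\rangle$ is contained in the integral span of $v_1', v_2', v_3, \ldots, v_t$.

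To identify $L \cap \langle e_1, \ldots, e_t\rangle$ concretely, I would use that $\{\mu_1, \ldots, \mu_{q-2}, v_1, v_2, v_3, \ldots, v_t\}$ is a basis of $L$ (the vertex basis of $\Gamma$). Writing $z\in L$ in this basis and imposing vanishing of every $f$-coefficient: the vertices $v_3,\ldots,v_t$ are of type~(a) or (b) and contribute no $f$-component, while $\mu_j$, $v_1$, and $v_2$ contribute the $f$-parts $-f_j+f_{j+1}$, $f_0+f_1$, and $-f_0-f_{q-1}$ respectively. The resulting telescoping system forces the coefficients of $\mu_1,\ldots,\mu_{q-2}, v_1, v_2$ to all be equal, and a direct computation in $\Z^{q+t}$ yields
\[
\sum_{j=1}^{q-2}\mu_j + v_1 + v_2 \;=\; e_k - e_{k-1} - \cdots - e_2 - 2e_1 \;=:\; u,
\]
so $L \cap \langle e_1, \ldots, e_t\rangle$ is spanned (hence a rank count shows it has basis) $\{u, v_3, \ldots, v_t\}$.

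The proof concludes with the one-line identity $v_1' + v_2' = u$, verified directly from the definitions in $\Z^{t+3}$. Thus $u \in \langle v_1', v_2'\rangle$, so $\{v_1', v_2', v_3, \ldots, v_t\}$ spans $L' \cap \langle e_0, e_1, \ldots, e_t\rangle$, and adjoining $v_0'$ gives all of $L'$. The main technical step is the $f$-cancellation producing $u$; this works because the telescoping chain $\sum\mu_j = -f_1+f_{q-1}$ is exactly closed off by the $f$-contributions $f_0+f_1$ of $v_1$ and $-f_0-f_{q-1}$ of $v_2$, and this telescope is insensitive to whether other standard basis elements of $L$ happen to be tight, since any such contributions would be buried in the $c_i$ coefficients of the (type~(a)/(b), hence $f$-free) vertices $v_3,\ldots,v_t$.
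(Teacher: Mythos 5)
Your proof is correct, but it takes a genuinely different route from the paper's. The paper expands each standard basis vector $\nu_j$ of $L$ in the vertex basis $\{v_i,\mu_j\}$, pushes the coefficients forward along $v_i\mapsto v_i'$ (using the coordinate correspondence $v_i\cdot e_j=v_i'\cdot e_j$ and $v_i\cdot f_0=v_i'\cdot e_0$, with the $\mu_j$ dropping out since they are $f$-supported), and identifies the resulting vectors, together with $v_0'$, as exactly the standard basis of $L'$; spanning then follows from Lemma~\ref{lem:standard_basis} applied to $L'$. You instead argue directly from the defining relations of $L'$: strip the $f$-coordinates with $v_0'$, strip the $e_0$-coordinate with $v_1'$, and reduce to showing that $L\cap\langle e_1,\dots,e_t\rangle$ lies in the span of $v_1',v_2',v_3,\dots,v_t$; the telescoping of $f$-parts in the vertex basis (valid because in the Type~III case $m=q-2$ and all $\norm{\mu_i}=2$, so the $f$-parts of $\mu_j$, $v_1$ and $v_2$ are $-f_j+f_{j+1}$, $f_0+f_1$ and $-f_0-f_{q-1}$) forces those coefficients to be equal, giving the generator $u=\sum_j\mu_j+v_1+v_2=e_k-e_{k-1}-\dots-e_2-2e_1$, and the one-line identity $v_1'+v_2'=u$ closes the argument. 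I checked the computations ($u$, the all-equal-coefficients step including the degenerate case $q=2$, and the fact that $v_3,\dots,v_t$ are $f$-free by Lemma~\ref{lem:v_endtype}); they are fine. The paper's route is shorter given the existing machinery and has the side benefit of exhibiting the standard basis of $L'$ among the $v_i'$, which is what makes the Type~II structure of $\Delta$ immediate; your route is more self-contained, avoiding both the coefficient-transfer step and the appeal to Lemma~\ref{lem:standard_basis} for $L'$, and it produces explicit direct-sum decompositions of $L'$ and $L'\cap\langle e_0,\dots,e_t\rangle$ that make the spanning transparent.
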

\begin{proof}[Proof of Claim]
Consider the standard basis $\nu_1, \dots, \nu_t$ for $L$. Since the standard basis elements for $L$ and the vertices of $\Gamma$ both form bases for $L$, there are integers $\alpha_{ik}, \beta_{jk}$ such that
\[
\nu_k=\sum_{i=1}^t \alpha_{ik} v_i + \sum_{j=1}^m \beta_{jk} \mu_j.
\]
Consider instead the vectors $\nu_1', \dots, \nu_t'$ in $L'$ defined by
\[\nu_k'=\sum_{i=1}^t \alpha_{ik} v_i'\]
By construction we have for all $j\geq 1$ that $v_j\cdot e_i=v_j'\cdot e_i$ for $i\geq 1$ and $v_j\cdot f_0 = v_j'\cdot e_0$.
Thus we see that $\nu_k'=\nu_k$ unless $\nu_k$ is tight, in which case $\nu_k' = -e_k + e_{k-1}+\dots + e_0$. In either case, we see that up to reindexing the $e_i$ to agree with the notation in Section~\ref{sec:CM_lattices}, the vectors $v_0',\nu_1', \dots, \nu_t'$ are precisely the standard basis vectors for $L'$. Since they are a linear combination of the $v_i'$, this proves that the $v_i'$ span $L'$.
\end{proof}
Let $\Delta$ be the plumbing graph obtained by replacing the linear chain in $\Gamma$ given by $v_1, \mu_1, \dots, \mu_m, v_2$ by the linear chain of vectors of norm $2, 3, \norm{v_2}-1$ respectively. By construction, the $v_i'$ almost pair as the vertices of $\Delta$: the only exception being that $v_2' \cdot v_0'=1$. However as $\Delta$ is a tree, we can choose signs $\varepsilon_i=\pm 1$ such that $\varepsilon_0=\varepsilon_1=1$, $\varepsilon_2=-1$ and $\varepsilon_0 v_0', \dots, \varepsilon_r v_t'$ pair as the vertices of $\Delta$. Thus as the $v_i'$ span $L'$ we see that the intersection form of $\Delta$ is isomorphic to $L'$. By construction the vertex set given by $\varepsilon_0 v_0', \dots, \varepsilon_t v_t'$ is of Type~II.
\end{proof}

Finally, we observe that changing the weight on a marked vertex to one results in a plumbing representing $S^3$. Figure~\ref{fig:example_blowdowns} illustrates how the plumbing from Example~\ref{eg:marked} blows down to the empty plumbing when the weight on the marked vertex is changed to one.
\begin{figure}[!ht]
  \centerline{
    \begin{overpic}[width=0.7\textwidth]{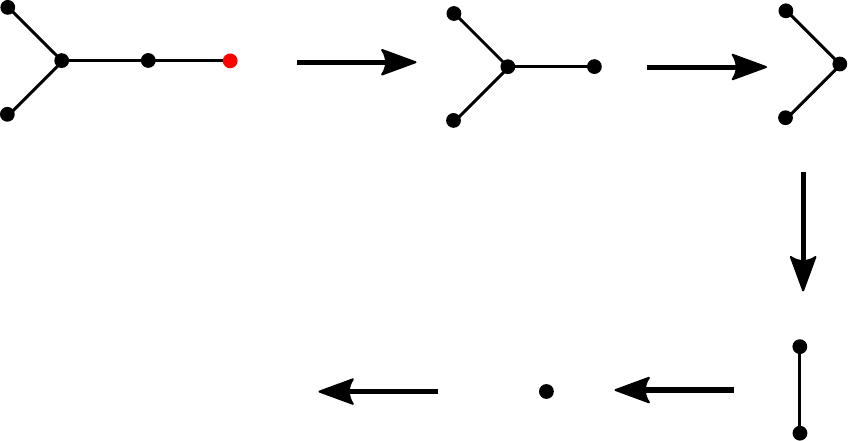}
      \put (-10,45) {\LARGE $\Delta'$}
      \put (-2,50) {$2$}
      \put (-2,38) {$3$}
      \put (7,47) {$2$}
      \put (17,47) {$2$}
      \put (27,47) {$1$}
      \put (50,50) {$2$}
      \put (50,38) {$3$}
      \put (59,47) {$2$}
      \put (69,47) {$1$}
      \put (90,50) {$2$}
      \put (90,38) {$3$}
      \put (99,47) {$1$}
      \put (90,10) {$1$}
      \put (90,0) {$2$}
      \put (60,5) {$1$}
       \put (30,5) {\LARGE $\emptyset$}
    \end{overpic}
  }
\caption{The sequence of blowdowns to the empty plumbing when the marked vertex of the plumbing $\Delta$ of Example~\ref{eg:marked} is changed to one.}
\label{fig:example_blowdowns}
\end{figure}
\begin{lemma}\label{lem:Gamma'_is_S3}
Let $\Delta$ be a star-shaped plumbing or a linear plumbing whose intersection form is isomorphic to a half-integer changemaker lattice $L$ by an isomorphism mapping vertices to gapless standard basis elements. Let $\Delta'$ be the plumbing obtained from $\Delta$ by changing the weight of the marked vertex to one. Then $\Delta'$ can be reduced to the empty plumbing by a sequence of blow-downs on weight 1 vertices.

In particular, the 4-manifold $X$ obtained by plumbing disk-bundles according to $\Delta'$ has boundary $\partial X\cong S^3$ and the corresponding surgery diagram for $S^3$ can be reduced to the empty diagram by performing a sequence of Rolfsen twists on 1-framed unknots.
\end{lemma}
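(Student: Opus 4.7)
The plan is to identify the intersection form of $\Delta'$ with the standard diagonal lattice $\Z^r$ (where $r$ is the number of vertices of $\Delta$), conclude $\partial X\cong S^3$, and then carry out the blow-downs inductively.

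By Lemma~\ref{lem:structure} in the Type~I/II cases and by the construction in the proof of Lemma~\ref{lem:local_mod_type_III} in the Type~III case, the vertices of $\Delta$ may be taken to be the standard basis elements $\nu_1,\dots,\nu_t$ of the $(n\pm\tfrac{1}{2})$-changemaker lattice $L'$, where $\nu_1=-e_1+f_0+f_1$ is the marked vertex (of norm $3$) and each $\nu_j$ with $j\geq 2$ is gapless, hence lies in $\langle e_1,\dots,e_t\rangle\cong\Z^t$. I define a map $\phi\colon \Delta'\to\Z^t$ by sending the marked vertex to $-e_1$ and each $\nu_j$ with $j\geq 2$ to itself. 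The map preserves norms (the marked vertex now has weight $\norm{-e_1}=1$, matching its modified weight in $\Delta'$) and preserves pairings: for $j\geq 2$, since $\nu_j$ has no $f_0,f_1$ component, $\nu_j\cdot\nu_1=-\nu_j\cdot e_1=\nu_j\cdot(-e_1)$. Since $\{-e_1,\nu_2,\dots,\nu_t\}$ is triangular in the basis $e_1,\dots,e_t$, its $\Z$-span is all of $\Z^t$, so $\phi$ extends to an isomorphism of intersection forms. In particular $|\det\Delta'|=1$, so $\partial X$ is an integer homology sphere.

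For the explicit reduction, the marked vertex of $\Delta'$ is a leaf in Type~I and has degree $2$ in Type~II/III; in either case the blow-down preserves the tree structure (in the degree-$2$ case, the unique path between the two neighbours passes through the blown-down vertex, so no cycle is created). Under $\phi$, this first blow-down corresponds to projecting $\Z^t$ onto $\langle e_2,\dots,e_t\rangle\cong\Z^{t-1}$, so the resulting plumbing tree has intersection form $\Z^{t-1}$. I then iterate: any positive-definite tree plumbing whose intersection form is $\Z^s$ must contain a weight-one vertex, because otherwise the Gram matrix would have diagonal entries $\geq 2$ and off-diagonal entries in $\{0,-1\}$, forcing determinant strictly greater than $1$ (by induction expanding along a leaf, which reduces to the elementary fact that a chain $[x_1,\dots,x_s]^-$ with $x_i\geq 2$ has continued fraction $>1$). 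Thus at each stage a weight-one vertex is available; blowing it down preserves the standard unimodular form and lowers the rank by one, and we reach the empty plumbing in finitely many steps. The Rolfsen twist statement is then immediate, since blowing down a weight-one vertex in the plumbing corresponds to a Rolfsen twist on the associated $+1$-framed unknot.

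The main obstacle is the lattice identification $\phi$, which simultaneously yields $\partial X\cong S^3$ and ensures that a weight-one vertex remains available after each blow-down; once this is established, the remainder of the argument is a short plumbing-calculus induction based on the continued-fraction inequality for chains.
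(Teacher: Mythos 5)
Your opening identification is fine: sending the marked vertex to $-e_1$ and keeping the gapless vertices $\nu_2,\dots,\nu_t$ does give an isomorphism of $Q_{\Delta'}$ with the standard lattice $\Z^t$, and the first blow-down corresponds to projecting onto $\langle e_2,\dots,e_t\rangle$. The genuine gap is in the inductive step that is supposed to keep the process going. You claim that a positive-definite tree plumbing with all weights $\geq 2$ has Gram determinant strictly greater than $1$, ``by induction expanding along a leaf.'' This is false: the $E_8$ tree has all weights equal to $2$, off-diagonal entries in $\{0,-1\}$, is positive definite, and has determinant exactly $1$ (its boundary is the Poincar\'e sphere, not $S^3$). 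So your justification of ``intersection form $\cong\Z^s$ implies a weight-one vertex exists'' collapses. The statement you actually need is that a tree plumbing with all weights $\geq 2$ has no vectors of norm one in its lattice (so that its form cannot be $\Z^s$); that is a nontrivial claim in the spirit of Lemma~\ref{lem:key_properties}\eqref{it:no_norm_1}, which the paper proves only for quasi-alternating lattices, and you give no argument for it. A secondary unaddressed point: even when a weight-one vertex exists at a later stage, blowing it down preserves the tree structure only if it has degree at most two; positive definiteness alone does not rule out a weight-one vertex of degree three (e.g.\ central weight $1$ with leaves of weights $2,3,7$), and you verify the degree condition only for the first blow-down.

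This is exactly where the paper's proof takes a different, structure-preserving route: after blowing down the marked vertex, the neighbour $\nu_2=-e_2+e_1$ becomes the weight-one vertex, and the paper re-embeds the resulting plumbing (with that vertex's weight reset to $3$) as the plumbing of a smaller half-integer changemaker lattice whose vertices are again gapless standard basis elements, with $-e_2+f_0+f_1$ as the new marked vertex. The induction thus carries the changemaker/standard-basis structure along, so the next weight-one vertex is exhibited explicitly at every stage and the local picture (degree at most two, tree preserved) is automatic. To repair your argument you would either need to prove the missing lattice-theoretic claim about norm-one vectors in arbitrary weights-$\geq 2$ trees, or, as the paper does, keep track of the changemaker structure rather than only the abstract isomorphism type $\Z^s$.
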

\begin{proof} We will prove this inductively on the number of vertices in $\Delta$. Suppose that $\Delta$ is a tree whose intersection form is isomorphic to a half-integer changemaker lattice for which each vertex is a gapless standard basis element of $L$. When $L$ has rank one $\Delta$ consists of just a single vertex, $\nu_1$. The lemma is clearly true in this case. 

So now suppose that $L$ has rank $t>1$ and the vertices of $\Delta$ are gapless standard basis elements $\nu_1, \dots, \nu_t$. With the exception of $\nu_1$, these basis elements are not tight since they must have pairing $\nu_1 \cdot \nu_k \in \{0, -1\}$. Thus we must have $\sigma_2=1$ and $\nu_2=-e_2+e_1$. Note that any other vertex pairing with $\nu_1$ must take the form $\nu_g=-e_g+e_{g-1}+\dots + e_1$ for some $g>2$. If it exists then this $\nu_g$ is unique. For if we had $\nu_k=-e_k+e_{k-1}+\dots + e_1$ for some $k>g$, then
\[\nu_k\cdot \nu_g=g-1>0,\]
which is impossible for distinct vertices.

Thus if we obtain $\Delta'$ by changing the weight of the marked vertex $\nu_1$ to have weight one, we may perform a blow-down on this weight one vertex in $\Delta'$. This produces a new plumbing $\wt{\Delta}'$ with one fewer vertices. Since blowing down a weight one vertex decreases the weight of its neighbours by one, $\wt{\Delta}'$ contains a vertex of weight one. Let $\wt{\Delta}$ be the plumbing obtained by changing the weight of this vertex to three. These operations are illustrated in Figure~\ref{fig:inductive_step}.

The intersection form of $\wt{\Delta}$ embeds into the diagonal lattice generated by $e_2,\dots, e_t, f_0,f_1$, by taking vertices $\nu_2' ,\dots, \nu_t'$, where $\nu_2'=-e_2+f_0+f_1$, if there is $\nu_g=-e_g+e_{g-1}+\dots + e_1$, then $\nu_g'=-e_g+e_{g-1}+\dots + e_2$ and $\nu_k'=\nu_k$ for all other $k$.
However, these $\nu_2' ,\dots, \nu_t'$ are precisely the standard basis elements for some half-integer changemaker lattice
\[
L'=\langle w_0', f_1-f_0 \rangle^\bot \subseteq \langle f_0,f_1, e_2, \dots, e_t \rangle,
\]
of rank $t-1$ where $w_0'=f_0 +\sigma_2' e_2 + \dots + \sigma_t' e_t$ is defined by choosing  the $\sigma_i'$ inductively so that $\sigma_2'=1$ and $\sigma_k'$ is chosen to ensure that $\nu_k'\cdot w_0'=0$. Moreover these standard basis elements for $L'$ are gapless by construction. Thus we have an isomorphism from the intersection form of $\wt{\Delta}$ to a half-integer changemaker lattice which maps vertices to gapless standard basis elements. Moreover, the vertex corresponding to $\nu_2'$ is the marked vertex of $\wt{\Delta}$. Thus $\wt{\Delta}'$ is obtained by changing the marked vertex in $\wt{\Delta}$. Since $\wt{\Delta}$ has $t-1$ vertices, we can assume inductively that $\wt{\Delta}'$ can be blown-down to the empty diagram. Since $\wt{\Delta}'$ is obtained from $\Delta'$ by a blow-down it follows that $\Delta'$ can also be blown down to the empty plumbing, as required.

The statement about Rolfsen twists follows since a blow-down on the plumbing graph is achieved by a Rolfsen twist in the corresponding surgery diagram.
\begin{figure}[!ht]
  \centerline{
    \begin{overpic}[width=1\textwidth]{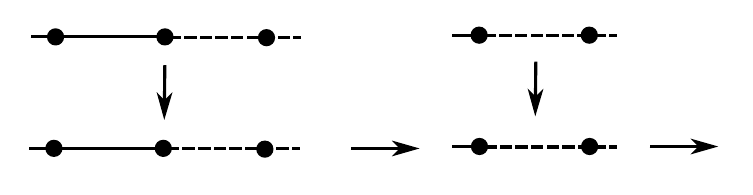}
    \put(-1,20){\Large $\Delta$:}
      \put (3,24) {\small $e_1-e_2$}
      \put (14,24) {\small $-e_1+f_0+f_1$}
      \put (30,24) {\small $-e_g+e_{g-1}+ \dots + e_1$}
      \put(-1,5){\Large $\Delta'$:}
      \put (7,2.2) {$2$}
      \put (21,2.2) {$1$}
	  \put (34.5,2.8) {$g$}      
      \put (63,2.2) {$1$}
      \put (76,2.8) {$g-1$}
      \put (55,24) {\small $-e_2+f_0+f_1$}
      \put (70,24) {\small $-e_g+e_{g-1}+ \dots + e_2$}
      \put (45,9) {\small blow-down}
      \put (85,9) {\small blow-down}
      \put (97,5) {\Large $S^3$}
    \end{overpic}
  }
 \caption{Showing inductively that $\Delta'$ blows down.}
\label{fig:inductive_step}
\end{figure} 
\end{proof}

\subsection{From lattices to surgeries}
Now we show how to pass from a changemaker lattices to knots with Seifert fibered space surgeries.
\begin{figure}[!ht]
  \centerline{
  \begin{overpic}[width=0.9\textwidth]{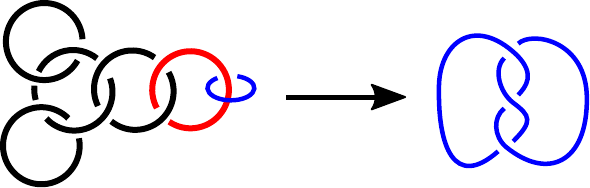}    
      \put (-2,25) {$3$}
      \put (16,7) {$2$}
      \put (-2,5) {$2$}
      \put (25,7) {$2$}
      \put (35,7) {$1$}
      \put (40,12) {$\frac{\alpha}{\beta}$}
      \put (50,18) {blowdowns}
       \put (75, 28) {$-(9-\frac{\alpha}{\beta})$}
       \put (42, 20) {$C$}
       \put (68,22) {\Large $K'$}
    \end{overpic}
 }
\caption{The construction of Lemma~\ref{lem:lattices_to_surgery} applied to the plumbing from Example~\ref{eg:marked}. After performing the necessary blowdowns the curve $C$ becomes the trefoil and the $\frac{\alpha}{\beta}$ surgery coefficient becomes $\frac{\alpha}{\beta}-9$.}
\label{fig:example_construction}
\end{figure}

\begin{lemma}\label{lem:lattices_to_surgery}
Let $\Gamma$ be a plumbing graph with $e=2$ whose intersection form is isomorphic to a $p/q$-changemaker lattice $L$, where $p/q \in \Q\setminus\Z$. If $Y$ is the corresponding Seifert fibered space, then there is a knot $K'$ which is either a torus knot or a cable of a torus knot such that $S_{-p/q}^3(K')\cong Y$ and the Alexander polynomial of $K'$ is determined by the stable coefficients of $L$.\footnote{That is to say that the torsion coefficients of $\Delta_{K'}(t)$ can be computed from $L$ by \eqref{eq:w0formula2}. As in Remark~\ref{rem:Alexander_recovery}, this allows us to calculate $\Delta_{K'}(t)$ from $L$ (cf. also Lemma~\ref{lem:CM_thm_plumbings}).}
\end{lemma}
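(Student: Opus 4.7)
The plan is to exploit the trichotomy of vertex sets (Type~I, II, III) established in Lemma~\ref{lem:v_endtype} and to pass from $L$ to an auxiliary half-integer changemaker lattice whose associated plumbing admits a Kirby-theoretic interpretation as surgery on a knot in $S^3$. Concretely, I apply Lemma~\ref{lem:local_mod_type_I/II} (for Types~I and~II) or Lemma~\ref{lem:local_mod_type_III} (for Type~III) to produce a plumbing $\Delta$ whose intersection form is the $(n\mp\tfrac12)$-changemaker lattice $L'$ having the same stable coefficients as $L$, such that $\Gamma$ arises from $\Delta$ by the explicit local replacement of the marked vertex described there.

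Next, I interpret $\Delta$ as a Kirby diagram. Lemma~\ref{lem:Gamma'_is_S3} asserts that the modified plumbing $\Delta'$ (with the marked vertex's weight changed to $1$) represents $S^3$ via Rolfsen twists/blow-downs on $1$-framed unknots. Thus $\Delta$ itself is a surgery description of a half-integer surgery on some knot $K'\subset S^3$, namely the component corresponding to the marked vertex, viewed in the $S^3$ obtained by the Rolfsen-twist reduction of the remaining framed link. The local replacement of the marked vertex by a chain with continued fraction encoding $p/q$ (cf.\ Lemma~\ref{lem:mui_cont_frac} and Remark~\ref{rem:mu_i_useful_cases}) amounts, after slam-dunks, to changing only the surgery coefficient on $K'$ from the half-integer value to $-p/q$. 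Hence $Y\cong S^3_{-p/q}(K')$.

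To identify $K'$ as a torus knot or cable of one, I track the marked-vertex component through the inductive sequence of Rolfsen twists from the proof of Lemma~\ref{lem:Gamma'_is_S3}. In the Type~I case the marked vertex is adjacent only to the type~(b) vertex $\nu_2=-e_2+e_1$, and unwinding the blow-downs against the two legs of continued fractions $p_1/q_1$ and $p_2/q_2$ yields the standard $(r,s)$ torus-knot braid presentation; comparison with Proposition~\ref{prop:torus_surg} then matches $K'$ with the appropriate $T_{\pm r,s}$. In the Type~II case, the additional type~(b) vertex $\nu_g$ adjacent to $\nu_1$ introduces one further Rolfsen twist about an unknot that links $K'$ in a non-trivial braid; this converts the torus knot into its $(a,b)$-cable with $a$ determined by the winding, via the relation $S^3_{ab\pm 1/q}(C_{a,b}\circ T_{r,s})\cong S^3_{(qab\pm 1)/qa^2}(T_{r,s})$ recalled in Section~\ref{sec:realisable_spaces}. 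In the Type~III case, the unique type~(c) vertex plays the analogous role and again produces a cable of a torus knot (this is precisely where $q=2$ specializations appear).

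Finally, for the Alexander polynomial: since $\Gamma$ is quasi-alternating (Proposition~\ref{prop:lambda_is_QA}), $Y$ is an $L$-space, so $K'$ has a positive $L$-space surgery and is an $L$-space knot. Therefore its torsion coefficients equal the $V_i$, which by equation~\eqref{eq:w0formula2} of Theorem~\ref{thm:CM} are determined by $w_0$, hence by the stable coefficients of $L$ alone; then Remark~\ref{rem:Alexander_recovery} recovers $\Delta_{K'}(t)$ from these torsion coefficients. The main obstacle will be Step~3—the explicit identification of $K'$ as a torus knot or cable in each type. This requires a careful accounting of how the marked-vertex curve sits relative to the legs of $\Delta$ during the Rolfsen reduction of Lemma~\ref{lem:Gamma'_is_S3}, and the reconciliation of the resulting Seifert invariants with the normalized form $S^2(e;\tfrac{p_1}{q_1},\tfrac{p_2}{q_2},\tfrac{p_3}{q_3})$ with $e=2$ via Proposition~\ref{prop:torus_surg}.
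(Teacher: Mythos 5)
Your opening steps coincide with the paper's own proof: you pass to the half-integer changemaker lattice $L'$ via Lemma~\ref{lem:local_mod_type_I/II} and Lemma~\ref{lem:local_mod_type_III}, construct $K'$ from the $S^3$ diagram supplied by Lemma~\ref{lem:Gamma'_is_S3}, and verify $S^3_{-p/q}(K')\cong Y$ by slam-dunks together with the continued fraction identity of Lemma~\ref{lem:mui_cont_frac}. The two remaining verifications, however, are where your argument has genuine gaps. First, the identification of $K'$ as a torus knot or a cable of one is exactly the step you defer (``the main obstacle''): as written it is a plan, not a proof, and the braid/winding-number bookkeeping you sketch is never carried out. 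The paper sidesteps this entirely: performing $-1/d$-surgery on the meridian $C$ for every $d\geq 2$ produces $\partial X_d$, a Seifert fibered space, so $K'$ admits non-integer Seifert fibered surgeries of arbitrarily large denominator, and Proposition~\ref{prop:q>9_case} (Lackenby--Meyerhoff) then forces $K'$ to be a torus knot or a cable of a torus knot; the direct Kirby-calculus identification you propose is noted in the paper only as an optional alternative.

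Second, and more seriously, your Alexander polynomial argument does not close. Applying Lemma~\ref{lem:CM_thm_plumbings} (equivalently Theorem~\ref{thm:CM}) to the surgery $S^3_{-p/q}(K')\cong Y$ shows only that $Q_\Gamma$ is isomorphic to \emph{some} $p/q$-changemaker lattice whose stable coefficients compute $\Delta_{K'}(t)$; it does not show that these stable coefficients agree with those of $L$. Changemaker structures on a fixed plumbing lattice need not be unique: the paper's example of the two distinct $133/2$-changemaker lattices (coming from $T_{5,13}$ and the $(2,33)$-cable of $T_{3,5}$) isomorphic to one plumbing lattice shows precisely this failure. This ambiguity is why the paper runs the construction over the whole family of slopes $-(N+\frac{1}{d})$ and invokes the rigidity statement Lemma~\ref{lem:large_q_rigidity} for $d$ larger than the number of vertices, where the changemaker structure is unique; this simultaneously determines the integer $N=n-1$. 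That determination is also missing from your surgery-coefficient claim: the Rolfsen twists of Lemma~\ref{lem:Gamma'_is_S3} change the framing on $C$ by an a priori unknown integer, so without pinning down $N$ (via Lemma~\ref{lem:large_q_rigidity}, or a discriminant comparison) you cannot assert that the coefficient on $K'$ is exactly $-p/q$.
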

\begin{proof} First consider the following construction.
Let $\Delta$ be a plumbing isomorphic to a $(n-\frac{1}{2})$-changemaker lattice $L'$ with the same stable coefficients as $L$ and with vertices of Type~I or Type~II. Note here that $n$ is the integer $n=\lceil p/q \rceil$. By Lemma~\ref{lem:structure}, we can assume that the vertices of $\Delta$ in $L'$ are gapless standard basis vectors and $\Delta$ has a marked vertex as defined at the start of Section~\ref{sec:marked}. Let $\Delta'$ be the plumbing obtained by changing the weight of the marked vertex in $\Delta$ to one and let $D$ be the surgery diagram corresponding to $\Delta'$. By Lemma~\ref{lem:Gamma'_is_S3}, $D$ is a surgery diagram for $S^3$.  Thus if we let $C$ be the meridian of the unique 1-framed unknot in $D$, then $C$ describes a knot $K'\subseteq S^3$. Note that even though $C$ is unknotted in the diagram $D$, the knot $K'$ will be non-trivial in general (see, for example, Figure~\ref{fig:example_construction}).

\begin{figure}[!ht]
  \centerline{
    \begin{overpic}[width=1\textwidth]{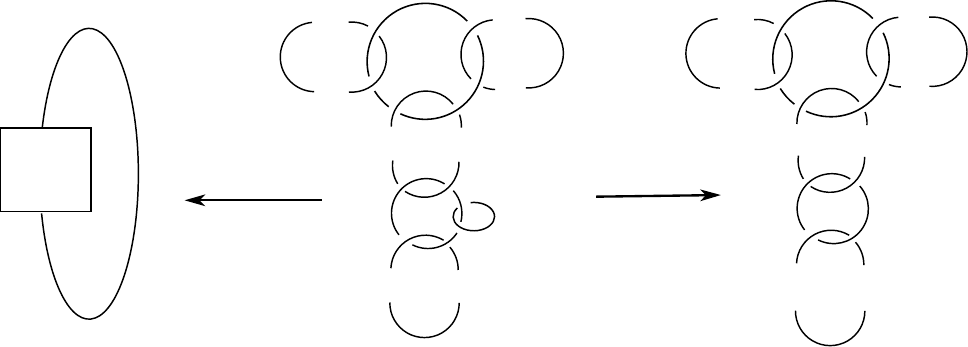}
      \put (3,17) {\Large $K'$}
      \put (14,29) {$-(N-\frac{\alpha}{\beta})$}
      \put (38,13) {$1$}
      \put (52,13) {$\frac{\alpha}{\beta}$}
      \put (91,13) {$1- \frac{\beta}{\alpha}$}
      \put (19,12) {Rolfsen twists}
      \put (63,12.5) {slam dunk}
      \put (44,5) {$\vdots$}
      \put (85.5,5) {$\vdots$}
      \put (44,20) {$\vdots$}
      \put (85.5,20) {$\vdots$}
      \put (33,30) {$\dots$}
      \put (52,30) {$\dots$}
      \put (75,30) {$\dots$}
      \put (94,30) {$\dots$}
    \end{overpic}
  }
\caption{The knot $K'$.}
\label{fig:knot_construction}
\end{figure}

Let $Y'$ be the 3-manifold obtained by performing $\frac{\alpha}{\beta}$-surgery on $C$ for some $\frac{\alpha}{\beta}\in \Q$. By Lemma~\ref{lem:Gamma'_is_S3}, we may perform a sequence of Rolfsen twists on 1-framed unknots to obtain a surgery description of $Y'$ involving only the component given by $C$ (i.e we obtain the surgery description for $Y$ in terms of the knot $K'$). Since each such Rolfsen twist decreases the framing on $C$ by a non-negative integer, we see that $Y'\cong S_{-(N-\frac{\alpha}{\beta})}^3(K')$ for some integer $N>0$ which is independent of $\frac{\alpha}{\beta}$.

Now consider the special case where $\frac{\alpha}{\beta}=-1/d$ for $d\geq 2$. In this case we may perform a slam dunk on the component $C$, to obtain a framing of $1+d$ on the component with which $C$ is linked. Observe that this is the surgery diagram corresponding to the plumbing graph $\Delta_d$ obtained by changing the weight of the marked vertex of $\Delta$ to have weight $d+1$. If $X_d$ is the plumbed 4-manifold corresponding to $\Delta_d$, then we have that
\[S_{-(N+\frac{1}{d})}^3(K')\cong \partial X_{d}.\]
It follows from Lemma~\ref{lem:CM_thm_plumbings} that the intersection form of $\Delta_d$ is isomorphic to a $(N+\frac{1}{d})$-changemaker lattice whose stable coefficients compute the Alexander polynomial $\Delta_{K'}(t)$. However, the intersection form of $\Delta_d$ is isomorphic to the $(n-1+\frac{1}{d})$-changemaker lattice with the same stable coefficients as $L'$. This isomorphism can be seen by observing that the standard basis elements of this $(n-1+\frac{1}{d})$-changemaker lattice form a set of vertices for the plumbing $\Delta_d$  (cf. Lemma~\ref{lem:local_mod_type_I/II}). Since $d$ can be taken to be arbitrarily large, it follows from Lemma~\ref{lem:large_q_rigidity} that $N=n-1$ and the Alexander polynomial of $K'$ is computed from the stable coefficients of $L'$. Moreover as all these surgeries are Seifert fibered spaces, Proposition~\ref{prop:q>9_case} implies that $K'$ is either a torus knot or a cable of a torus. With this construction in hand we prove the lemma.

\textbf{Case: Type~I or Type~II.} Suppose that $L$ is of Type~I or Type~II. Write $\frac{p}{q}=n-\frac{r}{q}$, where $1\leq r<q$. The standard basis elements $\nu_1,\mu_1, \dots, \mu_m$ of $L$ form a chain of vertices in $\Gamma$. Take $L'$ to be the $(n-\frac{1}{2})$-changemaker lattice with the same stable coefficients as $L$. By Lemma~\ref{lem:local_mod_type_I/II}, $L'$ is isomorphic to the intersection form of the plumbing $\Delta$ obtained by deleting $\mu_1,\dots, \mu_m$ and changing the weight on $\nu_1$ to be three. Let $K'$ be the knot constructed from $L'$ as in the first part of this proof. We have shown that the Alexander polynomial of $K'$ is determined by the stable coefficients of $L$ and that $K'$ is either a torus knot or a cable of a torus knot. It remains to check that $S_{-p/q}^3(K')\cong Y$.
 We obtain a surgery diagram for $S_{-p/q}^3(K')$ by taking the diagram $D$ and performing $(\frac{r}{q}-1)$-surgery on the meridian of $C$. Performing a slam-dunk allows us to absorb $C$ in to the 1-framed component and replace the framing on this component by
\[1+\frac{q}{q-r}= 1-\frac{1}{\frac{r}{q}-1}.\]
In the Type~II case, we have $\frac{r}{q}=\frac{q-1}{q}$. Thus after performing this slam-dunk we obtain a $(1+q)$-framed component, giving us the surgery diagram corresponding to the plumbing $\Gamma$ (see Figure~\ref{fig:type_II_slamdunks}). This shows that $S_{-p/q}^3(K')$ is the required Seifert fibered space in the Type~II case.

In the Type~I case, we perform a sequence of reverse slam-dunks to obtain an integer surgery diagram. Using Lemma~\ref{lem:mui_cont_frac} and $\norm{\nu_1}=1+\norm{\mu_0}$ we see that 
\[
1+\frac{q}{q-r}=[\norm{\nu_1},\norm{\mu_1},\dots, \norm{\mu_m}]^-.
\]
Thus if we perform a sequence of reverse slam-dunks to convert this to a surgery diagram with integer coefficients then this gives a chain of unknots with surgery coefficients $\norm{\nu_1},\norm{\mu_1},\dots, \norm{\mu_m}$. This is illustrated in Figure~\ref{fig:type_I_slamdunks}. However this surgery diagram is precisely the surgery diagram for $Y$ corresponding to $\Gamma$, so we have shown that $S_{-p/q}^3(K')$ is the required Seifert fibered space.

\begin{figure}[!ht]
  \centerline{
    \begin{overpic}[width=1\textwidth]{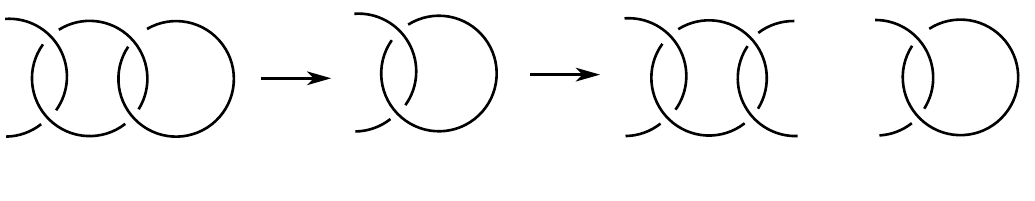}
      \put (67,3.7) {\small $\norm{\nu_1}$}
      \put (75,3.7) {\small $\norm{\mu_1}$}
      \put (17,3.5) {$\frac{r}{q}-1$}
      \put (42,4.7) {$1+\frac{q}{q-r}$}
      \put (9,3.5) {$1$}
      \put (95,4.5) {\small $\norm{\mu_m}$}
       \put (79,11.5) {\Large $\dots$}
    \end{overpic}
  }
\caption{Surgery calculus in the Type~I case.}
\label{fig:type_I_slamdunks}
\end{figure}
\begin{figure}[!ht]
\vspace{10pt}
  \centerline{
    \begin{overpic}[width=0.5\textwidth]{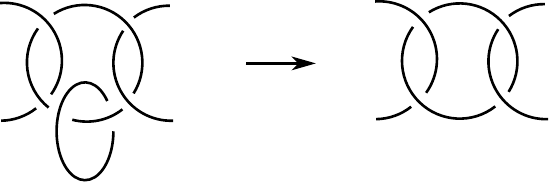}
      \put (69,4) {\small $\norm{\nu_1}=1+q$}
      \put (17,-2) {$-\frac{1}{q}$}
      \put (14,34) {$1$}
    \end{overpic}
  }
\caption{Surgery calculus in the Type~II case, where $r=q-1$.}
\label{fig:type_II_slamdunks}
\end{figure}

\textbf{Case: Type~III.} When the vertices of $\Gamma$ are of Type~III and $\frac{p}{q}=n-\frac{1}{q}$, take $L'$ be the $(n+\frac{1}{2})$-changemaker lattice with the same stable coefficients as $L$. By Lemma~\ref{lem:local_mod_type_III} this is isomorphic to the intersection form of a plumbing $\Delta$ with Type~II vertices.

Let $K'$ be the knot constructed from $L'$ as in the first part of the proof. Such a knot is either a torus knot or a cable of a torus knot and has the required Alexander polynomial. Thus it remains only to check that it has the desired surgery. We obtain a surgery diagram for $S_{-p/q}^3(K')$ by performing $\frac{1}{q}$-surgery on the curve $C$. By performing a slam dunk, this can be absorbed to a give a $(1-q)$-framed unknot. This results in a chain of unknotted components with framings $2$, $1-q$ and $g$, respectively for some $g$. By performing a sequence of $q-2$ blow-ups introducing 1-framed components, we can increase the $1-q$ framing to $-1$. When can we blow this $-1$- components down to obtain a chain of unknots with every framing at least two. The result of these operations is to replace the chain with weights $2,1-q,g$, by a chain with weights
\[3, \underbrace{2,\dots, 2}_{q-2}, g+1.\]
This is shown in Figure~\ref{fig:type_III_surgery_calc}. However this diagram is precisely the surgery diagram for $Y$ corresponding to $\Gamma$. Thus we have shown that $S_{-p/q}^3(K')$ is the required Seifert fibered space.   

\begin{figure}[!ht]
  \centerline{
    \begin{overpic}[width=1\textwidth]{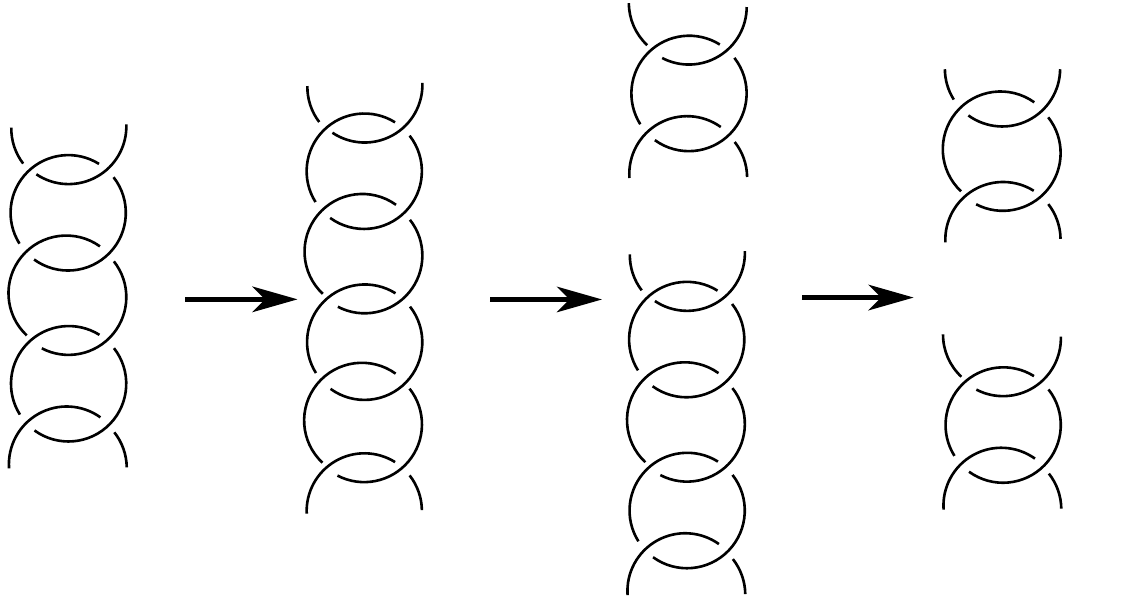}
      \put (12,35) {2}
      \put (11.5,29) {$1-q$}
      \put (12,20) {$g$}
      \put (38,38) {3}
      \put (38,30) {1}
      \put (38,22) {$2-q$}
      \put (38,16) {$g$}      
      \put (66,44) {3}
      \put (66,38) {2}
      \put (59.5,33) {\Large$\vdots$}
      \put (66,29) {2}
      \put (66,23) {1}
      \put (65,16.5) { $-1$}
      \put (65,9.5) { $g$}
      \put (94,37) {3}
      \put (94,32) {2}
      \put (87.5,26) {\Large$\vdots$}
      \put (94,22) {2}
      \put (94,16) {$g+1$}
      \put (95, 27) {$\left. \rule{0cm}{33pt} \right\rbrace$}
      \put (97.2, 27.9) {$q-2$}
      \put (97.2,25.3) {unknots}
    \end{overpic}
  }
\caption{Surgery calculus in the Type~III case.}
\label{fig:type_III_surgery_calc}
\end{figure}
\end{proof}

\begin{remark} Some observations on the preceding lemma are in order.
\begin{enumerate}[(i)]
\item Although we used Proposition~\ref{prop:q>9_case} to deduce that the knot $K'$ is a torus knot or a cable of a torus knot, one can also deduce this fact directly by studying how the curve $C$ sits inside the surgery diagram for $S^3$.
\item One can check that the knot $K'$ constructed in the previous lemma is a torus knot in the Type~I case and a cable of a torus knot in the Type~II and Type~III cases.
\end{enumerate}
\end{remark}
\section{Analysis for the $e\geq 3$ case}\label{sec:alt_case}
In this section, we develope the methods to prove Theorem~\ref{thm:nonint} for $e\geq 3$. In this case the surgered Seifert fibered space is the branched double cover of an alternating Montesinos link. This allows us to apply results of \cite{mccoy2014noninteger} and \cite{mccoy2016thesis} which characterize when the double branched cover of an alternating link can arise by non-integer surgery. Before we state these results we will set out some conventions.

A {\em tangle} $T=(B^3,A)$ will always be a properly embedded 1-manifold $A$ in $B^3$ where $\partial B^3 \cap A$ consists of four points. Thus the branched double cover of a tangle $T$ will always be a 3-manifold with torus boundary. When considering isotopies between tangles, we will allow isotopies that move $\partial B^3$. In particular, we will allow isotopies that exchange boundary points of $A$. If two tangles $T$ and $T'$ are isotopic, then their double branched covers are homeomorphic. For the purposes of this paper, one may take a {\em rational tangle} to simply mean a tangle whose double branched cover is a solid torus. The notion of slope for rational tangles will not be used.

A {\em Conway sphere} for a knot $K$ is an embedded sphere in $S^3$ intersecting the knot transversely in four points. A Conway sphere is said to be {\em visible} in a diagram if it intersects the plane of the diagram in a connected simple closed curve and intersects the diagram transversely in four points. Note that a Conway sphere always separates a diagram into two tangles.  

The following is an amalgamation of Theorem~7.1 and Theorem~7.12 of \cite{mccoy2016thesis}.

\begin{thm}\label{thm:alt_surgery}
Let $L$ be an alternating knot or link such that $S_{p/q}^3(K)\cong \Sigma(L)$ for some knot $K\subseteq S^3$ and $p/q \in \Q\setminus \Z$. Then $L$ has a reduced alternating diagram $D$ with a visible Conway sphere $C$ which separates $D$ into two tangles such that:
\begin{enumerate}[(i)]
\item one tangle is a rational tangle containing at least one crossing which can be replaced with a single crossing to obtain an almost-alternating diagram of the unknot and
\item the other tangle is such that its branched double cover is homeomorphic to the complement of a knot $K'\subseteq S^3$ with $\Delta_K(t)= \Delta_{K'}(t)$ and $S_{p/q}^3(K')\cong S_{p/q}^3(K)\cong \Sigma(L)$.
\end{enumerate}
\end{thm}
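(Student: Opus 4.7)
The plan is to combine the changemaker surgery obstruction with the graph--lattice structure of the Goeritz form of an alternating diagram. Since $L$ is alternating, $\Sigma(L)$ is an $L$--space and bounds a canonical negative definite $4$--manifold $W$ constructed as the double cover of $B^4$ branched over the black surface of a reduced alternating diagram $D$ of $L$. By a theorem of Ozsv\'ath--Szab\'o (with the appropriate orientation) $W$ is sharp, and its intersection form is identified with the negative of the Goeritz form of $D$. After mirroring $L$ if necessary so that the surgery slope becomes positive, Theorem~\ref{thm:CM}, in the non-integer version of Gibbons, yields an isometry $-Q_W \cong L_{cm}\oplus\Z^S$, where $L_{cm}$ is a $p/q$-changemaker lattice whose stable coefficients compute the torsion coefficients, and hence the Alexander polynomial, of $K$.

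Next, I would exploit the fact that the Goeritz form of a reduced alternating diagram is a graph lattice: its natural basis is indexed by the black regions of $D$, with norms and pairings read off the black graph. Using the standard basis vectors $\nu_i,\mu_j$ of $L_{cm}$ from Section~\ref{sec:standard_bases}, together with relabellings as in Remark~\ref{rem:relabel}, one argues that the $\mu_j$ and $\nu_1$ must coincide (up to sign) with actual graph vertices, exactly as in Lemma~\ref{lem:mu_i_are_vertices} and Lemma~\ref{lem:nu1_is_vertex}. A local analysis of how the remaining standard basis elements interact with these distinguished vertices then isolates a subgraph of the black graph sitting inside a topological disk of the planar embedding of $D$. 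The boundary of this disk meets $D$ in four points, yielding the desired visible Conway sphere $C$ and a decomposition of $L$ into two tangles.

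To finish, I would read off the properties of the two tangles from the lattice decomposition. The tangle inside the distinguished disk corresponds to the sublattice spanned by the marked vertex together with the $\mu_j$; by exactly the argument of Lemma~\ref{lem:Gamma'_is_S3}, replacing the weight on the marked vertex by one produces a surgery description that blows down to the empty diagram, so this tangle is rational and the corresponding crossing change yields an almost-alternating unknot diagram. On the other side, the remaining sublattice is the intersection form of a sharp $4$--manifold whose boundary is $\Sigma(L)$, and recovering the knot $K'$ from this data as in Lemma~\ref{lem:lattices_to_surgery} produces a knot with $S^3_{p/q}(K') \cong \Sigma(L)$, while $\Delta_{K'}=\Delta_K$ holds because both polynomials are determined by the same stable coefficients of $L_{cm}$.

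The hard part will be step two: upgrading the abstract lattice isometry $Q_W\cong -(L_{cm}\oplus\Z^S)$ to a Conway sphere actually visible in $D$. The lattice does not see the planar embedding of the black graph, so extracting a simple closed curve cutting off a disk meeting $D$ in four points requires an inductive argument that carries the planar structure along -- presumably by induction on the number of black regions, tracking which changemaker moves correspond to planar tangle operations on $D$. This is precisely where the technical weight of \cite{mccoy2016thesis} lies, and it is the step that does not transfer verbatim from the non-graph analysis of Section~\ref{sec:non_int_surgeries}, since there only the algebraic identification of vertices was needed rather than planarity.
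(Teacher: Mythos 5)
You should first note that this statement is not proved in the paper at all: it is imported wholesale as an amalgam of Theorems~7.1 and~7.12 of \cite{mccoy2016thesis} (see also \cite{mccoy2014noninteger}), and your outline does follow the broad strategy of that source (sharpness of the chessboard filling, Gibbons' non-integer changemaker obstruction, graph-lattice analysis of the Goeritz form). The problem is that the step you defer --- upgrading the isometry $Q_W\cong -(L_{cm}\oplus\Z^S)$ to a visible Conway sphere --- is not a technical afterthought, it \emph{is} the theorem, and your proposal does not supply it. Moreover, the way you propose to carry it out does not work as stated: you cannot invoke Lemma~\ref{lem:mu_i_are_vertices}, Lemma~\ref{lem:nu1_is_vertex} or Lemma~\ref{lem:structure} of this paper, because those rest on Lemma~\ref{lem:key_inequality} and Lemma~\ref{lem:key_properties}, which are proved using the special star-shaped, $e=2$, quasi-alternating structure, and on the trichotomy of Lemma~\ref{lem:v_endtype}; none of this applies to the Goeritz lattice of an arbitrary reduced alternating diagram. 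In the graph-lattice setting the irreducibility/unbreakability of vertices and the identification of standard basis vectors with vertices require the separate graph-lattice machinery of Greene and of \cite{mccoy2014noninteger}, and the combinatorics (general planar graphs, cut vertices, several chains) is genuinely different. On the other hand, the ``planarity bookkeeping'' you worry about is not where the difficulty lies: the Goeritz lattice comes equipped with a distinguished vertex basis given by the white regions of the chosen diagram $D$, so once one knows the changemaker standard basis can be taken to consist of vertices, the chain $\nu_1,\mu_1,\dots,\mu_m$ is literally a chain of regions, i.e.\ a twist region of $D$, and the Conway sphere enclosing it is visible by construction.

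There is a second concrete gap in item (ii). You propose to recover $K'$ ``as in Lemma~\ref{lem:lattices_to_surgery}'', but that lemma is again specific to star-shaped plumbings with $e=2$. In the alternating setting the knot $K'$ comes from the Montesinos trick: one must first show that the marked crossing is such that changing it produces an almost-alternating diagram of the unknot (this is the lattice-theoretic payoff, the analogue of Lemma~\ref{lem:Gamma'_is_S3}); then the double branched cover of the complementary tangle is a knot complement in $S^3=\Sigma(\mathrm{unknot})$, and re-inserting rational tangles in place of the crossing exhibits $\Sigma(L)$ as $p/q$-surgery on $K'$. The abstract lattice isometry alone does not tell you that the double cover of the complementary tangle is a knot complement in $S^3$, so without the unknotting statement property (ii) is unproved. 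Once that is in place, your closing remark about $\Delta_{K'}=\Delta_K$ is fine: $\Sigma(L)$ is an $L$-space, both $K$ and $K'$ are $L$-space knots, and their torsion coefficients are determined by the same changemaker vector via \eqref{eq:w0formula2}, as in Remark~\ref{rem:Alexander_recovery}.
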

Recall that an almost-alternating diagram is one that can be transformed into an alternating diagram by changing a single crossing.
Although Theorem~\ref{thm:alt_surgery} only guarantees the existence of a single diagram for $L$ with a nice Conway sphere, we can easily obtain a similar condition on any alternating diagram of $L$. This uses the fact that any two reduced alternating diagrams of the same alternating link are related by flypes and planar isotopy \cite{Menasco93classification}. See Figure~\ref{fig:flype} for an example of a flype.

\begin{prop}\label{prop:alt_surgery_refined}
Let $L$ be an alternating knot or link such that $S_{p/q}^3(K)\cong \Sigma(L)$ for some knot $K\subseteq S^3$ and $p/q \in \Q\setminus \Z$. Then for any reduced alternating diagram $D$ of $L$ there a visible Conway sphere $C$ separating $D$ into two tangles such that:
\begin{enumerate}[(i)]
\item one tangle is a single crossing
\item the other tangle is such that its double branched cover is homeomorphic to the complement of a knot $K'\subseteq S^3$ with $\Delta_K(t)= \Delta_{K'}(t)$ and $S_{p/q}^3(K')\cong S_{p/q}^3(K)\cong \Sigma(L)$.
\end{enumerate}
\end{prop}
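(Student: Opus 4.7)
The strategy has two steps: (i) upgrade the Conway sphere of Theorem~\ref{thm:alt_surgery} inside a specific diagram $D_0$ so that the rational-tangle side is reduced to a single crossing; and (ii) propagate this stronger Conway sphere to an arbitrary reduced alternating diagram using the Menasco--Thistlethwaite flyping theorem \cite{Menasco93classification}.

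For step (i), apply Theorem~\ref{thm:alt_surgery} to obtain a reduced alternating diagram $D_0$ of $L$ with a visible Conway sphere $C_0$ separating $D_0$ into a rational tangle $R_0$ and a tangle $T_0$ whose branched double cover is $S^3 \setminus \nu(K')$ for some knot $K' \subseteq S^3$ with $\Delta_{K'}(t) = \Delta_K(t)$ and $S^3_{p/q}(K') \cong \Sigma(L)$. Choose any crossing $c$ of $R_0$ and let $C$ be a small circle in the diagram plane encircling $c$; this is a visible Conway sphere which splits $D_0$ into a single-crossing tangle and an exterior tangle $T_0 \cup (R_0 \setminus c)$. In the branched double cover, $C_0$ and $C$ lift to nested tori: $\Sigma(c)$ is a solid torus lying inside the solid torus $\Sigma(R_0)$, and the cobordism $\Sigma(R_0 \setminus c)$ between them is homeomorphic to $T^2 \times I$, as can be checked directly from the standard twist form of a rational tangle (a single crossing in an annular subtangle induces a Dehn twist on the boundary but contributes no interior topology). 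Hence the branched double cover of the new exterior tangle is
\[\Sigma(T_0) \cup_{T^2} (T^2 \times I) \;\cong\; \Sigma(T_0) \;\cong\; S^3 \setminus \nu(K'),\]
and since the cores of $\Sigma(c) \subset \Sigma(R_0)$ are isotopic through the product cobordism, filling via the single crossing realises the same $p/q$-surgery on $K'$. Thus $D_0$ equipped with $C$ already satisfies the conclusion of the proposition.

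For step (ii), let $D$ be any reduced alternating diagram of $L$. By Menasco--Thistlethwaite, $D$ is related to $D_0$ by a finite sequence of flypes and planar isotopies. A flype is a $180^\circ$ rotation of a sub-tangle, acting as a planar homeomorphism that carries crossings to crossings. Starting from a diagram with a visible single-crossing Conway sphere around a crossing $c$, a flype produces a new diagram in which $c$ has been carried to some new crossing $c'$, and a small disk about $c'$ provides the new visible Conway sphere. The exterior tangle is transformed by the same planar homeomorphism, so its branched double cover, the underlying knot complement structure, the surgery slope, and the Alexander polynomial are all preserved. Iterating through the finite sequence of flypes transports the Conway sphere constructed in step (i) from $D_0$ to $D$.

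The main technical obstacle is the identification $\Sigma(R_0 \setminus c) \cong T^2 \times I$ together with the isotopy of cores of the nested solid tori in step (i); this is a local lemma about branched double covers of single-crossing annular subtangles of a rational tangle. Once it is established, the flyping step (ii) is essentially formal because flypes act by planar homeomorphisms which preserve all relevant tangle invariants.
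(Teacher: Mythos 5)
There is a genuine gap in step (i). Your argument rests on the claim that for \emph{any} crossing $c$ of the rational tangle $R_0$, the double branched cover of the complementary piece $(B^3\setminus B_c,\,R_0\setminus c)$ is $T^2\times I$ (equivalently, that the lift of the crossing ball is a neighbourhood of a core of the solid torus $\Sigma(R_0)$). You flag this as a "local lemma" to be checked from the standard twist form, but it is in fact false at this level of generality. Rational tangles routinely contain crossings that are self-crossings of a single strand of the tangle (for example, in the $5/2$-tangle $[2,2]$ the outer twist region twists two ends of the \emph{same} strand). For such a crossing $c$, the exterior tangle contains an arc with both endpoints on the inner sphere $\partial B_c$ and an arc with both endpoints on the outer sphere, so it cannot be pair-homeomorphic to the product tangle $(S^2,4\text{ pts})\times I$; more concretely, one of the two smoothings of $c$ splits off a closed circle component, so one solid-torus filling of $\partial\Sigma(B_c)$ yields the double branched cover of a ball branched over two arcs and a circle, which is never a solid torus --- whereas every such filling of $T^2\times I$ glued to $\Sigma(B_c)$'s slot would be a solid torus. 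Hence $\Sigma(R_0\setminus c)\not\cong T^2\times I$ for such $c$, and "choose any crossing $c$ of $R_0$" does not work. The paper avoids this by shrinking the Conway sphere one crossing at a time, always past a crossing in which two arcs emerging from the boundary sphere meet (such a crossing exists in any alternating diagram of a rational tangle, by the Kauffman--Lambropoulou result cited there), and identifying the old and new exterior tangles by a tangle isotopy that is allowed to move and permute the boundary points; this singles out particular crossings rather than an arbitrary one.

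Your step (ii) also glosses over the one delicate case. A flype is not a planar homeomorphism carrying the marked crossing to the new crossing: it destroys the crossing at the flype site and creates a new one on the other side of the flyped tangle $F$, so when the marked crossing is exactly the one being flyped, the exterior of the new single-crossing Conway sphere is obtained from the old exterior by replacing $F$ with its $180^\circ$ rotation, i.e.\ the two exteriors are \emph{mutants}, not images of one another under a homeomorphism respecting the boundary parametrisation. The conclusion survives because mutation preserves the homeomorphism type of the double branched cover, which is precisely the point the paper makes; your write-up needs this ingredient, and as stated ("the exterior tangle is transformed by the same planar homeomorphism") the justification is incorrect even though the statement being proved is true. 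Crossings of the marked type lying inside $F$ or $B$ are indeed handled as you say.
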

\begin{proof}
First we will show that there is some reduced alternating diagram for $L$ with the required property. To do this take the diagram $D$ of $L$ along with the Conway sphere $C$ guaranteed by Theorem~\ref{thm:alt_surgery}. The rational tangle side of $C$ contains at least one crossing. We will show that if $C$ contains more than one crossing, then it can be `shrunk' until it contains a single crossing. It follows from the results
of \cite[Section~4]{Kauffman2004rational} that in any alternating diagram of a rational tangle at least one pair of
arcs emerging from the boundary sphere must meet in a crossing. 

\begin{figure}[!ht]
  \centerline{
    \begin{overpic}[width=0.7\textwidth]{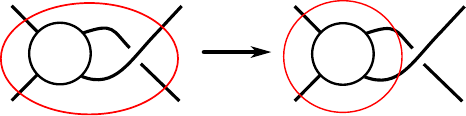}
      \put (-5,11) {\Large $C$}
      \put (85,22) {\Large $C'$}
    \end{overpic}
  }
 \caption{Shrinking $C$ to obtain $C'$.}
\label{fig:shrinking_C}
\end{figure}

Thus we can assume that $C$ appears as in Figure~\ref{fig:shrinking_C}. Take $C'$ to be the Conway sphere obtained by shrinking $C$ to omit this crossing. Notice that the tangles on the outside of $C$ and $C'$ are isotopic by an isotopy swapping the two right-most end points to eliminate a crossing. Thus we see that the branched cover of the exterior of $C'$ is still the knot complement $S^3\setminus \nu K'$. Continuing this way we can reduce $C$ until it contains a single crossing, thus giving a Conway sphere in $D$ with the required properties.  

\begin{figure}[!ht]
  \centerline{
    \begin{overpic}[width=0.5\textwidth]{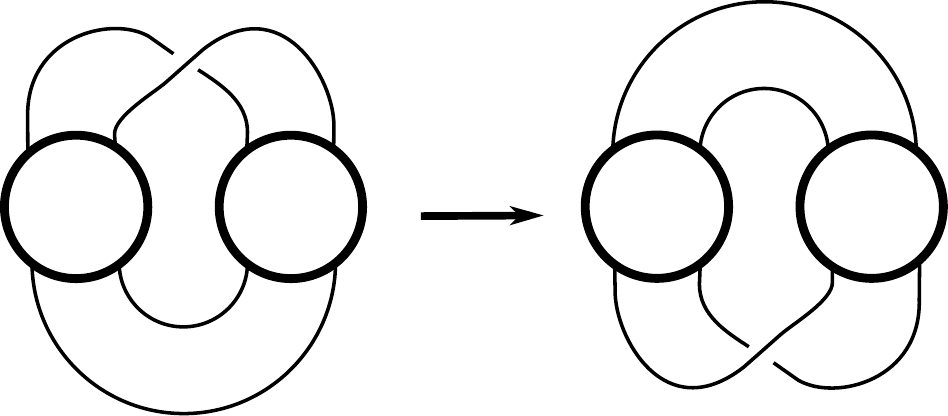}
      \put (6,20) {\Large $F$}
      \put (29,20) {\Large $B$}
      \put (67,20) {\Large \reflectbox{$F$}}
      \put (90,20) {\Large $B$}
    \end{overpic}
  }
 \caption{A flype.}
\label{fig:flype}
\end{figure}

\begin{figure}[!ht]
  \centerline{
    \begin{overpic}[width=0.5\textwidth]{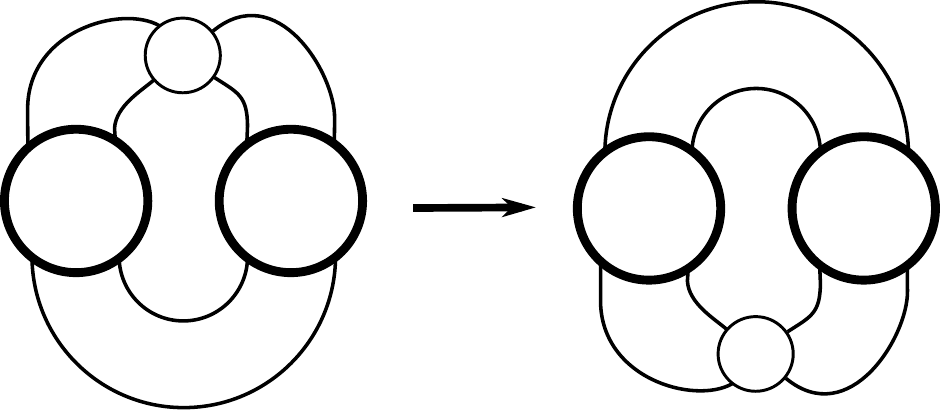}
      \put (6,20) {\Large $F$}
      \put (29,20) {\Large $B$}
      \put (67,20) {\Large \reflectbox{$F$}}
      \put (90,20) {\Large $B$}
    \end{overpic}
  }
 \caption{The choice of $C$ ad $C'$ after flyping.}
\label{fig:flyped_crossing}
\end{figure}

Thus suppose that we have a diagram $D$ with a Conway sphere $C$ with the desired properties. Now let $D'$ be any other reduced alternating diagram for $L$. This can be obtained from $D$ by a sequence of planar isotopies and flypes. It is clear that planar isotopies preserve the required property so we only need to check that the existence of $C$ is preserved under flypes. Consider a flype as depicted in Figure~\ref{fig:flype}. When $C$ is contained in one of the tangles marked $F$ or $B$, then it is clear that the image of $C$ under the flype will again be a Conway sphere with the required properties. Thus we need only consider the case that $C$ encloses the crossing destroyed by the flype. In this case we take $C'$ to be a Conway sphere in $D'$ containing only the crossing created by the flype. See Figure~\ref{fig:flyped_crossing}. However consider the tangles on the outside of $C$ and $C'$. It is not hard to see that these tangles are related by a sequence of isotopies and mutations. Since isotopies and mutations do preserve the homeomorphism type of the double branched cover $C'$ has the required properties. 
\end{proof}



Combining Proposition~\ref{prop:alt_surgery_refined} with Proposition~\ref{prop:q>9_case} allows us to prove Theorem~\ref{thm:nonint} for $e\geq 3$.

\begin{lemma}\label{lem:e>=3_case}
Let $Y=S^2(e; \frac{p_1}{q_1}, \frac{p_2}{q_2},\frac{p_3}{q_3})$ be a Seifert fibered space with $e\geq 3$ such that $S_{p/q}^3(K)\cong Y$ for some $K\subseteq S^3$ and $p/q \in \Q\setminus\Z$. Then there is a knot $K'\subseteq S^3$ which is either a torus knot or a cable of a torus knot with $S_{p/q}^3(K') \cong Y$ and $\Delta_K(t)=\Delta_{K'}(t)$.
\end{lemma}
\begin{proof}
It follows from the classification of Montesinos knots in terms of their double branched covers (see, for example, \cite[Chapter~12]{BurdeZieschang}) that such a $Y$ is the double branched cover of an alternating Montesinos link $L$ with 3 arms. Such a link has a diagram of the form $D$ shown in Figure~\ref{fig:Monty_diag}, where the rectangular boxes are twist regions each containing some number of crossings.

By Proposition~\ref{prop:alt_surgery_refined}, there is a Conway sphere $C$ containing on one side a single crossing $c$ and on the other a tangle such that the double branched cover of its exterior is homeomorphic to the complement of a knot $K'$ in $S^3$ such that $S_{p/q}^3(K')\cong \Sigma(L) \cong Y$ and with $\Delta_K(t) = \Delta_{K'}(t)$. Thus we need only to check that $K'$ is a torus knot or a cable of a torus knot.

The crossing $c$ contains lies in some twist region $R$ of $D$. For any $n>1$, let $D_n$ be the alternating diagram obtained by replacing $c$ with a twist region containing $n$ crossings, where we insert the crossings so that the twist region $R$ is extended in length. Since, we started with a diagram of the form shown in Figure~\ref{fig:Monty_diag} and extended the length of a twist region, we see that $D_n$ is still in the form given in Figure~\ref{fig:Monty_diag}. Thus $D_n$ is a Montesinos knot or link and its double branched cover $\Sigma(D_n)$ is a Seifert fibered space \cite[Chapter~12]{BurdeZieschang}. Since we obtained $D_n$ by replacing the crossing $c$ with a rational tangle, the Montesinos trick shows that there is a rational number $p_n/q_n \in \Q$ such that $S_{p_n/q_n}^3(K')\cong \Sigma(D_n)$ \cite{montesinos1975surgery}. Since the crossing numbers of the $D_n$ is monotonically increasing, we see that $D_n$ are diagrams for distinct knots or links. As there are only finitely many non-split alternating knots or links with a given determinant, we see that $\det D_n$ and hence $|p_n|$ tends to infinity. By \cite[Theorem~1.1]{mccoy2014bounds} any such $p_n/q_n$ satisfies $|p_n/q_n| \leq 4g(K')+3$. Thus for $n$ sufficiently large we have $q_n \geq 9$. Thus Proposition~\ref{prop:q>9_case} applies to show that $K'$ is either a torus knot or a cable of a torus knot, as required. 
\end{proof}

\begin{figure}[!ht]
  \centerline{
    \begin{overpic}[width=0.7\textwidth]{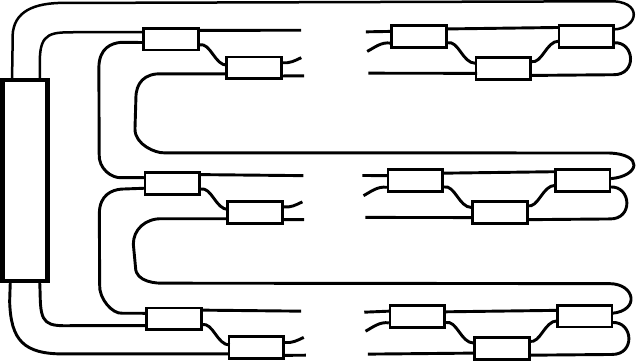}
      \put (-13,30) {$e-3$}
      \put (-15,27) {crossings}
      \put (49,4) {\Large $\dots$}
      \put (49,25) {\Large $\dots$}
      \put (49,48) {\Large $\dots$}
    \end{overpic}
  }
 \caption{A diagram for a three armed Montesinos link. The boxes represent twist regions.}
\label{fig:Monty_diag}
\end{figure}

\section{Proofs of Theorem~\ref{thm:nonint} and Proposition~\ref{prop:e_calc}}\label{sec:proof_of_main}
Thus we are ready to prove Theorem~\ref{thm:nonint} and conclude the paper.
\thmnonint*
\begin{proof}[Proof of Theorem~\ref{thm:nonint}]
We have two cases to consider either $e(Y)=-2$ or $|e(Y)|\geq 3$. First suppose that $e(Y)=-2$. Since 
\[
e(-Y)=2 \quad\text{and}\quad -Y\cong S^3_{-p/q}(\overline{K}),
\]
where $\overline{K}$ is knot obtained by reflecting $K$, Lemma~\ref{lem:CM_thm_plumbings} shows that the intersection form of the canonical plumbing bounding $-Y$ is isomorphic to a $p/q$-changemaker lattice whose stable coefficients compute the Alexander polynomial of $\overline{K}$. Lemma~\ref{lem:lattices_to_surgery} then shows that the existence of this changemaker lattice allows us to construct a knot $\overline{K'}$ which is a torus knot or a cable of a torus knot with $\Delta_{\overline{K'}}(t)=\Delta_{\overline{K}}(t)$ and $S_{-p/q}^3(\overline{K})\cong S_{-p/q}^3(\overline{K'})$. Reflecting yields a knot $K'$ with the desired properties.  After reflecting suitably, the case that $|e(Y)|\geq 3$ is precisely given by Lemma~\ref{lem:e>=3_case}.
\end{proof}

We now turn to the proof of Proposition~\ref{prop:e_calc}. First note that the Seifert invariants of surgeries on a torus knot can easily be calculated directly (see, for example, \cite{Moser71elementary} or  \cite[Lemma~4.4]{Owens12negdef}).
\begin{prop}\label{prop:torus_surg}
For coprime $r,s>1$, let $Y\cong S^3_{p/q}(T_{r,s})$. Then
\begin{enumerate}[(i)]
\item $Y$ is reducible if $p/q=rs$;
\item $Y$ is a lens space if $p/q = rs \pm \frac{1}{q}$;
\item otherwise $Y$ is the small Seifert fibered space with three exceptional fibers
 \[
Y\cong S^2\left(1;\frac{r}{s'},\frac{s}{r'},\frac{p}{q}-rs \right)
\]
where $s'$ and $r'$ are the integers satisfying $1\leq s'<r$, $1\leq r' <s$ and $\frac{s'}{r}+\frac{r'}{s}=1+\frac{1}{rs}$ .
\end{enumerate}
\end{prop}
The corresponding result for negative torus knots can be obtained by changing orientations, since $S_{p/q}^3(T_{r,s})\cong -S_{-p/q}^3(T_{-r,s})$. Next we calculate $e(Y)$ for these surgeries.

\begin{lemma}\label{lem:e(Y)_calc}
Let $Y\cong S_{p/q}^3(T_{r,s})$ be a small Seifert fibered space, where $r,s>1$. Then $e(Y)$ satisfies
\begin{enumerate}[(i)]
\item $e(Y)=-1$ if $p/q<0$;
\item $e(Y)=2$ if $0<p/q<rs-1$;
\item $e(Y)\geq 3$ if $rs-1<p/q<rs$ and
\item $e(Y)\leq -2$ if $p/q>rs$.
\end{enumerate}
\end{lemma}
\begin{proof}
By Proposition~\ref{prop:torus_surg}, we have that
\begin{equation}\label{eq:pos_form}
Y\cong S^2\left(1;\frac{r}{s'},\frac{s}{r'},\frac{p}{q}-rs\right).
\end{equation}
This shows that
\[
\varepsilon(S_{p/q}^3(T_{r,s}))= \frac{1}{rs}\left(\frac{p/q}{rs- p/q}\right),
\]
which implies that $\varepsilon(Y)>0$ if $0<p/q<rs$ and $\varepsilon(Y)<0$ if $p/q<0$ or $p/q>rs$.

First suppose that $0<p/q<rs$, so that $\varepsilon(Y)>0$. We apply Rolfsen twists to \eqref{eq:pos_form} to show that $Y$ takes the form 
\[
Y\cong S^2\left(1+n;\frac{r}{s'},\frac{s}{r'},\frac{p-rsq}{q+n(p-rsq)}\right)
\]
where $n$ is such that $\frac{p-rsq}{q+n(p-rsq)}>1$. One can check that the necessary value of $n$ is $n=\lceil \frac{q}{rsq-p} \rceil$. Thus we have that $e(Y)=2$ if $0<p/q<rs-1$ and $e(Y)\geq 3$ if $rs-1<p/q<rs$, as required.

By Rolfsen twisting twice we see that $Y$ can be written in the form
\begin{equation}\label{eq:neg_form}
Y\cong S^3_{p/q}(T_{r,s})\cong S^2\left(-1;-\frac{r}{r-s'},-\frac{s}{s-r'},\frac{p}{q}-rs\right)
\end{equation}
If $p/q<0$, then $\varepsilon(Y)<0$ and we have that $\frac{p}{q}-rs<-1$. Thus the description in \eqref{eq:neg_form} shows that $e(Y)=-1$ in this case. If $p/q>rs$, then $\varepsilon(Y)<0$, but $\frac{p}{q}-rs>0$. Thus by Rolfsen twisting we see that $Y$ takes the form
\[
Y\cong S^2\left(-1-n;-\frac{r}{r-s'},-\frac{s}{s-r'},\frac{p-qrs}{q-n(p-qrs)}\right)
\]
where $n\geq 1$ is chosen to ensure that $\frac{p-qrs}{q-n(p-qrs)}<-1$. This shows that $e(Y)\leq -2$ in this case.
\end{proof}

This allows us to determine the surgeries arising in the conclusion of Theorem~\ref{thm:nonint}.

\ecalc*
\begin{proof}
If $K$ is a torus knot, then this is a consequence of Lemma~\ref{lem:e(Y)_calc} and Proposition~\ref{prop:torus_surg}. The result is deduced for cables of torus knots by using the fact that Seifert fibered surgeries on $C_{a,b}\circ T_{r,s}$ take the form
\[
S^3_{ab \pm \frac{1}{q}}(C_{a,b}\circ T_{r,s})\cong S^3_{\frac{qab\pm 1}{qa^2}}(T_{r,s}),
\]
where $a$ is the winding number of the pattern torus knot (cf. \cite[Lemma 3.3]{Gordon83Satellite}). The condition that $b/a> rs-1$ is a consequence of the fact that $\frac{qab\pm 1}{qa^2}> rs-1$ if and only if $b/a > rs-1$.
\end{proof}
\phantomsection
\bibliography{qabib}

\begin{thebibliography}{CGLS87}

\bibitem[Bro61]{brown1961note}
J.~L. Brown, Jr.
\newblock Note on complete sequences of integers.
\newblock {\em Amer. Math. Monthly}, 68:557--560, 1961.

\bibitem[BZ94]{Boyer94exceptional}
S.~Boyer and X.~Zhang.
\newblock Exceptional surgery on knots.
\newblock {\em Bull. Amer. Math. Soc. (N.S.)}, 31(2):197--203, 1994.

\bibitem[BZ03]{BurdeZieschang}
Gerhard Burde and Heiner Zieschang.
\newblock {\em Knots}, volume~5 of {\em de Gruyter Studies in Mathematics}.
\newblock Walter de Gruyter \& Co., Berlin, second edition, 2003.

\bibitem[CGLS87]{cglscyclic}
Marc Culler, C.~McA. Gordon, J.~Luecke, and Peter~B. Shalen.
\newblock Dehn surgery on knots.
\newblock {\em Ann. of Math.}, 125(2):237--300, 1987.

\bibitem[Gab89]{gabai89solidtori}
David Gabai.
\newblock Surgery on knots in solid tori.
\newblock {\em Topology}, 28(1):1--6, 1989.

\bibitem[Gab90]{gabai90onebridge}
David Gabai.
\newblock {$1$}-bridge braids in solid tori.
\newblock {\em Topology Appl.}, 37(3):221--235, 1990.

\bibitem[Gib15]{gibbons2013deficiency}
Julian Gibbons.
\newblock Deficiency symmetries of surgeries in {$S^3$}.
\newblock {\em Int. Math. Res. Not. IMRN}, (22):12126--12151, 2015.

\bibitem[GL04]{Gordon04toroidal}
C.~McA. Gordon and John Luecke.
\newblock Non-integral toroidal {D}ehn surgeries.
\newblock {\em Comm. Anal. Geom.}, 12(1-2):417--485, 2004.

\bibitem[Gor83]{Gordon83Satellite}
C.~McA. Gordon.
\newblock Dehn surgery and satellite knots.
\newblock {\em Trans. Amer. Math. Soc.}, 275(2):687--708, 1983.

\bibitem[Gor98]{gordon1998dehn}
Cameron Gordon.
\newblock Dehn filling: a survey.
\newblock {\em Banach Center Publications}, 42:129--144, 1998.

\bibitem[Gre13]{GreeneLRP}
Joshua Greene.
\newblock The lens space realization problem.
\newblock {\em Ann. of Math. (2)}, 177(2):449--511, 2013.

\bibitem[Gre14]{Greene3Braid}
Joshua~Evan Greene.
\newblock Donaldson's theorem, {H}eegaard {F}loer homology, and knots with
  unknotting number one.
\newblock {\em Adv. Math.}, 255:672--705, 2014.

\bibitem[Gre15]{greene2010space}
Joshua~Evan Greene.
\newblock L-space surgeries, genus bounds, and the cabling conjecture.
\newblock {\em J. Differential Geom.}, 100(3):491--506, 2015.

\bibitem[GS99]{GompfStipsicz}
Robert~E. Gompf and Andr\'{a}s~I. Stipsicz.
\newblock {\em {$4$}-manifolds and {K}irby calculus}, volume~20 of {\em
  Graduate Studies in Mathematics}.
\newblock American Mathematical Society, Providence, RI, 1999.

\bibitem[Iss18]{issa2017quasialternating}
Ahmad Issa.
\newblock The classification of quasi-alternating {M}ontesinos links.
\newblock {\em Proc. Amer. Math. Soc.}, 146(9):4047--4057, 2018.

\bibitem[KL04]{Kauffman2004rational}
Louis~H. Kauffman and Sofia Lambropoulou.
\newblock On the classification of rational tangles.
\newblock {\em Adv. in Appl. Math.}, 33(2):199--237, 2004.

\bibitem[Lic62]{Lickorish62dehnsurgery}
W.~B.~R. Lickorish.
\newblock A representation of orientable combinatorial {$3$}-manifolds.
\newblock {\em Ann. of Math. (2)}, 76:531--540, 1962.

\bibitem[LM13]{Lackenby-meyerhoff-13}
Marc Lackenby and Robert Meyerhoff.
\newblock The maximal number of exceptional {D}ehn surgeries.
\newblock {\em Invent. Math.}, 191(2):341--382, 2013.

\bibitem[McC15]{mccoy2014noninteger}
Duncan McCoy.
\newblock Non-integer surgery and branched double covers of alternating knots.
\newblock {\em J. Lond. Math. Soc. (2)}, 92(2):311--337, 2015.

\bibitem[McC16]{mccoy2016thesis}
Duncan McCoy.
\newblock Alternating surgeries.
\newblock {\em Thesis}, 2016.

\bibitem[McC17]{mccoy2014bounds}
Duncan McCoy.
\newblock Bounds on alternating surgery slopes.
\newblock {\em Algebr. Geom. Topol.}, 17(5):2603--2634, 2017.

\bibitem[Mon75]{montesinos1975surgery}
Jos\'{e}~M Montesinos.
\newblock Surgery on links and double branched covers of ${S}^3$.
\newblock {\em Knots, groups, and 3-manifolds (Papers dedicated to the memory
  of R. H. Fox)}, pages 227--259, 1975.

\bibitem[Mos71]{Moser71elementary}
Louise Moser.
\newblock Elementary surgery along a torus knot.
\newblock {\em Pacific J. Math.}, 38:737--745, 1971.

\bibitem[MT93]{Menasco93classification}
William Menasco and Morwen Thistlethwaite.
\newblock The classification of alternating links.
\newblock {\em Ann. of Math. (2)}, 138(1):113--171, 1993.

\bibitem[NR78]{MR518415}
Walter~D. Neumann and Frank Raymond.
\newblock Seifert manifolds, plumbing, {$\mu $}-invariant and orientation
  reversing maps.
\newblock In {\em Algebraic and geometric topology ({P}roc. {S}ympos., {U}niv.
  {C}alifornia, {S}anta {B}arbara, {C}alif., 1977)}, volume 664 of {\em Lecture
  Notes in Math.}, pages 163--196. Springer, Berlin, 1978.

\bibitem[Orl72]{Orlik}
Peter Orlik.
\newblock {\em Seifert manifolds}.
\newblock Lecture Notes in Mathematics, Vol. 291. Springer-Verlag, Berlin-New
  York, 1972.

\bibitem[OS03a]{Ozsvath03Absolutely}
Peter Ozsv{\'a}th and Zolt{\'a}n Szab{\'o}.
\newblock Absolutely graded {F}loer homologies and intersection forms for
  four-manifolds with boundary.
\newblock {\em Adv. Math.}, 173(2):179--261, 2003.

\bibitem[OS03b]{Ozsvath03plumbed}
Peter Ozsv{\'a}th and Zolt{\'a}n Szab{\'o}.
\newblock On the {F}loer homology of plumbed three-manifolds.
\newblock {\em Geom. Topol.}, 7:185--224 (electronic), 2003.

\bibitem[OS05]{Ozsvath05Lensspace}
Peter Ozsv{\'a}th and Zolt{\'a}n Szab{\'o}.
\newblock On knot {F}loer homology and lens space surgeries.
\newblock {\em Topology}, 44(6):1281--1300, 2005.

\bibitem[OS12]{Owens12negdef}
Brendan Owens and Sa{\v{s}}o Strle.
\newblock Dehn surgeries and negative-definite four-manifolds.
\newblock {\em Selecta Math. (N.S.)}, 18(4):839--854, 2012.

\bibitem[Ril79]{Riley79hyperbolic}
Robert Riley.
\newblock An elliptical path from parabolic representations to hyperbolic
  structures.
\newblock In {\em Topology of low-dimensional manifolds ({P}roc. {S}econd
  {S}ussex {C}onf., {C}helwood {G}ate, 1977)}, volume 722 of {\em Lecture Notes
  in Math.}, pages 99--133. Springer, Berlin, 1979.

\bibitem[Thu82]{Thurston823DKleinanGroups}
William~P. Thurston.
\newblock Three-dimensional manifolds, {K}leinian groups and hyperbolic
  geometry.
\newblock {\em Bull. Amer. Math. Soc. (N.S.)}, 6(3):357--381, 1982.

\bibitem[Wal60]{wallace60modifications}
Andrew~H. Wallace.
\newblock Modifications and cobounding manifolds.
\newblock {\em Canad. J. Math.}, 12:503--528, 1960.

\end{thebibliography}
\bibliographystyle{alpha}

\end{document}